\newtheorem{thm}{Theorem}
\newtheorem{lem}[thm]{Lemma}
\newtheorem{cor}[thm]{Corollary}
\theoremstyle{definition}
\xpatchcmd{\proof}{\itshape}{\normalfont\proofnameformat}{}{}
\newcommand{\proofnameformat}{}
\begin{document}

\renewcommand{\proofnameformat}{\bfseries}

\begin{center}
{\Large\textbf{Random walks and quadratic number fields}}

\vspace{10mm}

\textbf{Bence Borda}

{\footnotesize

University of Sussex

Brighton, BN1 9RH, United Kingdom

Email: \texttt{b.borda@sussex.ac.uk}}

\vspace{5mm}

{\footnotesize \textbf{Keywords:} Wasserstein metric, $L^2$ discrepancy, quadratic irrational, zeta function, Diophantine approximation}

{\footnotesize \textbf{Mathematics Subject Classification (2020):} 60G50, 11K38, 11E45, 11R42}
\end{center}

\vspace{5mm}

\begin{abstract}
We establish a novel type of connection between random walks and analytic number theory. Working with a random walk on the circle group $\mathbb{R}/\mathbb{Z}$ in which each step is a random integer multiple of a given quadratic irrational $\alpha$, we show that the rate of convergence to uniformity in the quadratic Wasserstein metric (also known as the periodic $L^2$ discrepancy) is governed by deep arithmetic invariants of the ring of algebraic integers of the real quadratic field $\mathbb{Q}(\alpha)$, such as fundamental units and special values of zeta functions.
\end{abstract}

\section{Introduction}

The behavior of a random walk on the circle group is governed by a delicate interplay between classical probability, Fourier analysis and Diophantine approximation. The main goal of this paper is to establish a further, rather surprising connection to analytic number theory. As a prototypical example, we work with a random walk in which each step is a random integer multiple of a given quadratic irrational $\alpha$. We show that the rate of convergence of such a random walk to uniformity is inextricably linked to the arithmetic of the real quadratic field $\mathbb{Q}(\alpha)$. In particular, we demonstrate that the random walk is able to detect deep arithmetic invariants such as fundamental units in the ring of algebraic integers, and special values of zeta functions.

To state our main result, let $X_1, X_2, \ldots$ be independent, identically distributed (i.i.d.) integer-valued random variables, and set $S_n = \sum_{k=1}^n X_k$. Given $\alpha \in \mathbb{R}$, the sequence $S_n \alpha \pmod{\mathbb{Z}}$ is then a random walk on the circle group $\mathbb{T}=\mathbb{R}/\mathbb{Z}$. The empirical measure $\mu_N = N^{-1} \sum_{n=1}^N \delta_{S_n \alpha \pmod{\mathbb{Z}}}$ converges to the normalized Haar measure $\lambda$ on $\mathbb{T}$ under very general assumptions. Our main result is the precise asymptotics of the expected value and the variance of the distance squared from the empirical measure to uniformity in the quadratic Wasserstein metric $W_{\mathbb{T},2}$ on $\mathbb{T}$. See Section \ref{contextsection} for definitions, notation and further context.
\begin{thm}\label{maintheorem} Assume $\mathbb{E} (X_1) =0$ and $0<\mathbb{E} (X_1^2) < \infty$, and $\alpha$ is a quadratic irrational. Then
\[ \mathbb{E} \left( W_{\mathbb{T},2}^2 (\mu_N, \lambda) \right) \sim \frac{L^2 c_1(L \alpha)}{\sigma^2} \cdot \frac{\log N}{N} \quad \textrm{and} \quad \mathrm{Var} \left( W_{\mathbb{T},2}^2 (\mu_N, \lambda) \right) \sim \frac{L^4 c_2(L \alpha)}{\sigma^4} \cdot \frac{\log N}{N^2} \]
with $L = \mathrm{gcd} (\mathrm{supp} (X_1))$, $\sigma^2 = \mathbb{E} (X_1^2)$ and explicit positive constants $c_1(L \alpha)$, $c_2(L \alpha)$. If in addition $\mathbb{E} (|X_1|^p) <\infty$ with some real constant $p>2$, then
\[ \begin{split} \mathbb{E} \left( W_{\mathbb{T},2}^2 (\mu_N, \lambda) \right) &= \frac{L^2 c_1(L \alpha)}{\sigma^2} \cdot \frac{\log N}{N} + O \left( \frac{1}{N} \right) , \\ \mathrm{Var} \left( W_{\mathbb{T},2}^2 (\mu_N, \lambda) \right) &= \frac{L^4 c_2(L \alpha)}{\sigma^4} \cdot \frac{\log N}{N^2} + O \left( \frac{(\log \log N)^2}{N^2} \right) \end{split} \]
with implied constants depending only on $\alpha$ and the distribution of $X_1$.
\end{thm}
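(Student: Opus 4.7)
The natural starting point is the Parseval-type identity
\[ W_{\mathbb{T},2}^2(\mu,\lambda) = \sum_{k \neq 0} \frac{|\widehat{\mu}(k)|^2}{4 \pi^2 k^2} \]
for probability measures on the circle, which turns the problem into one about Fourier coefficients $\widehat{\mu}_N(k) = N^{-1}\sum_{n=1}^N e^{2\pi i k \alpha S_n}$. Since $X_1$ is supported in $L\mathbb{Z}$, writing $X_j = L Y_j$ produces an integer-valued walk $T_n = \sum_{j \le n} Y_j$ with $\mathrm{gcd}(\mathrm{supp}(Y_1)) = 1$, $\mathbb{E} Y_1 = 0$, $\mathbb{E} Y_1^2 = \sigma^2/L^2$, and replaces $k \alpha S_n$ by $k \beta T_n$ with $\beta := L \alpha$. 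This change of variables is responsible for the powers of $L$ in the main terms and for the fact that $c_1, c_2$ depend only on $L \alpha$.

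Taking expectation and exploiting independence,
\[ \mathbb{E} |\widehat{\mu}_N(k)|^2 = \frac{1}{N^2}\left( N + 2\, \mathrm{Re} \sum_{d=1}^{N-1} (N-d) \varphi(\beta k)^d \right), \]
where $\varphi(t) = \mathbb{E}[e^{2\pi i t Y_1}]$ satisfies $1 - \varphi(t) \sim 2\pi^2 (\sigma^2/L^2) \|t\|^2$ as the distance $\|t\|$ from $t$ to the nearest integer tends to zero, while $|\varphi(t)|<1$ for $t \notin \mathbb{Z}$. Summing the geometric series shows that, up to lower-order corrections, $\mathbb{E}|\widehat{\mu}_N(k)|^2 \approx L^2/(\pi^2 \sigma^2 N \|\beta k\|^2)$ outside a transition region $N\|\beta k\|^2 = O(1)$, where instead the contribution is only $O(1)$. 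Substituting into the Parseval sum, the main contribution to $\mathbb{E} W_{\mathbb{T},2}^2(\mu_N,\lambda)$ becomes
\[ \frac{L^2}{2 \pi^4 \sigma^2 N}\sum_{1 \le k \le K} \frac{1}{k^2 \|\beta k\|^2} \]
for an appropriate cutoff $K$. The arithmetic heart of the proof is the asymptotic evaluation of this Diophantine sum. For a quadratic irrational $\beta$, Lagrange's theorem gives an eventually periodic continued fraction whose convergent denominators grow geometrically at a rate set by the fundamental unit of $\mathcal{O}_{\mathbb{Q}(\alpha)}$; a block decomposition along successive convergents yields $\sum_{k=1}^K 1/(k^2 \|\beta k\|^2) = c(\beta)\log K + O(1)$, where $c(\beta)$ is an explicit positive constant that can be identified with a special value of a partial or ray-class zeta function attached to $\mathbb{Q}(\alpha)$. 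Setting $c_1(L \alpha) = c(L\alpha)/(2\pi^4)$ delivers the expectation asymptotic. The $O(1/N)$ error under $\mathbb{E}|X_1|^p<\infty$ follows from a sharper local expansion of $\varphi$ near the origin afforded by the extra moment, together with a tail bound for the arithmetic sum past $K$.

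For the variance, one expands $W_{\mathbb{T},2}^4(\mu_N,\lambda)$ as a fourfold Fourier sum and reduces its expectation to covariances $\mathrm{Cov}(|\widehat{\mu}_N(k_1)|^2, |\widehat{\mu}_N(k_2)|^2)$, which involve joint characteristic functions evaluated at $\beta k_1 \pm \beta k_2$. After cancellation against $(\mathbb{E}W_{\mathbb{T},2}^2)^2$, the dominant surviving contribution comes from near-resonances where $\|\beta(k_1 \pm k_2)\|$ is small; these are again controlled by the Diophantine structure of $\beta$, and an analogous block/convergent argument produces a $(\log N)/N^2$ main term with a second canonical zeta-theoretic constant $c_2(L\alpha)$. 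The two main obstacles I anticipate are (i) obtaining estimates on $\mathbb{E}|\widehat{\mu}_N(k)|^2$ that are uniform across the transition region $N\|\beta k\|^2 \asymp 1$ and sharp enough to yield the error terms $O(1/N)$ and $O((\log \log N)^2/N^2)$, and (ii) identifying $c_1(L\alpha), c_2(L\alpha)$ with canonical special values of zeta functions of $\mathbb{Q}(\alpha)$ rather than with some implicit Diophantine sum — this is where the arithmetic depth of the theorem lives, and is likely the hardest step.
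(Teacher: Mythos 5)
Your high-level architecture matches the paper's: Parseval identity, reduction to $L=1$, closed-form evaluation of $\mathbb{E}|\widehat{\mu}_N(k)|^2$ via the geometric series, replacement of $(1-|\varphi|^2)/|1-\varphi|^2$ by $L^2/(\pi^2\sigma^2\|Lk\alpha\|^2)$, and reduction to the Diophantine sum $\sum_{k\le K}1/(k^2\|\beta k\|^2)$. You also correctly identify the asymptotic $\sum_{k\le K}1/(k^\theta\|\beta k\|^\theta)=c(\beta,\theta)\log K+O(1)$ as the arithmetic heart. However, your route to it differs from the paper's: you propose a block decomposition along convergents of the continued fraction (essentially Beck's method, which the paper cites as handling only the case $\theta=2$ with class number one), whereas the paper factors $m^{-\theta}|m\alpha-\ell|^{-\theta}\approx D^{\theta/2}|Q(\ell,m)|^{-\theta}$ with $Q$ the minimal-polynomial quadratic form, counts lattice points in level sets $Q^{-1}(n)$ modulo $\mathrm{Aut}(Q)$, and obtains $c(\alpha,\theta)=(D^{\theta/2}/\log\varepsilon)\sum_{n\ne0}R_Q(n)/|n|^{\theta}$ with $R_Q(n)$ the number of primary representations. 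It is this quadratic-form formulation that makes Theorem~\ref{c1c2theorem} (the identification of $c_1,c_2$ with $\zeta(A,-1)$ and $\zeta(A,-3)$ via the correspondence between form classes and $\mathbb{Z}$-module classes and Kramer's functional equation) essentially automatic; it is far from clear that a continued-fraction block argument would produce the same identification.

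There is a genuine gap in your treatment of the variance. You assert that, after cancellation against $(\mathbb{E}W_{\mathbb{T},2}^2)^2$, ``the dominant surviving contribution comes from near-resonances where $\|\beta(k_1\pm k_2)\|$ is small.'' This is backwards. Writing $\mathrm{Var}(W_N)=U+V$ with $U$ the diagonal part ($k_1=k_2$) and $V$ the off-diagonal covariances, it is $U$ that produces the main term $\sim(\log N)/N^2$, and extracting it requires a sharp estimate of the \emph{fourth} moment $\mathbb{E}\bigl|\sum_{n\le N}e^{2\pi i k S_n\alpha}\bigr|^4$ for a single frequency $k$; the paper proves an improved version of \cite[Prop.\ 2.2]{BB3} (Lemma~\ref{momentlemma}) precisely for this. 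The off-diagonal near-resonance terms you focus on are the \emph{error} term, bounded by $O((\log\log N)^2/N^2)$ via a delicate case analysis of $\|m_1\alpha\|,\|m_2\alpha\|,\|(m_1\pm m_2)\alpha\|$ (Lemma~\ref{diophantinelemma2}). Without the fourth-moment lemma your argument cannot produce the variance main term at all, and without the Diophantine lemma for the cross terms you cannot control the error at the stated $O((\log\log N)^2/N^2)$ level. You should also note that the $O(1/N)$ error for the expectation under $\mathbb{E}|X_1|^p<\infty$ is obtained by a local expansion $\varphi(x)=1-(\sigma^2/2)x^2+O(|x|^p)$ and then summing $\sum_m m^{-2}\|mL\alpha\|^{-(4-p)}=O(1)$ using the badly approximable property; you gesture at this but give no indication of why the resulting error sum is bounded.
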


In the terminology of probability theory, the sequence $S_n \alpha \pmod{\mathbb{Z}}$ is a Markov chain on $\mathbb{T}$ with stationary measure $\lambda$, started from a nonstationary distribution, indeed from the Dirac measure at $0$. The most interesting feature of this sequence is that it is at the critical threshold of a phase transition between weakly and strongly dependent sequences, which explains why the rates of $\mathbb{E} (W_{\mathbb{T},2}^2 (\mu_N, \lambda))$ and $\mathrm{Var} (W_{\mathbb{T},2}^2 (\mu_N, \lambda))$ differ from uniformly distributed i.i.d.\ samples by a factor of $\log N$. The phase transition is discussed in more detail in Section \ref{latticesection}.

The connection to number theory comes from the values of the constants $c_1$ and $c_2$ in Theorem \ref{maintheorem}. For instance, their values at the golden ratio are
\[ c_1 \left( \frac{1+\sqrt{5}}{2} \right) = \frac{\sqrt{5}}{150 \log \frac{1+\sqrt{5}}{2}} \quad \textrm{and} \quad c_2 \left( \frac{1+\sqrt{5}}{2} \right) = \frac{\sqrt{5}}{6750 \log \frac{1+\sqrt{5}}{2}} . \]
Several more examples are listed in Table \ref{c1c2table}. Finding the explicit values of $c_1 (\alpha)$ and $c_2(\alpha)$ is far from easy, and relies on the theory of binary quadratic forms due to Gauss, as well as arithmetic properties of rings of algebraic integers of real quadratic fields.

A real number $\alpha$ is called a quadratic irrational if it is the root of an irreducible polynomial of degree 2 with integer coefficients. There is a unique minimal polynomial $ax^2+bx+c$ with $a,b,c \in \mathbb{Z}$, $\mathrm{gcd}(a,b,c)=1$ and $a>0$. Then the discriminant $D=b^2-4ac>0$ is not a perfect square, and $\alpha$ is an element of the real quadratic field $\mathbb{Q}(\sqrt{D})$. Rather surprisingly, it turns out that the constants $c_1(\alpha)$ and $c_2(\alpha)$ can be expressed in terms of special values of zeta functions $\zeta(A,s)$ of $\mathbb{Z}$-modules $A$ in $\mathbb{Q}(\sqrt{D})$. We refer to Section \ref{quadraticsection} for definitions and further context.
\begin{thm}\label{c1c2theorem} Let $\alpha$ be a quadratic irrational, and let $D$ be the discriminant of its minimal polynomial. Let $A=\mathbb{Z} + \mathbb{Z}\alpha$, and let $\varepsilon$ be the smallest totally positive unit in the order $\{ \xi \in \mathbb{Q}(\sqrt{D}) : \xi A \subseteq A \}$ such that $\varepsilon>1$. Then
\[ c_1(\alpha) = \frac{2\zeta(A,-1)}{D^{1/2} \log \varepsilon} \quad \textrm{and} \quad c_2(\alpha) = \frac{4 \zeta(A,-3)}{9D^{3/2} \log \varepsilon} . \]
\end{thm}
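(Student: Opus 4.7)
The proof has two halves. The first distills the analysis underlying Theorem \ref{maintheorem} into explicit formulas for $c_1(\alpha)$ and $c_2(\alpha)$. The starting point is the Parseval identity
\[ W_{\mathbb{T},2}^2(\mu_N,\lambda) = \sum_{k\neq 0}\frac{|\widehat{\mu}_N(k)|^2}{(2\pi k)^2},\qquad \widehat{\mu}_N(k) = \frac{1}{N}\sum_{n=1}^N e^{-2\pi i k S_n\alpha}. \]
Independence of the $X_j$ reduces $\mathbb{E}|\widehat{\mu}_N(k)|^2$ to a geometric sum in the characteristic function $\phi(\theta)=\mathbb{E}e^{2\pi i\theta X_1}$, and the local expansion $|\phi(\theta)|^2 = 1-4\pi^2\sigma^2\|\theta\|^2+O(\|\theta\|^3)$ near the integers produces a Fej\'er-type kernel of effective width $\|k\alpha\|\lesssim 1/\sqrt N$. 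A scale-by-scale analysis should then give
\[ c_j(\alpha) = \frac{K_j}{\log\varepsilon}\,\zeta(A,2j),\qquad j=1,2, \]
with $K_1 = D/(2\pi^4)$ and $K_2 = D^2/(4\pi^8)$.

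To bridge the integer sum to the module sum, let $m_k\in\mathbb{Z}$ be the integer closest to $k\alpha$ and set $\xi_k := k\alpha-m_k\in A$, so that $|\xi_k|=\|k\alpha\|$. With $\alpha'$ the Galois conjugate of $\alpha$ and $a$ the leading coefficient of its minimal polynomial, $|\alpha-\alpha'|=\sqrt D/|a|$ and consequently
\[ |N(\xi_k)| = |\xi_k|\cdot|\xi_k+k(\alpha-\alpha')| \sim \frac{\sqrt D}{|a|}\,k\|k\alpha\|. \]
The totally positive units $\langle\varepsilon\rangle$ of the order $\mathcal O=\{\xi\in\mathbb{Q}(\sqrt D):\xi A\subseteq A\}$ act on $A$ preserving $|N(\cdot)|$ while scaling $|\xi|$ by $\varepsilon^{\pm 1}$. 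Since $\log|\xi_k|$ equidistributes modulo $\log\varepsilon$ as $k$ ranges over $\mathbb{Z}_{>0}$ (a consequence of the periodicity of the continued fraction of $\alpha$), the truncation $1\le k\le N$ captures $\log N/\log\varepsilon$ copies of a fundamental domain for this action as $N\to\infty$, producing the factor $(\log\varepsilon)^{-1}$ and identifying the residual series as $\zeta(A,2j) = \sum_{\xi\in(A\setminus\{0\})/\langle\varepsilon\rangle}|N(\xi)|^{-2j}$.

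The second half of the proof is an invocation of the Hecke functional equation
\[ |D|^{s/2}\pi^{-s}\Gamma(s/2)^2\,\zeta(A,s) = |D|^{(1-s)/2}\pi^{-(1-s)}\Gamma((1-s)/2)^2\,\zeta(A,1-s). \]
At $s=2$ this reads $\zeta(A,2) = 4\pi^4 D^{-3/2}\zeta(A,-1)$ (using $\Gamma(-1/2)^2 = 4\pi$), and at $s=4$ it reads $\zeta(A,4) = (16\pi^8/9)\,D^{-7/2}\zeta(A,-3)$ (using $\Gamma(-3/2)^2 = 16\pi/9$). Substituting these into the formulas for $c_j(\alpha)$ from the first half produces
\[ c_1(\alpha) = \frac{2\zeta(A,-1)}{D^{1/2}\log\varepsilon},\qquad c_2(\alpha) = \frac{4\zeta(A,-3)}{9D^{3/2}\log\varepsilon}, \]
as claimed.

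The principal obstacle will be the scale-by-scale summation in the first half: the passage from $\sum_{k\le N}(k\|k\alpha\|)^{-2j}$ to $(\log N/\log\varepsilon)\,\zeta(A,2j)$ with a vanishing error requires a quantitative form of equidistribution of $\log|\xi_k|\pmod{\log\varepsilon}$, together with an exact treatment of the Fej\'er kernel in the regime $\|k\alpha\|\lesssim 1/\sqrt N$, where the first-order approximation $|N(\xi_k)|\sim\sqrt D\,k\|k\alpha\|/|a|$ loses precision. Both reduce to Ostrowski-type expansions in the periodic continued fraction of $\alpha$, with delicate bookkeeping of error terms. A secondary technical point is that when $A$ is a module over a non-maximal order of $\mathbb{Q}(\sqrt D)$, the functional equation has to be derived by decomposing $\zeta(A,s)$ as a combination of ray-class $L$-functions of the maximal order, each satisfying the standard Hecke functional equation.
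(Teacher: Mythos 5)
Your invocation of the functional equation is the crux, and it is where the argument breaks. The equation you write,
\[ |D|^{s/2}\pi^{-s}\Gamma(s/2)^2\,\zeta(A,s) = |D|^{(1-s)/2}\pi^{-(1-s)}\Gamma((1-s)/2)^2\,\zeta(A,1-s), \]
is the Hecke functional equation for the full Dedekind zeta function of a real quadratic field (a sum over \emph{all} narrow ideal classes). It is \emph{not} satisfied by the partial zeta function $\zeta(A,s)$ of a single class: the completed partial zeta is related to the partial zeta of the \emph{conjugate} class, and moreover both gamma factors $\Gamma(s/2)^2$ and $\Gamma((s+1)/2)^2$ enter because the narrow class has elements of both norm signs. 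The correct tool, which the paper uses, is Kramer's functional equation
\[ \zeta(A,k) + (-1)^k\zeta(wA,k) = \frac{2^{2k-1}\pi^{2k}}{D^{k-1/2}((k-1)!)^2}\,\zeta(A,1-k) \qquad (N(w)<0), \]
which at even $k$ produces exactly the combination $\zeta(A,k)+\zeta(wA,k)$. This combination is what actually appears: the Dirichlet series $\sum_n R_Q(n)/n^s$ equals $\zeta(A,s)$, and $\sum_n R_{-Q}(n)/n^s$ equals $\zeta(wA,s)$, so summing over both signs of $n$ gives $\zeta(A,s)+\zeta(wA,s)$, not $\zeta(A,s)$ alone. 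Your closing remark about decomposing into ray-class $L$-functions when the order is non-maximal does not address this; even for the maximal order, a single $\zeta(A,s)$ fails your equation unless the class is fixed by the conjugation-and-inversion involution. You arrive at the right numerical answer because your ad hoc definition $\zeta(A,s)=\sum_{\xi\in(A\setminus\{0\})/\langle\varepsilon\rangle}|N(\xi)|^{-s}$ already tacitly sums over all sign types (hence is proportional to $\zeta(A,s)+\zeta(wA,s)$ up to factors of $2$ and $N(A)^s$ you have not tracked), so the two misstatements roughly cancel — but as written the step is not a proof.

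The first half of your argument is also only a sketch, as you acknowledge. The paper makes this rigorous through a different route: Theorem \ref{quadraticsumtheorem} shows $\sum_{m\le M} (m\|m\alpha\|)^{-\theta} = c(\alpha,\theta)\log M + O(1)$ by writing $1/(m\|m\alpha\|)^\theta$ in terms of the binary quadratic form $Q(\ell,m)$, counting integer representations $Q(\ell,m)=n$ in the strip $C_1 m\le\ell\le C_2 m$ using the $\mathrm{Aut}(Q)$-action and primary representations, and then recognizing $\sum_n R_Q(n)/|n|^\theta$ as a zeta value via Zagier's module--form dictionary. Your norm-equidistribution heuristic ($\log|\xi_k|$ uniformly distributed mod $\log\varepsilon$) captures the right intuition, but you would need to make the error analysis quantitative; the paper sidesteps this by making the $\mathrm{Aut}(Q)$-orbit count exact up to $O(R_Q(n)\log|n|)$ and summing the error using $R_Q(n)\ll|n|^\delta$.
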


\begin{table}[t]
\centering
\begin{tabular}{|Sc||Sc|Sc|Sc|Sc|Sc|}
\hline
$\alpha$ & $\sqrt{2}$ & $\sqrt{3}$ & $\sqrt{5}$ & $\sqrt{6}$ & $\sqrt{7}$ \\
\hline
$c_1(\alpha)$ & $\frac{\sqrt{2}}{48 \log (1+\sqrt{2})}$ & $\frac{\sqrt{3}}{36 \log (2+\sqrt{3})}$ & $\frac{\sqrt{5}}{100 \log \frac{1+\sqrt{5}}{2}}$ & $\frac{\sqrt{6}}{24 \log (5+2\sqrt{6})}$ & $\frac{\sqrt{7}}{21 \log(8+3\sqrt{7})}$ \\
\hline
$c_2(\alpha)$ & $\frac{11\sqrt{2}}{17280 \log (1+\sqrt{2})}$ & $\frac{23\sqrt{3}}{19440 \log(2+\sqrt{3})}$ & $\frac{43\sqrt{5}}{54000 \log \frac{1+\sqrt{5}}{2}}$ & $\frac{29\sqrt{6}}{8640 \log (5+2\sqrt{6})}$ & $\frac{113\sqrt{7}}{26460 \log (8+3\sqrt{7})}$ \\
\hline
\end{tabular}

\vspace{5mm}

\begin{tabular}{|Sc||Sc|Sc|Sc|Sc|}
\hline
$\alpha$ & $\sqrt{10}$ & $\sqrt{11}$ & $\sqrt{13}$ & $\sqrt{14}$ \\
\hline
$c_1(\alpha)$ & $\frac{47\sqrt{10}}{1200 \log (3+\sqrt{10})}$ & $\frac{7\sqrt{11}}{132 \log (10+3\sqrt{11})}$ & $\frac{\sqrt{13}}{52 \log \frac{3+\sqrt{13}}{2}}$ &$\frac{5\sqrt{14}}{84 \log (15+4\sqrt{14})}$ \\
\hline
$c_2(\alpha)$ & $\frac{2897\sqrt{10}}{432000 \log (3+\sqrt{10})}$ & $\frac{2153\sqrt{11}}{261360 \log (10+3\sqrt{11})}$ & $\frac{1247\sqrt{13}}{365040 \log \frac{3+\sqrt{13}}{2}}$ & $\frac{2503\sqrt{14}}{211680 \log (15+4\sqrt{14})}$ \\
\hline
\end{tabular}
\caption{The values of $c_1(\alpha)$ and $c_2(\alpha)$ for some quadratic irrational $\alpha$}
\label{c1c2table}
\end{table}

Theorem \ref{c1c2theorem} provides a way to explicitly compute the constants $c_1(\alpha)$ and $c_2(\alpha)$ for any quadratic irrational $\alpha$. Indeed, $\varepsilon$ can be expressed in terms of the smallest positive solution of Pell's equation $x^2-Dy^2=4$, whereas the special values of $\zeta(A,s)$ at negative integers $s$ are rational, and were first found by Shintani \cite{SH}, and later by Zagier \cite{ZA2} using a backward continued fraction algorithm. As an immediate corollary, we see that if $\alpha \in \mathbb{Q}(\sqrt{d})$ with some squarefree integer $d>1$ and $\eta>1$ is the fundamental unit in the ring of algebraic integers, then $c_1(\alpha), c_2(\alpha) \in (\sqrt{d}/ \log \eta) \mathbb{Q}$. We will also show that $c_1$ and $c_2$, as functions defined on the set of quadratic irrationals, satisfy the functional equations
\begin{equation}\label{c1c2functionalequation}
c_j (\alpha) = c_j (\alpha +1) = c_j (-\alpha) = c_j (1/\alpha), \qquad j=1,2.
\end{equation}
In particular, $c_1(\alpha)$ and $c_2(\alpha)$ are invariant under shifts of both the regular and the backward continued fraction expansion of $\alpha$.

Quadratic irrationals play a special role in the ergodic theory of circle rotations as well, as shown by Beck \cite{BE} and Avila, Duryev, Dolgopyat and Sarig \cite{ADDS}. In particular, the same constant $c_1(\alpha)$ appears in probabilistic limit theorems for certain Birkhoff sums known as temporal limit theorems, in lattice point counting in polygons \cite{BE}, and in the $L^2$ discrepancy of 2-dimensional lattices \cite{BO}. A constant very similar to $c_1(\alpha)$ appears in \cite{ADDS}, where a completely different method is given for finding its explicit value using dynamical systems and Markov chains.

The main goal of the present paper is to extend the framework of \cite{ADDS,BE} from ergodic theory to random walks. In particular, Theorems \ref{maintheorem} and \ref{c1c2theorem} are the first results in the literature connecting random walks and the arithmetic of real quadratic fields, and also the first distributional results where special zeta values at $s=-3$ play a role. The choice of the Wasserstein metric in our results is for convenience, we expect other statistics of the empirical measure $\mu_N$ to be related to the arithmetic of $\mathbb{Q}(\alpha)$ as well.

The rest of the paper is organized as follows. Section \ref{contextsection} is an overview of the Wasserstein metric and random walks on $\mathbb{T}$ for further context. Section \ref{badlyapproxsection} deals with the probability theory part of the proof, using exponential sums and Diophantine approximation. We will show that the estimates
\[ \mathbb{E} \left( W_{\mathbb{T},2}^2 (\mu_N, \lambda) \right) \asymp \frac{\log N}{N} \quad \textrm{and} \quad \mathrm{Var} \left( W_{\mathbb{T},2}^2 (\mu_N, \lambda) \right) \asymp \frac{\log N}{N^2} \]
actually hold for any badly approximable $\alpha$, but in general they cannot be refined to precise asymptotic relations. In particular, in Theorem \ref{maintheorem} the condition that $\alpha$ is quadratic irrational cannot be relaxed to $\alpha$ being badly approximable.

In Section \ref{quadraticsection}, we finish the proof of Theorem \ref{maintheorem} and prove Theorem \ref{c1c2theorem} using the theory of binary quadratic forms and real quadratic fields. We discuss the algorithm of Zagier for finding the special values of $\zeta(A,s)$ at negative integers $s$, leading to the explicit values of $c_1(\alpha)$ and $c_2(\alpha)$. As the algorithm of Zagier is somewhat cumbersome, in some special cases we also show how to avoid working with $\mathbb{Z}$-modules, and how to express $c_1(\alpha)$ and $c_2(\alpha)$ in terms of more easily computable special values of Dedekind zeta functions $\zeta_{\mathbb{Q}(\sqrt{d})}(s)$.

\section{Context}\label{contextsection}

\subsection{Notation}

The support of the distribution of an integer-valued random variable $X$ is denoted by $\mathrm{supp} (X)=\{ n \in \mathbb{Z} \, : \, \Pr (X=n)>0 \}$. We call $X$ nondegenerate if there does not exist an integer $n$ such that $X=n$ a.s., that is, if $\mathrm{supp}(X)$ has cardinality at least 2. Let $\mathrm{gcd}(A)$ be the greatest common divisor of a finite or infinite set $A \subseteq \mathbb{Z}$. In particular, $\mathrm{gcd}(\mathrm{supp} (X))$ is the greatest positive integer that divides $X$ a.s. The expected value, variance and covariance are denoted by $\mathbb{E}$, $\mathrm{Var}$ and $\mathrm{Cov}$, respectively. The distance to the nearest integer function is denoted by $\| x \| = \min_{n \in \mathbb{Z}} |x-n|$, $x \in \mathbb{R}$.

We write $a_N \ll b_N$ or $a_N=O(b_N)$ if there exists a constant $C>0$ such that $|a_N| \le C b_N$ for all $N \ge 3$. We write $a_N \asymp b_N$ if $b_N \ll a_N \ll b_N$. Finally, we write $a_N \sim b_N$ if $\lim_{N \to \infty} a_N/b_N=1$. Similar notations are used for functions.

\subsection{Diophantine approximation and continued fractions}

We refer to Khinchin \cite{KH} for a general introduction to Diophantine approximation and regular continued fractions. A real number $\alpha$ is called badly approximable if there exists a constant $C>0$ such that $\| q \alpha \| \ge C/q$ for all $q \in \mathbb{N}$. Every nonzero integer multiple of a badly approximable number is also badly approximable. The set of all badly approximable numbers has Lebesgue measure 0, and Hausdorff dimension 1.

Every irrational number $\alpha$ has a unique regular continued fraction expansion
\[ \alpha=[a_0;a_1,a_2,\ldots] = a_0+ \cfrac{1}{a_1+\cfrac{1}{a_2+\cdots}} \]
with an integer $a_0$ and integers $a_k \ge 1$, $k \ge 1$. An irrational number $\alpha$ is badly approximable if and only if the sequence $a_k$ is bounded. A theorem of Euler and Lagrange states that an irrational number $\alpha$ is a quadratic irrational if and only if the sequence $a_k$ is eventually periodic. In particular, every quadratic irrational is badly approximable.

Every real number $\alpha$ also has a unique backward continued fraction expansion
\[ \alpha = \llbracket b_0, b_1, b_2, \ldots \rrbracket = b_0 - \cfrac{1}{b_1-\cfrac{1}{b_2-\cdots}} \]
with an integer $b_0$ and integers $b_k \ge 2$, $k \ge 1$. We have $b_k=2$ for all $k \ge k_0$ with some $k_0$ if and only if $\alpha$ is rational. The sequence $b_k$ is eventually periodic if and only if $\alpha$ is rational or quadratic irrational. We refer to Hirzebruch \cite{HI} and Zagier \cite{ZA1,ZA2,ZA3} for basic properties and applications of backward continued fractions.

\subsection{Probability metrics}

Let $(M,\rho)$ be a compact metric space, and let $\mathcal{P}(M)$ denote the set of Borel probability measures on $M$. The Wasserstein metric $W_{M,p}$ of order $1 \le p \le \infty$ is a metric on $\mathcal{P}(M)$ defined for finite $p$ as
\[ W_{M,p} (\mu, \nu) = \inf_{\vartheta \in \mathrm{Coupling}(\mu, \nu)} \left( \int_{M \times M} \rho (x,y)^p \, \mathrm{d}\vartheta (x,y) \right)^{1/p}, \quad \mu, \nu \in \mathcal{P}(M) , \]
and for $p=\infty$ (with the $L^p(\vartheta)$-norm replaced by the $L^{\infty}(\vartheta)$-norm) as
\[ W_{M,\infty} (\mu, \nu) = \inf_{\vartheta \in \mathrm{Coupling}(\mu, \nu)} \inf \{ K \ge 0 \, : \, \rho (x,y) \le K \,\, \vartheta-\mathrm{a.e.} \} , \quad \mu, \nu \in \mathcal{P}(M), \]
where $\mathrm{Coupling}(\mu, \nu)$ is the set of $\vartheta \in \mathcal{P}(M \times M)$ whose marginals are $\vartheta (A \times M) = \mu(A)$ and $\vartheta(M \times A) = \nu (A)$, $A \subseteq M$ Borel. The Wasserstein metric originates in the theory of optimal transport, and plays an important role in a large number of fields including PDEs, fluid dynamics and computer science. We refer to Villani \cite{VI} for a comprehensive survey of the theory and applications of optimal transport and the Wasserstein metric.

We only work with two simple compact metric spaces: the unit interval $I=[0,1]$ endowed with the Euclidean metric $\rho(x,y)=|x-y|$, and the circle group $\mathbb{T}=\mathbb{R}/\mathbb{Z}$ endowed with the circular metric $\rho (x,y) = \| x-y \|$. We write $W_{I,p}$ resp.\ $W_{\mathbb{T},p}$ for the Wasserstein metric on $I$ resp.\ $\mathbb{T}$. The space $\mathbb{T}$ can be identified with the interval $[0,1)$ via the fractional part function $x \mapsto \{ x \}$, as well as with the unit circle in $\mathbb{C}$ via $x \mapsto e^{2 \pi i x}$. In particular, this gives a natural identification between $\{ \mu \in \mathcal{P}(I) \, : \, \mu (\{ 1 \})=0 \}$ and $\mathcal{P}(\mathbb{T})$, under which $W_{\mathbb{T},p}(\mu, \nu) \le W_{I,p}(\mu, \nu)$. It will cause no confusion to denote both the Lebesgue measure on $I$ and the normalized Haar measure on $\mathbb{T}$ by $\lambda$.

The optimal transport problem is particularly simple on the 1-dimensional spaces $I$ and $\mathbb{T}$. The main consequence is that the Wasserstein metrics $W_{I,p}$ and $W_{\mathbb{T},p}$ can be expressed in terms of the cumulative distribution functions (cdf's). We define the cdf\footnote{The half-open interval $[0,x)$ in the definition leads to a left-continuous $F_{\mu}$ with $F_{\mu}(0)=0$. The equally suitable choice $[0,x]$ would lead to a right-continuous $F_{\mu}$ with $F_{\mu}(1)=1$.} of $\mu \in \mathcal{P}(I)$ as $F_{\mu}(x) = \mu([0,x))$, $x \in [0,1]$. Graham \cite{GR} showed that for any $\mu \in \mathcal{P}(I)$,
\begin{equation}\label{WIpexplicit}
W_{I,p} (\mu, \lambda) = \| F_{\mu} - F_{\lambda} \|_{L^p ([0,1])}
\end{equation}
is simply the distance of the cdf's in $L^p$-norm. In particular,
\[ W_{I,\infty} (\mu, \lambda) = \sup_{x \in [0,1]} |F_{\mu} (x) - F_{\lambda}(x)| \]
is simply the Kolmogorov metric. These are also known as $L^p$ discrepancy resp.\ $L^{\infty}$ discrepancy in the literature, especially when $\mu$ is the empirical measure of a finite point set. We similarly define the cdf of $\mu \in \mathcal{P}(\mathbb{T})$ as $F_{\mu}(x)=\mu([0,x))$, $x \in [0,1]$, where $[0,x)$ is meant as an interval under the identification of $\mathbb{T}$ with $[0,1)$. In this case \cite{GR}, for any $\mu \in \mathcal{P}(\mathbb{T})$,
\begin{equation}\label{WTpexplicit}
W_{\mathbb{T},p} (\mu, \lambda) = \inf_{c \in \mathbb{R}} \| F_{\mu} - F_{\lambda} -c \|_{L^p ([0,1])} .
\end{equation}
Clearly, $F_{\lambda}(x)=x$ in both $I$ and $\mathbb{T}$.

In the special case $p=2$, we also have explicit formulas for $W_{I,2}(\mu, \lambda)$ and $W_{\mathbb{T},2}(\mu, \lambda)$ in terms of the Fourier coefficients $\hat{\mu}(m) = \int_0^1 e^{2 \pi i m x} \, \mathrm{d}\mu(x)$, $m \in \mathbb{Z}$. Indeed, assuming $\mu(\{ 1 \})=0$, integration by parts shows that the Fourier coefficients of the cdf are $\int_0^1 F_{\mu} (x) e^{2 \pi i m x} \, \mathrm{d}x = (1-\hat{\mu}(m)) / (2 \pi i m)$, $m \neq 0$, hence by the Parseval formula,
\[ W_{I,2}^2(\mu, \lambda) = \sum_{\substack{m \in \mathbb{Z} \\ m \neq 0}} \frac{|\hat{\mu}(m)|^2}{4 \pi^2 m^2} + \left( \int_0^1 (F_{\mu}(x) -x)^2 \, \mathrm{d}x \right)^2 . \]
Noting that the infimum in \eqref{WTpexplicit} is attained at $c=\int_0^1 (F_{\mu}(x) -x) \, \mathrm{d}x$, we also deduce
\begin{equation}\label{WT2explicit}
W_{\mathbb{T},2}^2 (\mu, \lambda) = \sum_{\substack{m \in \mathbb{Z} \\ m \neq 0}} \frac{|\hat{\mu}(m)|^2}{4 \pi^2 m^2} .
\end{equation}
The right-hand side of \eqref{WT2explicit} is also known as diaphony, and further coincides with the so-called periodic $L^2$ discrepancy, defined analogously to the $L^2$ discrepancy, but using all circular arcs on the unit circle instead of arcs anchored at $0$, see \cite{LE}.

We formulate all our results in terms of the Wasserstein metrics $W_{I,p}$ and $W_{\mathbb{T},p}$. As the above discussion shows, these are in fact natural and well known measures of equidistribution defined in terms of cdf's.

\subsection{General random walks on the circle group}

Let us give an overview of random walks on the circle with a general distribution. Let $X_1, X_2, \ldots$ be i.i.d.\ random variables taking values from $\mathbb{T}$, each with distribution $\mu \in \mathcal{P}(\mathbb{T})$. Then $S_n = \sum_{k=1}^n X_k$ is a random walk on $\mathbb{T}$, where addition is meant in the group $\mathbb{T}$, that is, modulo 1. We are interested in the $n$-fold convolution power $\mu^{*n}$ (the distribution of $S_n$) as well as the empirical measure $\mu_N = N^{-1} \sum_{n=1}^N \delta_{S_n}$, where $\delta_x$ denotes the Dirac measure concentrated at the point $x \in \mathbb{T}$. Both converge to the normalized Haar measure $\lambda$ on $\mathbb{T}$ under very general assumptions. L\'evy \cite{LY} showed that $\mu^{*n} \to \lambda$ weakly if and only if $\mathrm{supp}(\mu)$ is not contained in a translate of a finite cyclic subgroup of $\mathbb{T}$. A theorem of Robbins \cite{RO} states that $\Pr (\mu_N \to \lambda \textrm{ weakly})=1$ if and only if $\mathrm{supp}(\mu)$ is not contained in a finite cyclic subgroup of $\mathbb{T}$. The relation $\Pr (\mu_N \to \lambda \textrm{ weakly})=1$ equivalently means that given any continuous function $f: \mathbb{T} \to \mathbb{R}$, the additive functional $\sum_{n=1}^N f(S_n)$ satisfies the strong law of large numbers $N^{-1} \sum_{n=1}^N f(S_n) \to \int_{\mathbb{T}} f \, \mathrm{d} \lambda$ a.s.

The rate of convergence in the relations $\mu^{*n} \to \lambda$ and $\mu_N \to \lambda$ in various probability metrics was first estimated by Schatte \cite{SCH1,SCH2,SCH3,SCH4}, and more recently in \cite{BBR,BB1,BB2,BB4,BR,BW,HS,SU,WE}. See \cite{BB3,BB5,BW,CZ} and references therein for limit theorems for additive functionals $\sum_{n=1}^N f(S_n)$.

We first of all mention that $W_{I,\infty}(\mu^{*n}, \lambda) \to 0$ exponentially fast if and only if $\sup_{m \neq 0} |\hat{\mu}(m)|<1$, which is the analogue of Cram\'er's condition on the circle \cite{SCH1}. In particular, $W_{I,p}(\mu^{*n}, \lambda)$ and $W_{\mathbb{T},p}(\mu^{*n}, \lambda)$ converge to zero exponentially fast for all $1 \le p\le \infty$ whenever $X_1$ has an absolutely continuous component.

Berkes and the author \cite{BB4} proved the following functional central limit theorem and functional law of the iterated logarithm for the cdf $F_{\mu_N}$ of the empirical measure. Let $f_x (t)=\mathds{1}_{[0,x)}(t) - x$, $t \in \mathbb{T}$ denote the centered indicator of the arc $[0,x) \subseteq \mathbb{T}$, let $U$ be a random variable uniformly distributed on $\mathbb{T}$, independent of $X_1, X_2, \ldots$, and let
\[ \Gamma (x,y) = \mathbb{E} (f_x (U) f_y (U)) + \sum_{n=1}^{\infty} \mathbb{E} (f_x (U) f_y (U+S_n)) + \sum_{n=1}^{\infty} \mathbb{E} (f_y (U) f_x (U+S_n)), \quad x,y \in [0,1] . \]
Under the assumption
\begin{equation}\label{psicondition}
W_{I,\infty} (\mu^{*n}, \lambda) = \sup_{x \in [0,1]} |\Pr (S_n \in [0,x)) -x| \ll \frac{1}{n^{1+\delta}} \quad \textrm{with some constant} \quad \delta>0,
\end{equation}
the function $\Gamma$ is continuous, symmetric and positive semidefinite on the unit square $[0,1]^2$, and
\[ \sqrt{N} (F_{\mu_N} - F_{\lambda}) \overset{d}{\to} Y \]
in the Skorokhod space $D[0,1]$, where $Y(x)$, $x \in [0,1]$ is a Gaussian process with mean zero and covariance function $\Gamma$. Further, with probability 1, the sequence
\[ \frac{\sqrt{N}(F_{\mu_N} - F_{\lambda})}{\sqrt{2 \log \log N}}, \quad N \in \mathbb{N} \]
is relatively compact in $D[0,1]$, and its class of limit functions in the Skorokhod topology is the closed unit ball $B(\Gamma)$ of the reproducing kernel Hilbert space defined by the kernel function $\Gamma$. We refer to Billingsley \cite{BI} for a general introduction to functional limit theorems and Skorokhod spaces, and to Aronszajn \cite{AR} for reproducing kernel Hilbert spaces.

These functional limit theorems immediately yield limit theorems for the distance from $\mu_N$ to $\lambda$ in various probability metrics. In particular, formulas \eqref{WIpexplicit} and \eqref{WTpexplicit}, together with the fact that the maps $F \mapsto \| F \|_{L^p ([0,1])}$ and $F \mapsto \inf_{c \in \mathbb{R}} \| F-c \|_{L^p ([0,1])}$ are continuous in the Skorokhod space $D[0,1]$, show the following.
\begin{cor}\label{limitlawscorollary} Under assumption \eqref{psicondition}, for any $1 \le p\le \infty$, we have the limit laws
\[ \sqrt{N} W_{I,p}(\mu_N,\lambda) \overset{d}{\to} \| Y \|_{L^p ([0,1])} \quad \textrm{and} \quad \sqrt{N} W_{\mathbb{T},p}(\mu_N,\lambda) \overset{d}{\to} \inf_{c \in \mathbb{R}} \| Y -c \|_{L^p ([0,1])}, \]
and the laws of the iterated logarithm
\[ \begin{split} \limsup_{N \to \infty} \frac{\sqrt{N}W_{I,p} (\mu_N, \lambda)}{\sqrt{2 \log \log N}} &= \sup_{f \in B(\Gamma)} \| f \|_{L^p ([0,1])} \quad \textrm{a.s.}, \\ \limsup_{N \to \infty} \frac{\sqrt{N}W_{\mathbb{T},p} (\mu_N, \lambda)}{\sqrt{2 \log \log N}} &= \sup_{f \in B(\Gamma)} \inf_{c \in \mathbb{R}} \| f -c \|_{L^p ([0,1])} \quad \textrm{a.s.} \end{split} \]
\end{cor}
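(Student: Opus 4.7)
The plan is to derive both the limit laws and the laws of the iterated logarithm directly from the functional results stated above via the continuous mapping theorem. By formulas \eqref{WIpexplicit} and \eqref{WTpexplicit},
\[
\sqrt{N} W_{I,p}(\mu_N, \lambda) = \Phi_p\!\left(\sqrt{N}(F_{\mu_N} - F_\lambda)\right), \qquad \sqrt{N} W_{\mathbb{T},p}(\mu_N, \lambda) = \Psi_p\!\left(\sqrt{N}(F_{\mu_N} - F_\lambda)\right),
\]
where $\Phi_p(F) = \| F \|_{L^p([0,1])}$ and $\Psi_p(F) = \inf_{c \in \mathbb{R}} \| F - c \|_{L^p([0,1])}$ are functionals on $D[0,1]$. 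Everything thus reduces to establishing sufficient continuity of $\Phi_p$ and $\Psi_p$ on $D[0,1]$, and then feeding in the functional central limit theorem and the functional LIL from \cite{BB4} stated in the excerpt.

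The key technical step is that $\Phi_p$ and $\Psi_p$ are continuous at every $F \in C[0,1]$ when $D[0,1]$ carries the Skorokhod $J_1$ topology. It is a classical fact that Skorokhod convergence $F_n \to F$ with continuous limit $F$ implies uniform convergence $\| F_n - F \|_\infty \to 0$. From this, for $1 \le p < \infty$ one gets $|\Phi_p(F_n) - \Phi_p(F)| \le \| F_n - F \|_{L^p} \le \| F_n - F \|_\infty$, and for $p = \infty$ directly $|\Phi_\infty(F_n) - \Phi_\infty(F)| \le \| F_n - F \|_\infty$; applying the triangle inequality to a fixed near-minimiser $c$ in the definition of $\Psi_p$ yields the same type of estimate. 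Hence both functionals are continuous at each continuous $F$.

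For the distributional part, the Gaussian process $Y$ has continuous sample paths almost surely, because $\Gamma$ is continuous on the compact square $[0,1]^2$: this is a standard consequence of Gaussian process theory, via Dudley's entropy bound applied to the intrinsic pseudometric $d(x,y)^2 = \Gamma(x,x) - 2\Gamma(x,y) + \Gamma(y,y)$. The continuous mapping theorem applied to $\sqrt{N}(F_{\mu_N} - F_\lambda) \overset{d}{\to} Y$ via $\Phi_p$ and $\Psi_p$ then produces the two asserted limit laws. For the LIL, set $\xi_N = \sqrt{N}(F_{\mu_N} - F_\lambda)/\sqrt{2 \log \log N}$; by hypothesis $(\xi_N)$ is almost surely relatively compact in $D[0,1]$ with limit set $B(\Gamma)$. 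Every element of $B(\Gamma)$ is continuous on $[0,1]$, since $B(\Gamma)$ is the closure of the linear span of the continuous sections $\Gamma(\cdot,y)$ (cf.\ Aronszajn \cite{AR}) and uniform limits of continuous functions are continuous. The continuity of $\Phi_p$ and $\Psi_p$ on $C[0,1] \subset D[0,1]$ combined with relative compactness gives
\[
\limsup_{N \to \infty} \Phi_p(\xi_N) = \sup_{f \in B(\Gamma)} \Phi_p(f), \qquad \limsup_{N \to \infty} \Psi_p(\xi_N) = \sup_{f \in B(\Gamma)} \Psi_p(f)
\]
almost surely, which are precisely the two LIL identities.

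There is no serious obstacle in this argument: the corollary is essentially a continuous-mapping-theorem wrapper around the functional results of \cite{BB4}. The only points needing attention are the upgrade from Skorokhod convergence to uniform convergence at continuous limits, and the path continuity of $Y$ and of elements of $B(\Gamma)$, both of which follow from the continuity of $\Gamma$ already built into the hypothesis \eqref{psicondition}.
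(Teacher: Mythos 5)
Your proposal is correct and follows the same route as the paper, which proves the corollary in a single preceding sentence by invoking formulas \eqref{WIpexplicit}--\eqref{WTpexplicit} together with the (Skorokhod) continuity of $F\mapsto\|F\|_{L^p}$ and $F\mapsto\inf_c\|F-c\|_{L^p}$ and then applying the continuous mapping theorem to the functional CLT and LIL from \cite{BB4}. Your extra care in restricting continuity of the functionals to continuous limit functions (and verifying that $Y$ and the elements of $B(\Gamma)$ are continuous) is in fact the more precise version of the paper's shorthand, since the $p=\infty$ functional is not Skorokhod-continuous on all of $D[0,1]$.
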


If $X_1$ is uniformly distributed on $\mathbb{T}$, then one readily checks that the sequence $S_n$ is also i.i.d.\ uniformly distributed on $\mathbb{T}$. In this case $\Gamma (x,y) = \mathbb{E} (f_x(U) f_y(U)) = \min \{ x,y \} -xy$, hence $Y$ is the Brownian bridge, and
\[ B(\Gamma) = \left\{ f: [0,1] \to \mathbb{R} \, : \, f \textrm{ is absolutely continuous, } f(0)=f(1)=0, \,\, \int_0^1 (f'(x))^2 \, \mathrm{d}x \le 1 \right\} . \]
Corollary \ref{limitlawscorollary} thus contains the empirical measure of an i.i.d.\ uniformly distributed sample as a special case. We refer to Bobkov and Ledoux \cite{BL} for a comprehensive survey on the empirical measure of i.i.d.\ samples with more general distributions on the real line.

\subsection{Lattice random walks on the circle group}\label{latticesection}

The situation is more delicate for random walks with a lattice distribution. In this setting, let $X_1, X_2, \ldots$ be i.i.d.\ integer-valued random variables and $\alpha$ an irrational number, set $S_n = \sum_{k=1}^n X_k$, and consider the lattice random walk $S_n \alpha \pmod{\mathbb{Z}}$ on $\mathbb{T}$. The rate of convergence of both the convolution power $\mu^{*n}$ (the distribution of $S_n \alpha \pmod{\mathbb{Z}}$) and the empirical measure $\mu_N = N^{-1} \sum_{n=1}^N \delta_{S_n \alpha \pmod{\mathbb{Z}}}$ is sensitive to the Diophantine approximation properties of $\alpha$, and the growth rate of the random integer sequence $S_n$. The rate of convergence of $\mu^{*n}$ was basically settled in \cite{BB1}.

Let $\varphi(x)=\mathbb{E} (e^{ixX_1})$ be the characteristic function of $X_1$. Wu and Zhu \cite{WZ} proved that under the assumptions
\begin{equation}\label{varphicondition}
|1-\varphi(x)| \asymp |x|^{\beta} \quad \textrm{for all } x \textrm{ in an open neighborhood of } 0
\end{equation}
and
\begin{equation}\label{diophantinecondition}
0< \liminf_{q \to \infty} q^{\gamma} \| q \alpha \| < \infty
\end{equation}
with some constants $0< \beta \le 2$ and $\gamma \ge 1$, we have for all $1 \le p < \infty$,
\[ \begin{array}{cc} \displaystyle{\mathbb{E} \left( W_{\mathbb{T},p} (\mu_N, \lambda) \right) \asymp \frac{1}{\sqrt{N}}} & \textrm{if } \beta \gamma <2, \\ \displaystyle{\frac{1}{\sqrt{N}} \ll \mathbb{E} \left( W_{\mathbb{T},p} (\mu_N, \lambda) \right) \ll \frac{(\log N)^{\max \{ 1/2, 1-1/p \}}}{\sqrt{N}}} & \textrm{if } \beta \gamma =2, \\ \displaystyle{0<\limsup_{N \to \infty} N^{1/(\beta \gamma)} \mathbb{E} \left( W_{\mathbb{T},p} (\mu_N, \lambda) \right) <\infty} & \textrm{if } \beta \gamma >2. \end{array} \]
We thus observe a phase transition as the parameter $\beta \gamma$ changes, with the case $\beta \gamma <2$ corresponding to weak dependence (matching the typical rate $N^{-1/2}$ for i.i.d.\ samples), and the case $\beta \gamma >2$ corresponding to strong dependence of the sequence $S_n \alpha \pmod{\mathbb{Z}}$. A similar phase transition was observed in \cite{BB2} in the almost sure order of $W_{I,\infty}(\mu_N, \lambda)$, which was also shown to be $N^{-1/\max \{ 2,\beta \gamma \}}$ up to logarithmic factors.

Assuming \eqref{varphicondition}, \eqref{diophantinecondition} and some additional technical assumptions, we have $W_{I,\infty}(\mu^{*n}, \lambda) \ll n^{-1/(\beta \gamma)}$, and this is sharp \cite{BB1}. In particular, in the case $\beta \gamma <1$, condition \eqref{psicondition} is satisfied, and Corollary \ref{limitlawscorollary} yields precise limit theorems for $W_{I,p}(\mu_N,\lambda)$ and $W_{\mathbb{T},p}(\mu_N, \lambda)$ for all $1 \le p\le \infty$. It remains open whether these limit theorems can be extended to $\beta \gamma <2$.

The case when $X_1$ has mean zero and finite, positive variance, and $\alpha$ is badly approximable is particularly interesting: then \eqref{varphicondition} and \eqref{diophantinecondition} are satisfied with $\beta=2$ and $\gamma =1$, thus we are at the critical threshold $\beta \gamma =2$ of the phase transition. In this special case, Wu and Zhu \cite{WZ} were able to remove the logarithmic gap from their estimates, and showed that for $1 \le p \le 2$, we have $\mathbb{E} (W_{\mathbb{T},p} (\mu_N, \lambda)) \asymp \sqrt{(\log N)/N}$. In this paper, we improve their result by finding the precise asymptotics of the expected value and the variance of $W_{\mathbb{T},2}^2(\mu_N, \lambda)$ for quadratic irrational $\alpha$.

We refer to \cite{AN,BOB1,BOB2,DF} for further connections between Diophantine approximation and random walks on the real line, in particular Edgeworth expansions.

\section{Random walks with badly approximable numbers}\label{badlyapproxsection}

For the rest of the paper, let $X_1, X_2, \ldots$ be i.i.d.\ integer-valued random variables with characteristic function $\varphi (x) = \mathbb{E} (e^{ixX_1})$. Fix a real number $\alpha$, let $S_n=\sum_{k=1}^n X_k$, and let $\mu_N = N^{-1} \sum_{n=1}^N \delta_{S_n \alpha \pmod{\mathbb{Z}}}$ denote the empirical measure.

The main result of this section concerns a nondegenerate $X_1$ and a badly approximable $\alpha$.
\begin{thm}\label{badlyapproxtheorem} Assume $X_1$ is nondegenerate, and $\alpha$ is badly approximable. Then
\[ \begin{split} \mathbb{E} \left( W_{\mathbb{T},2}^2 (\mu_N, \lambda) \right) &= \frac{1}{N} \sum_{1 \le m \le \sqrt{N}} \frac{1}{2 \pi^2 m^2} \cdot \frac{1-|\varphi (2 \pi m \alpha)|^2}{|1-\varphi (2 \pi m \alpha)|^2} + O \left( \frac{1}{N} \right) , \\ \mathrm{Var} \left( W_{\mathbb{T},2}^2 (\mu_N, \lambda) \right) &= \frac{1}{N^2} \sum_{1 \le m \le \sqrt{N}} \frac{1}{4 \pi^4 m^4} \left( \frac{1-|\varphi (2 \pi m \alpha)|^2}{|1-\varphi (2 \pi m \alpha)|^2} \right)^2 + O \left( \frac{(\log \log N)^2}{N^2} \right)  \end{split} \]
with implied constants depending only on $\alpha$ and the distribution of $X_1$.
\end{thm}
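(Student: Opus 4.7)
The plan is to start with the Fourier representation \eqref{WT2explicit},
\[ W_{\mathbb{T},2}^2(\mu_N,\lambda) = \sum_{m \neq 0}\frac{|\hat\mu_N(m)|^2}{4\pi^2 m^2}, \]
and to compute the moments of the individual terms $|\hat\mu_N(m)|^2$ exactly, then sum. Writing $\hat\mu_N(m) = N^{-1}\sum_{n=1}^N e^{2\pi i m\alpha S_n}$, expanding the square, and using the i.i.d.\ property of $(X_k)$ to reduce each $\mathbb{E} e^{2\pi i m\alpha(S_{n_1}-S_{n_2})}$ to a power of $z_m := \varphi(2\pi m\alpha)$, a routine summation of the geometric series gives
\[ \mathbb{E}|\hat\mu_N(m)|^2 = \frac{1}{N}\cdot\frac{1-|z_m|^2}{|1-z_m|^2} + \frac{2}{N^2}\,\mathrm{Re}\frac{z_m^{N+1}-z_m}{(1-z_m)^2}. \]
Inserting this into the Fourier expansion, using $z_{-m}=\overline{z_m}$ to pair $m$ with $-m$, and truncating at $|m|=\sqrt N$ immediately produces the stated main term in the expectation. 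What remains are two error contributions: a residual of order $N^{-2}m^{-2}|1-z_m|^{-2}$ summed over $|m|\le\sqrt N$, and the tail over $|m|>\sqrt N$.

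The key Diophantine input controls $|1-z_m|$ from below. Nondegeneracy of $X_1$ provides distinct points $a,b \in \mathrm{supp}(X_1)$, and retaining only the $(X_1,X_1')\in\{(a,b),(b,a)\}$ contribution to the double expectation yields
\[ 1-|z_m|^2 = \mathbb{E}\bigl(1-\cos(2\pi m\alpha(X_1-X_1'))\bigr) \gg 1-\cos(2\pi m(a-b)\alpha) \gg \|m(a-b)\alpha\|^2, \]
hence $|1-z_m| \gg \|m(a-b)\alpha\|^2$. Since $(a-b)\alpha$ is also badly approximable, a dyadic decomposition of the range of $m$ based on the three-distance theorem gives the key summation estimate
\[ \sum_{m \le M}\frac{1}{m^2 \|m(a-b)\alpha\|^4} \ll M^2, \]
which yields exactly the required $O(1/N)$ bound on the residual at $M=\sqrt N$. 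The tail $|m|>\sqrt N$ is handled by combining the same pointwise bound on $\mathbb{E}|\hat\mu_N(m)|^2$ with the trivial estimate $\mathbb{E}|\hat\mu_N(m)|^2 \le 1$ on the sparse set of near-resonant $m$ where the former blows up.

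For the variance, I would run the same programme at fourth order: expand $\mathbb{E}(|\hat\mu_N(m_1)|^2|\hat\mu_N(m_2)|^2)$ as a fourfold sum and use independence to factor each term as a product of characteristic function values according to the combinatorial pattern of the four times $n_1,n_1',n_2,n_2'$. The diagonal $m_2 = \pm m_1$ dominates; a parallel geometric-series computation there produces
\[ \mathrm{Var}(|\hat\mu_N(m)|^2) = \frac{1}{N^2}\left(\frac{1-|z_m|^2}{|1-z_m|^2}\right)^{2} + \text{lower-order corrections}, \]
from which dividing by $4\pi^4 m^4$ and summing over $|m|\le\sqrt N$ produces the stated main term for the variance. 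Off-diagonal covariances $m_2 \neq \pm m_1$ collapse to lower-order terms because their evaluation contains a non-resonant character $z_{m_1\pm m_2}$.

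The principal obstacle will be uniform absorption of the factors $|1-z_m|^{-2}$ and $|1-z_m|^{-4}$ produced by near-resonant $m$, namely those with $\|m\alpha\|$ close to the extremal value $\asymp m^{-1}$ that is only permitted by the badly-approximable hypothesis. The chief technical devices are the pigeonhole lower bound above, the three-distance theorem, and a dyadic decomposition in both $m$ and $\|m(a-b)\alpha\|$. The iterated-logarithmic factor $(\log\log N)^2$ in the variance error should emerge from refined Denjoy--Koksma-type sums that track the second-order boundary contributions at the truncation $|m|=\sqrt N$ once the main logarithmic profile of the variance has been extracted.
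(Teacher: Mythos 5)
Your plan for the expectation is essentially the paper's own: expand $\mathbb{E}|\hat\mu_N(m)|^2$ exactly via geometric series, note $1-|\varphi(2\pi x)|^2 \gg \|L'x\|^2$ by nondegeneracy, split the sum at $m\asymp\sqrt N$, and control the residual and tail with badly-approximable Diophantine sum estimates. This part of your argument is correct and matches the paper's proof in all essentials.

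The variance treatment, however, has two genuine gaps. First, the diagonal term. Computing $\mathbb{E}|\hat\mu_N(m)|^4$ is \emph{not} a parallel geometric-series computation: it requires expanding a quadruple sum over $(n_1,n_2,n_3,n_4)$, classifying the configurations by the ordering of the indices, and counting carefully which products of powers of $\varphi$ survive with which multiplicities. The paper isolates this into Lemma~\ref{momentlemma}, which is itself a nontrivial sharpening of an earlier counting argument (improving $\binom{Mk}{s-M}$ to $\binom{Mk}{s-M-1}$), with an error term of the precise shape $N/\|Ax\|^{4p-2} + N^{p-1}/\|Ax\|^{2p+2}$ that must then be summed against $m^{-4}$. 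Your sketch does not supply this moment estimate or explain how the "lower-order corrections" are controlled after dividing by $m^4$ and summing.

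Second, and more seriously, your explanation of the $(\log\log N)^2$ error is wrong. You attribute it to "refined Denjoy--Koksma-type sums that track the second-order boundary contributions at the truncation $|m|=\sqrt N$," but in fact the truncation contributes only $O(N^{-2})$. The $(\log\log N)^2$ comes entirely from the \emph{off-diagonal} covariances $m_1\neq m_2$, and your claim that these "collapse to lower-order terms because their evaluation contains a non-resonant character $z_{m_1\pm m_2}$" is not justified: $\|(m_1\pm m_2)\alpha\|$ can be small simultaneously with $\|m_1\alpha\|$ and $\|m_2\alpha\|$ (though not all three near-extremal at once). Bounding the resulting triple Diophantine sum
\[
\sum_{m_1\neq m_2}\frac{1}{m_1^2 m_2^2}\min\Big\{\frac{1}{\|m_1\alpha\|^2},N\Big\}\min\Big\{\frac{1}{\|m_2\alpha\|^2},N\Big\}\min\Big\{\frac{1}{\|(m_1\pm m_2)\alpha\|^2},N\Big\} \ll N(\log\log N)^2
\]
is the crux of the whole variance estimate (the paper's Lemma~\ref{diophantinelemma2}), and it requires a careful dyadic decomposition according to the relative sizes of $\|m_1\alpha\|$ and $\|m_2\alpha\|$ together with the triangle inequality $\|(m_1\pm m_2)\alpha\|\ge\big|\|m_1\alpha\|-\|m_2\alpha\|\big|$. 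You would also need the combinatorial bookkeeping of which orderings of $(n_1,n_2,n_3,n_4)$ give vanishing covariance (disjoint $I,J$), and a verification that the surviving configurations produce exactly the three frequencies $|m_1|,|m_2|,|m_1\pm m_2|$. Without these steps, the variance claim is unproved.
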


Theorem \ref{badlyapproxtheorem} will immediately imply the following.
\begin{cor}\label{badlyapproxcorollary} Assume $X_1$ is nondegenerate, and $\alpha$ is badly approximable. Then
\[ \mathbb{E} \left( W_{\mathbb{T},2}^2 (\mu_N, \lambda) \right) \ll \frac{\log N}{N} \quad \textrm{and} \quad \mathrm{Var} \left( W_{\mathbb{T},2}^2 (\mu_N, \lambda) \right) \ll \frac{\log N}{N^2} . \]
If in addition $\mathbb{E} (X_1) =0$ and $0<\mathbb{E} (X_1^2) < \infty$, then
\[ \mathbb{E} \left( W_{\mathbb{T},2}^2 (\mu_N, \lambda) \right) \asymp \frac{\log N}{N} \quad \textrm{and} \quad \mathrm{Var} \left( W_{\mathbb{T},2}^2 (\mu_N, \lambda) \right) \asymp \frac{\log N}{N^2} . \]
All implied constants depend only on $\alpha$ and the distribution of $X_1$.
\end{cor}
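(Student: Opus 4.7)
The plan is to apply Theorem~\ref{badlyapproxtheorem} directly and reduce both statements to two-sided control of the explicit sums
\[ S_1(N) := \sum_{1 \le m \le \sqrt{N}} \frac{R(m)}{m^2}, \qquad S_2(N) := \sum_{1 \le m \le \sqrt{N}} \frac{R(m)^2}{m^4}, \]
where $R(m) := (1-|\varphi(2\pi m\alpha)|^2)/|1-\varphi(2\pi m\alpha)|^2$. A preliminary substitution $(X_1, \alpha) \mapsto (X_1/L, L\alpha)$ with $L=\mathrm{gcd}(\mathrm{supp}(X_1))$ leaves $R(m)$ invariant, because $\varphi_{X_1/L}(2\pi m L\alpha) = \varphi(2\pi m\alpha)$, and preserves bad approximability, so I may assume $L=1$.

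The core step is the pointwise estimate $R(m) \ll \|m\alpha\|^{-2}$, with implied constant depending only on the distribution of $X_1$. I would derive it from the algebraic identity $R(m) + 1 = 2(1-\mathrm{Re}\,\varphi(2\pi m\alpha))/|1-\varphi(2\pi m\alpha)|^2$ together with the trivial inequality $|1-\varphi|^2 \ge (1-\mathrm{Re}\,\varphi)^2$, reducing the task to the lower bound $1-\mathrm{Re}\,\varphi(2\pi m\alpha) \gg \|m\alpha\|^2$. Writing $1-\mathrm{Re}\,\varphi(t) = 2\mathbb{E}\sin^2(tX_1/2)$ and retaining only a single fixed nonzero $n_0 \in \mathrm{supp}(X_1)$ (available after the reduction, since $X_1$ is nondegenerate and $L=1$), the inequality $|\sin(\pi x)| \ge 2\|x\|$ handles the regime of small $\|m\alpha\|$, and a compactness argument (using $\varphi(2\pi s)=1$ iff $s \in \mathbb{Z}$ when $L=1$) handles the regime where $\|m\alpha\|$ is bounded below. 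This step is the main obstacle, since no moment assumption on $X_1$ is available; the route outlined sidesteps moments by extracting the required lower bound from a single atom of the support.

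Plugging the pointwise bound into $S_1(N)$ and $S_2(N)$, the upper halves of the corollary reduce to the classical Diophantine estimates
\[ \sum_{m \le M} \frac{1}{m^2 \|m\alpha\|^2} \ll \log M \quad \textrm{and} \quad \sum_{m \le M} \frac{1}{m^4 \|m\alpha\|^4} \ll \log M \]
for any badly approximable $\alpha$, which I would prove by splitting $[1,M]$ into the Ostrowski blocks $[q_k, q_{k+1})$ delimited by the convergent denominators of $\alpha$ and using that $q_{k+1}/q_k$ is bounded. For the matching $\asymp$ bounds under $\mathbb{E}(X_1)=0$ and $0 < \sigma^2 = \mathbb{E}(X_1^2) < \infty$, the Taylor expansion $\varphi(t) = 1 - \sigma^2 t^2/2 + o(t^2)$ (which uses precisely the finite variance hypothesis) upgrades the pointwise estimate to the asymptotic $R(m) \sim (\pi^2 \sigma^2 \|m\alpha\|^2)^{-1}$ as $\|m\alpha\| \to 0$, and restricting $S_1(N)$ and $S_2(N)$ to the convergent denominators $q_k \le \sqrt{N}$ of $\alpha$ (for which $\|q_k\alpha\| \asymp 1/q_k$) produces $\asymp \log N$ terms of constant order each, yielding the required $\gg \log N$ lower bounds.
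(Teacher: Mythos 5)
Your proposal is correct and follows essentially the same route as the paper: after invoking Theorem \ref{badlyapproxtheorem}, you bound the two Diophantine sums via the pointwise estimate $R(m)\ll\|m\alpha\|^{-2}$, and under the extra moment hypotheses you use the Taylor expansion of $\varphi$ together with the geometric growth of the convergent denominators of $\alpha$ to obtain the matching $\gg\log N$ lower bounds. The only differences are self-contained derivations of auxiliary facts already present in the paper — you prove $1-\mathrm{Re}\,\varphi(2\pi m\alpha)\gg\|m\alpha\|^2$ directly from a single atom of the support plus a compactness argument, rather than quoting the Petrov-type bound underlying \eqref{varphiestimate}, and you obtain the lower half of the Diophantine estimate by restricting to convergent denominators rather than via the Dirichlet-theorem argument used to prove \eqref{diophantine4} in Lemma \ref{diophantinelemma1}.
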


The estimates for the expected value in Corollary \ref{badlyapproxcorollary} were proved by Wu and Zhu \cite{WZ}, whereas the estimates for the variance are new. We also show that Corollary \ref{badlyapproxcorollary} cannot be improved to precise asymptotic relations for a general badly approximable $\alpha$.
\begin{cor}\label{noasymptoticcorollary} Assume $\mathbb{E} (X_1) =0$ and $0< \mathbb{E} (X_1^2) < \infty$. Then there exists a badly approximable $\alpha$ for which the sequences
\[ \frac{N}{\log N} \mathbb{E} \left( W_{\mathbb{T},2}^2 (\mu_N, \lambda) \right) \quad \textrm{and} \quad \frac{N^2}{\log N} \mathrm{Var} \left( W_{\mathbb{T},2}^2 (\mu_N, \lambda) \right) \]
do not converge.
\end{cor}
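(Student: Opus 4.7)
The plan is to apply Theorem \ref{badlyapproxtheorem} and then construct a specific badly approximable $\alpha$ whose partial quotients oscillate between two constant values in very long blocks. The error terms $O(1/N)$ and $O((\log\log N)^2/N^2)$ in Theorem \ref{badlyapproxtheorem} are $o((\log N)/N)$ and $o((\log N)/N^2)$ respectively, so the corollary reduces to exhibiting a badly approximable $\alpha$ for which the partial sums
\[ f(M) = \sum_{1 \le m \le M} \frac{1}{2 \pi^2 m^2} \cdot \frac{1-|\varphi (2 \pi m \alpha)|^2}{|1-\varphi (2 \pi m \alpha)|^2}, \]
\[ g(M) = \sum_{1 \le m \le M} \frac{1}{4 \pi^4 m^4} \left( \frac{1-|\varphi (2 \pi m \alpha)|^2}{|1-\varphi (2 \pi m \alpha)|^2} \right)^2 \]
satisfy that $f(\sqrt{N})/\log N$ and $g(\sqrt{N})/\log N$ do not converge as $N \to \infty$.

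Under the hypothesis $\mathbb{E}(X_1)=0$ and $\sigma^2 = \mathbb{E}(X_1^2) \in (0,\infty)$, the Taylor expansion $\varphi(x) = 1 - \sigma^2 x^2/2 + o(x^2)$ combined with the $2\pi$-periodicity of $\varphi$ (since $X_1$ is integer-valued) gives
\[ \frac{1-|\varphi(2\pi m\alpha)|^2}{|1-\varphi(2\pi m\alpha)|^2} = \frac{1+o(1)}{\pi^2 \sigma^2 \|m\alpha\|^2} \qquad \textrm{as } \|m\alpha\| \to 0. \]
For any fixed $\varepsilon>0$, the $m$ with $\|m\alpha\| \ge \varepsilon$ contribute only $O(1)$ to each of $f$ and $g$. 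Using the three-distance theorem to control $\|m\alpha\|$ in the intervals $q_k \le m < q_{k+1}$ (including the semiconvergents $jq_k+q_{k-1}$, $1 \le j < a_{k+1}$), one obtains, up to $O(1)$,
\[ f(M) = \frac{1+o(1)}{2\pi^4\sigma^2}\sum_{k:\, q_k\le M} \left(\frac{q_{k+1}}{q_k}\right)^2, \qquad g(M) = \frac{1+o(1)}{8\pi^8\sigma^4}\sum_{k:\, q_k\le M} \left(\frac{q_{k+1}}{q_k}\right)^4, \]
where $q_k$ denotes the convergent denominators of $\alpha$.

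The construction is as follows. Fix super-exponentially growing block lengths $n_j = 2^{j!}$, and define $\alpha = [0;a_1,a_2,\ldots]$ with $a_k = 1$ throughout odd blocks $B_j$ of length $n_j$ and $a_k = 2$ throughout even blocks. Since $a_k \in \{1,2\}$, $\alpha$ is badly approximable. Inside a long block with $a_k \equiv 1$, the ratio $q_{k+1}/q_k$ converges exponentially fast to $\phi = (1+\sqrt{5})/2$ and $\log q_k$ grows linearly with rate $\log\phi$ per index; similarly, inside a long block with $a_k \equiv 2$, the ratio converges to $\psi = 1+\sqrt{2}$ with rate $\log\psi$. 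Let $k_j$ be the last index of $B_j$ and set $M_j = q_{k_j}$. By the super-exponential growth of the $n_j$, block $B_j$ dominates both $f(M_j)$ and $\log M_j$, yielding
\[ \lim_{\substack{j \to \infty \\ j\textrm{ odd}}} \frac{f(M_j)}{\log M_j} = \frac{\phi^2}{2\pi^4\sigma^2\log\phi} \ne \frac{\psi^2}{2\pi^4\sigma^2\log\psi} = \lim_{\substack{j \to \infty \\ j\textrm{ even}}} \frac{f(M_j)}{\log M_j}. \]
Setting $N_j = M_j^2$ gives two distinct subsequential limits of $(N/\log N)\mathbb{E}(W_{\mathbb{T},2}^2(\mu_N,\lambda))$, proving nonconvergence. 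The variance statement follows identically, with $\phi^2,\psi^2$ replaced by $\phi^4,\psi^4$.

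The main obstacle is the reduction of $f(M)$ and $g(M)$ to the weighted sums $\sum (q_{k+1}/q_k)^2$ and $\sum (q_{k+1}/q_k)^4$ with uniform-in-$k$ implicit constants. One must bound the contribution from all $m$ between consecutive convergents via the three-distance theorem, ensure that the terms coming from semiconvergents do not accumulate beyond $O(1)$ times the contribution at $q_k$, and verify that the $o(1)$ error in the Taylor expansion translates into a uniformly small relative error across the range $1 \le m \le M$. Once that uniformity is secured, the block structure of $\alpha$ translates directly into oscillation of $f(M_j)/\log M_j$ and $g(M_j)/\log M_j$ as required.
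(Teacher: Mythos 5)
Your overall strategy --- apply Theorem \ref{badlyapproxtheorem}, then construct a badly approximable $\alpha$ whose partial quotients alternate between two constant values in exponentially-lengthening blocks so that the ratio of the Diophantine sum to $\log M$ oscillates --- is the same as the paper's. However, there is a genuine gap in the key technical step, and the precise asymptotic you assert is actually false.

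You claim, ``using the three-distance theorem,'' that
\[ \sum_{m \le M}\frac{1}{m^{2}\|m\alpha\|^{2}} \sim \sum_{k:\,q_k\le M}\Big(\frac{q_{k+1}}{q_k}\Big)^{2} , \]
and you yourself flag this reduction as ``the main obstacle'' without carrying it out. But the claimed identity is not merely unproved --- it is incorrect as stated. For $\alpha=(1+\sqrt 5)/2$ the right side is $\sim \phi^{2}k \approx 2.62\,k$ as $M=q_k\to\infty$, whereas by Theorem \ref{quadraticsumtheorem} and the value $c_1(\phi)=\sqrt 5/(150\log\phi)$ one computes $\sum_{m<q_k} m^{-2}\|m\phi\|^{-2} \sim 4\pi^4\sqrt 5\,k/150 \approx 5.81\,k$, so the left side exceeds the right by a factor roughly $2.2$. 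For $\alpha=\sqrt 2$ the discrepancy factor is roughly $2.0$. So the weights are not $(q_{k+1}/q_k)^{2}$, and the correction factor itself depends on the continued fraction, so the subsequential limits you compute, $\phi^{2}/(2\pi^4\sigma^2\log\phi)$ and $\psi^{2}/(2\pi^4\sigma^2\log\psi)$, are wrong. (The nonconvergence conclusion happens to survive because $c_1(\phi)\neq c_1(\sqrt 2)$, but this is not something your argument establishes.)

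The deeper issue is that choosing $a_j\in\{1,2\}$ demands an essentially exact per-convergent asymptotic, which is a much harder estimate than you suggest. The paper circumvents exactly this difficulty: it uses the bound from \cite{BO}, $\big|\sum_{m<q_k} m^{-\theta}\|m L\alpha\|^{-\theta}-\zeta(2\theta)\sum_{j\le k}a_j^{\theta}\big|\ll\sum_{j\le k}a_j^{\theta-1}$, whose error term is of the same order as the main term when all $a_j$ are bounded by $2$, but which becomes negligible in blocks where $a_j\equiv a$ with $a$ a large free parameter. The paper therefore oscillates between $a_j=1$ and $a_j=a$, picks $\rho$ and then $a$ large at the end, and needs only the one-sided bounds $\ll 1$ and $\gg a^2/\log a$ rather than exact subsequential limits. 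Your version, by contrast, cannot be completed with the estimates available in the paper; one would need something like a blockwise version of Theorem \ref{quadraticsumtheorem} applied to eventually-periodic stretches of a non-quadratic $\alpha$, which is a nontrivial additional argument. Finally, a minor omission: you work with $\|m\alpha\|$ throughout, but by \eqref{diophantineasymptotics1}--\eqref{diophantineasymptotics2} the relevant quantity is $\|mL\alpha\|$ with $L=\gcd(\mathrm{supp}(X_1))$, so the continued fraction should be prescribed for $L\alpha$ rather than for $\alpha$.
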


The rest of Section \ref{badlyapproxsection} is dedicated to the proof of Theorem \ref{badlyapproxtheorem} and Corollaries \ref{badlyapproxcorollary} and \ref{noasymptoticcorollary}.

\subsection{Diophantine sum estimates}

We start with some preliminary estimates of sums involving the distance to the nearest integer function.
\begin{lem}\label{diophantinelemma1} Let $\alpha$ be a badly approximable number, and let $1<\theta< \theta'$ be constants. For any $0< \delta \le 1/2$ and any integer $M \ge 1$,
\begin{align}
\sum_{\substack{m=1 \\ \| m \alpha \|<\delta}}^{\infty} \frac{1}{m^{\theta}} &\ll \delta^{\theta}, \label{diophantine1} \\
\sum_{\substack{m=1 \\ \| m \alpha \| \ge \delta}}^{\infty} \frac{1}{m^{\theta} \| m \alpha \|^{\theta'}} &\ll \frac{1}{\delta^{\theta'-\theta}}, \label{diophantine2} \\
\sum_{\substack{m=M \\ \| m \alpha \| \ge \delta}}^{\infty} \frac{1}{m^{\theta} \| m \alpha \|^{\theta}} &\ll \left\{ \begin{array}{cc} 1+\log \frac{1}{M \delta} & \textrm{if } M \delta \le 1, \\ \frac{1}{(M\delta)^{\theta -1}} & \textrm{if } M \delta >1, \end{array} \right. \label{diophantine3} \\
\sum_{m=1}^M \frac{1}{m^{\theta} \| m \alpha \|^{\theta}} &\asymp \log M \label{diophantine4}
\end{align}
with implied constants depending only on $\alpha$, $\theta$ and $\theta'$.
\end{lem}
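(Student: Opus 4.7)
The plan rests on two standard consequences of $\alpha$ being badly approximable, equivalently, of its regular continued fraction partial quotients being bounded by some constant $A = A(\alpha)$: first, the lower bound $\|m\alpha\| \ge c/m$ for all $m \ge 1$ with a constant $c = c(\alpha) > 0$; and second, the counting estimate $\#\{m \in [M, 2M] \, : \, \|m\alpha\| < y\} \ll My + 1$ for all $M \ge 1$ and $y > 0$, with implied constants depending only on $\alpha$. I will derive all four estimates from these two inputs by dyadic decomposition in $m$ and in $\|m\alpha\|$.

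For \eqref{diophantine1}, the Diophantine floor forces $m > c/\delta$, and decomposing $m \in [M, 2M]$ with $M \ge c/\delta$ and applying the counting estimate contributes $\ll \delta M^{1-\theta}$ per block; summing a convergent geometric series in $M$ yields $\ll \delta^\theta$ since $\theta > 1$. For \eqref{diophantine2}, I additionally split $\|m\alpha\| \in [2^k\delta, 2^{k+1}\delta]$ for $k \ge 0$ with $2^{k+1}\delta \le 1/2$; the preceding argument gives $\sum_{m : \|m\alpha\| \in [2^k\delta, 2^{k+1}\delta]} m^{-\theta} \ll (2^k\delta)^\theta$, and the outer sum $\sum_k (2^k\delta)^{\theta - \theta'}$ is a convergent geometric series with value $\ll \delta^{\theta - \theta'}$ since $\theta < \theta'$. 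Estimate \eqref{diophantine3} and the upper half of \eqref{diophantine4} come from a two-parameter dyadic decomposition $m \in [M_j, 2M_j]$, $\|m\alpha\| \in [2^\ell\delta, 2^{\ell+1}\delta]$, combined with the counting bound and the constraint $2^\ell\delta \gtrsim 1/M_j$ enforced by $\|m\alpha\| \ge c/m$; a case distinction on whether $M\delta \le 1$ or $M\delta > 1$ separates the logarithmic regime from the power-law regime, and specializing to $M = 1$ in the logarithmic regime yields the $O(\log M)$ upper bound in \eqref{diophantine4}.

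The matching lower bound in \eqref{diophantine4} requires a separate ingredient, namely the convergents $p_k/q_k$ of the regular continued fraction of $\alpha$. The standard identity $\|q_k\alpha\| = 1/(q_{k+1} + q_k\beta_{k+1})$ with $\beta_{k+1} = [0; a_{k+2}, a_{k+3}, \ldots] \in [0,1)$, together with $q_{k+1}/q_k \le A + 1$ from the bounded partial quotients, gives $q_k\|q_k\alpha\| \asymp 1$; the $q_k$ also grow geometrically, so there are $\asymp \log M$ convergents with $q_k \le M$, each contributing $\asymp 1$ to the sum. I expect the main technical burden to lie in the bookkeeping for \eqref{diophantine3}, where the hard floor $\|m\alpha\| \ge c/m$ and the soft cutoff $\|m\alpha\| \ge \delta$ must interact correctly through the two-parameter dyadic sum so that the dominant scale shifts cleanly across the threshold $M\delta = 1$ to produce both claimed bounds.
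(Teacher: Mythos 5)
Your proposal is correct and follows essentially the same strategy as the paper: both derive the bounds from the Diophantine floor $\|m\alpha\| \ge C/m$, a counting estimate for $\{m : \|m\alpha\| < y\}$ in dyadic blocks (which the paper obtains by pigeonhole from the spacing of the points $m\alpha$ mod $1$), and dyadic decomposition in $m$ and/or $\|m\alpha\|$. For the lower bound in \eqref{diophantine4} the paper invokes Dirichlet's approximation theorem directly while you use the continued fraction convergents, but these locate the same $\asymp \log M$ indices $q$ with $q\|q\alpha\| \asymp 1$ in geometrically spaced windows, so the difference is purely cosmetic.
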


\begin{proof} Let $C>0$ be a constant such that $\| q \alpha \| \ge C/q$ for all $q \in \mathbb{N}$.

We first prove \eqref{diophantine1}. Given any integer $m \ge 1$, for any two distinct integers $j_1, j_2 \in [1,m]$, we have $\| j_1 \alpha - j_2 \alpha \| \ge C/|j_1-j_2| \ge C/m$. An application of the pigeonhole principle shows that
\[ s_m =  \sum_{j=1}^m \mathds{1}_{\{ \| j \alpha \| < \delta \}} \le 1+\frac{2\delta m}{C} . \]
Observe also that $s_m=0$ for all $1 \le m \le C/\delta$. Summation by parts thus leads to the desired estimate
\[ \sum_{m=1}^{\infty} \frac{1}{m^{\theta}} \mathds{1}_{\{ \| m \alpha \| < \delta \}} = \sum_{m=1}^{\infty} \left( \frac{1}{m^{\theta}} - \frac{1}{(m+1)^{\theta}} \right) s_m \ll \sum_{m>C/\delta} \frac{1}{m^{\theta+1}}(1+\delta m) \ll \delta^{\theta} . \]

We now prove \eqref{diophantine2}. Given an integer $k \ge 0$, consider the points $m \alpha \pmod{\mathbb{Z}}$, $2^k \le m < 2^{k+1}$ on the circle $\mathbb{T}$, which we now identify with the interval $(-1/2,1/2]$. The interval $(-C/2^{k+1}, C/2^{k+1})$ does not contain any of these points, whereas each interval $[Cj/2^{k+1}, C(j+1)/2^{k+1})$, $j \ge 1$ and $(-C(j+1)/2^{k+1}, -Cj/2^{k+1}]$, $j \ge 1$ contains at most one such point. Therefore,
\[ \begin{split} \sum_{2^k \le m < 2^{k+1}} \frac{1}{m^{\theta} \| m \alpha \|^{\theta'}} \mathds{1}_{\{ \| m \alpha \| \ge \delta \}} &\le 2 \sum_{j=1}^{\infty} \frac{1}{2^{\theta k} (Cj/2^{k+1})^{\theta'}} \mathds{1}_{\{ C(j+1)/2^{k+1} \ge \delta \}} \ll 2^{(\theta' - \theta)k} \sum_{j \gg 2^k \delta} \frac{1}{j^{\theta'}} \\ &\ll \left\{ \begin{array}{cc} 2^{(\theta'-\theta)k} & \textrm{if } 2^k \delta \le 1, \\ 2^{(1-\theta)k} \delta^{1-\theta'} & \textrm{if } 2^k \delta >1 , \end{array} \right. \end{split} \]
and
\[ \sum_{m=1}^{\infty} \frac{1}{m^{\theta} \| m \alpha \|^{\theta'}} \mathds{1}_{\{ \| m \alpha \| \ge \delta \}} \ll \sum_{2^k \le 1/\delta} 2^{(\theta' - \theta)k} + \sum_{2^k>1/\delta} 2^{(1-\theta)k} \delta^{1-\theta'} \ll \frac{1}{\delta^{\theta'-\theta}}, \]
as claimed.

To see \eqref{diophantine3}, we similarly deduce
\[ \begin{split} \sum_{2^k \le m < 2^{k+1}} \frac{1}{m^{\theta} \| m \alpha \|^{\theta}} \mathds{1}_{\{ \| m \alpha \| \ge \delta \}} &\le 2 \sum_{j=1}^{\infty} \frac{1}{2^{\theta k} (Cj/2^{k+1})^{\theta}} \mathds{1}_{\{ C(j+1)/2^{k+1} \ge \delta \}} \ll \sum_{j \gg 2^k \delta} \frac{1}{j^{\theta}} \\ &\ll \left\{ \begin{array}{cc} 1 & \textrm{if } 2^k \delta \le 1, \\ 2^{(1-\theta)k} \delta^{1-\theta} & \textrm{if } 2^k \delta>1, \end{array} \right. \end{split} \]
and then sum over all $k \ge 0$ such that $2^{k+1}>M$.

Finally, we prove \eqref{diophantine4}. We similarly deduce
\begin{equation}\label{mthetamalphatheta}
\sum_{2^k \le m < 2^{k+1}} \frac{1}{m^{\theta} \| m \alpha \|^{\theta}} \le 2 \sum_{j=1}^{\infty} \frac{1}{2^{\theta k} (Cj/2^{k+1})^{\theta}} \ll \sum_{j=1}^{\infty} \frac{1}{j^{\theta}} \ll 1.
\end{equation}
Summing over all $0 \le k \le (\log M)/\log 2$ proves the upper bound in \eqref{diophantine4}. To see the lower bound, recall that Dirichlet's approximation theorem \cite[p.\ 41]{KH} states that for any real number $Q \ge 1$ there exists an integer $1 \le q \le Q$ such that $\| q \alpha \|<1/Q$. Since $\alpha$ is badly approximable, this integer also satisfies $q>CQ$. In particular, for any integer $k \ge 0$, there exists an integer $q \in (1/C^k, 1/C^{k+1}]$ such that $\| q \alpha \| < C^{k+1}$, and consequently $q \| q \alpha \| <1$. There are $\gg \log M$ pairwise disjoint intervals $(1/C^k, 1/C^{k+1}]$ contained in the interval $[1,M]$, hence there are $\gg \log M$ terms $m \in [1,M]$ for which $m^{-\theta} \| m \alpha \|^{-\theta} > 1$. This proves the lower bound in \eqref{diophantine4}.
\end{proof}

\begin{lem}\label{diophantinelemma2} Let $\alpha$ be a badly approximable number. For any integer $N \ge 3$,
\[ \sum_{\substack{m_1, m_2 =1 \\ m_1 \neq m_2}}^{\infty} \frac{1}{m_1^2 m_2^2} \min \left\{ \frac{1}{\| m_1 \alpha \|^2}, N \right\} \min \left\{ \frac{1}{\| m_2 \alpha \|^2}, N \right\} \min \left\{ \frac{1}{\| (m_1+m_2) \alpha \|^2}, N \right\} \ll N (\log \log N)^2 \]
with an implied constant depending only on $\alpha$. The same holds with $m_1+m_2$ replaced by $m_1-m_2$.
\end{lem}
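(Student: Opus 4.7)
The plan is to decompose the triple sum dyadically based on the sizes of $\|m_1\alpha\|$, $\|m_2\alpha\|$, $\|(m_1+m_2)\alpha\|$, exploit the triangle inequality constraint linking them, and apply Lemma \ref{diophantinelemma1}. As a first step, I would separate each factor $\min\{1/\|m\alpha\|^2, N\}$ into the uncapped branch $1/\|m\alpha\|^2$, valid when $\|m\alpha\| \ge 1/\sqrt{N}$, and the capped branch $N$, valid when $\|m\alpha\| < 1/\sqrt{N}$. This produces eight sub-sums. The ones in which the capped branch is used for at least one factor are the easier ones: by \eqref{diophantine1} applied with $\theta = 2$ and $\delta = 1/\sqrt{N}$, the exceptional set $\{m : \|m\alpha\| < 1/\sqrt{N}\}$ satisfies $\sum 1/m^2 \ll 1/N$, which absorbs the factor $N$ coming from the cap, and the remaining uncapped factors are bounded via \eqref{diophantine3} or \eqref{diophantine4}.

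In the main sub-sum, where all three factors take the form $1/\|\cdot\alpha\|^2$, I would dyadically split $\|m_i\alpha\| \asymp 2^{-k_i}$ and $\|(m_1+m_2)\alpha\| \asymp 2^{-k_3}$. The triangle inequality $\|(m_1+m_2)\alpha\| \le \|m_1\alpha\| + \|m_2\alpha\|$ together with its two cyclic analogues implies that the two smallest among $k_1, k_2, k_3$ must agree up to $O(1)$. I would split into three symmetric subcases according to whether the largest index (the deepest resonance) belongs to $m_1$, $m_2$, or $m_1 + m_2$. In the first two subcases, the third $\|\cdot\alpha\|$-factor is automatically comparable to the middle one, the sum essentially factorises into a product of two one-dimensional sums controlled by Lemma \ref{diophantinelemma1}, each contributing $O(\log N)$, and the truncation by the cap $N$ keeps the total contribution at $O(N)$.

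The main obstacle will be the remaining subcase, in which $\|(m_1+m_2)\alpha\|$ is the smallest --- an ``accidental cancellation'' where $m_1 \alpha + m_2 \alpha$ is nearer to $\mathbb{Z}$ than either $m_i \alpha$ individually. Fixing $m_1$ with $\|m_1 \alpha\| \asymp 2^{-k}$, the constraint $\|(m_1+m_2)\alpha\| \asymp 2^{-K}$ with $K > k$ forces $m_2 \alpha \pmod 1$ into an interval of length $\asymp 2^{-K}$ around $-m_1 \alpha$, and the badly approximable property of $\alpha$ pushes the smallest feasible $m_2$ up to order $2^K$, so that $\sum 1/m_2^2 \ll 2^{-2K}$ by the dyadic counting argument already used in the proof of Lemma \ref{diophantinelemma1}. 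Summing over $m_1$, $k$, and $K$ together with the analogous contributions from the other subcases yields the stated bound, the $(\log \log N)^2$ factor arising from two iterated harmonic-type summations over the $O(\log N)$ relevant dyadic scales. The variant with $m_1 - m_2$ in place of $m_1 + m_2$ is treated identically, since $\|(m_1 - m_2) \alpha\| \le \|m_1 \alpha\| + \|m_2 \alpha\|$ gives the same triangle structure.
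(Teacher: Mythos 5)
Your overall framework — capped versus uncapped factors, dyadic scales in $\|\cdot\alpha\|$, and the triangle-inequality observation that the two largest of the three $\|\cdot\alpha\|$'s must be comparable — is sound, and differs genuinely from the paper's organisation. The paper avoids a full dyadic decomposition; after disposing of $\|m_i\alpha\|<1/\sqrt{N}$ and of $\|m_2\alpha\|\ge 2\|m_1\alpha\|$ (and its mirror) via nested applications of \eqref{diophantine2}, it restricts to the set $H$ where $\|m_1\alpha\|\asymp\|m_2\alpha\|$, and then splits \emph{by the sizes of $m_1,m_2$} relative to the thresholds $\sqrt{N}/\log N$ and $\sqrt{N}\log N$; the two $\log\log N$ factors come directly from \eqref{diophantine3} applied with $M\asymp\sqrt{N}/\log N$.

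Where your write-up does not yet constitute a proof is in the estimates you claim for the subcases. For the ``first two subcases'' (deepest resonance at $m_1$ or $m_2$) you assert that the sum ``factorises into a product of two one-dimensional sums controlled by Lemma \ref{diophantinelemma1}, each contributing $O(\log N)$, and the truncation by the cap $N$ keeps the total contribution at $O(N)$'' --- but this arithmetic does not close: two $O(\log N)$ sums times a cap $N$ is $O(N(\log N)^2)$, not $O(N)$. The correct argument, which both the paper and a careful dyadic version need, is a \emph{nested} bound: once $\|m_2\alpha\|\ge 2\|m_1\alpha\|$ (say), the inner sum $\sum_{m_2}1/(m_2^2\|m_2\alpha\|^4)$ is restricted to $\|m_2\alpha\|\ge 2\|m_1\alpha\|$, and \eqref{diophantine2} with $\delta=2\|m_1\alpha\|$ shows this inner sum is $\ll 1/\|m_1\alpha\|^2$, which is then absorbed into the outer sum; an unrestricted factorisation loses a logarithm. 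For your ``remaining subcase'' (deepest at $m_1+m_2$), the claim that fixing $m_1$ with $\|m_1\alpha\|\asymp 2^{-k}$ and imposing $\|(m_1+m_2)\alpha\|\asymp 2^{-K}$ forces $m_2\gg 2^K$ is only valid when $m_1\ll 2^K$: for $m_1\gg 2^K$, the smallest admissible $m_2$ can be as small as $\asymp 2^k$, so $\sum 1/m_2^2\ll 2^{-2K}$ fails there and the regime $m_1\gg 2^K$ must be handled separately (using instead $1/m_1^2\ll 2^{-2K}$ and the gap structure of $\{m_1:\|m_1\alpha\|\asymp 2^{-k}\}$). Finally, the assertion that ``the $(\log\log N)^2$ factor aris[es] from two iterated harmonic-type summations over the $O(\log N)$ relevant dyadic scales'' is not substantiated and does not match what the dyadic bookkeeping actually produces; in fact, if you carry out the nested dyadic estimates correctly the total comes out $O(N)$ with no $\log\log$ at all (the lemma's bound is not sharp), so the intermediate steps you sketch cannot both be right and yield $N(\log\log N)^2$. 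In short, the decomposition idea is workable and arguably cleaner than the paper's, but the case-by-case bounds need the nested coupling between $m_1$ and $m_2$ made explicit; as written they are either incorrect or unjustified.
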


\begin{proof} Let
\[ A^{\pm}(m_1, m_2) = \frac{1}{m_1^2 m_2^2} \min \left\{ \frac{1}{\| m_1 \alpha \|^2}, N \right\} \min \left\{ \frac{1}{\| m_2 \alpha \|^2}, N \right\} \min \left\{ \frac{1}{\| (m_1 \pm m_2) \alpha \|^2}, N \right\} . \]
Note that $\| m_1 \alpha \| \ge 2 \| m_2 \alpha \|$ implies $\| (m_1 \pm m_2) \alpha \| \ge \| m_1 \alpha \| - \| m_2 \alpha \| \ge \| m_1 \alpha \|/2$. Similarly, $\| m_2 \alpha \| \ge 2 \| m_1 \alpha \|$ implies $\| (m_1 \pm m_2) \alpha \| \ge \| m_2 \alpha \| - \| m_1 \alpha \| \ge \| m_2 \alpha \|/2$.

Estimates \eqref{diophantine1} and \eqref{diophantine2} in Lemma \ref{diophantinelemma1} with $\delta \asymp 1/\sqrt{N}$, $\theta=2$ and $\theta'=4$ yield
\[ \begin{split} \sum_{\substack{m_1, m_2=1 \\ m_1 \neq m_2 \\ \| m_1 \alpha \| < 1/\sqrt{N}, \,\, \| m_2 \alpha \|< 2/\sqrt{N}}}^{\infty} A^{\pm}(m_1, m_2) &\le \sum_{\substack{m_1 =1 \\ \| m_1 \alpha \| < 1/\sqrt{N}}}^{\infty} \frac{1}{m_1^2} \sum_{\substack{m_2 =1 \\ \| m_2 \alpha \| < 2/\sqrt{N}}}^{\infty} \frac{N^3}{m_2^2} \ll N, \\ \sum_{\substack{m_1, m_2=1 \\ \| m_1 \alpha \| < 1/\sqrt{N}, \,\, \| m_2 \alpha \| \ge 2/\sqrt{N}}}^{\infty} A^{\pm}(m_1, m_2) &\le \sum_{\substack{m_1=1 \\ \| m_1 \alpha \| < 1/\sqrt{N}}}^{\infty} \frac{1}{m_1^2} \sum_{\substack{m_2 =1 \\ \| m_2 \alpha \| \ge 2/\sqrt{N}}}^{\infty} \frac{4N}{m_2^2 \| m_2 \alpha \|^4} \ll N. \end{split} \]
The contribution of all terms with $\| m_1 \alpha \|<1/\sqrt{N}$ is thus $\ll N$. By symmetry, the contribution of all terms with $\| m_2 \alpha \|<1/\sqrt{N}$ is also $\ll N$. Two applications of \eqref{diophantine2} in Lemma \ref{diophantinelemma1} with $\theta=2$ and $\theta'=4$, first with $\delta = 2\| m_1 \alpha \|$ and then with $\delta = 1/\sqrt{N}$ show that
\[ \begin{split} \sum_{\substack{m_1, m_2=1 \\ \| m_1 \alpha \| \ge 1/\sqrt{N}, \,\, \| m_2 \alpha \| \ge 1/\sqrt{N} \\ \| m_2 \alpha \| \ge 2 \| m_1 \alpha \|}}^{\infty} A^{\pm} (m_1, m_2) &\le \sum_{\substack{m_1=1 \\ \| m_1 \alpha \| \ge 1/\sqrt{N}}}^{\infty} \frac{1}{m_1^2 \| m_1 \alpha \|^2} \sum_{\substack{m_2=1 \\ \| m_2 \alpha \| \ge 2 \| m_1 \alpha \|}}^{\infty} \frac{4}{m_2^2 \| m_2 \alpha \|^4} \\ &\ll \sum_{\substack{m_1=1 \\ \| m_1 \alpha \| \ge 1/\sqrt{N}}}^{\infty} \frac{1}{m_1^2 \| m_1 \alpha \|^4} \ll N . \end{split} \]
By symmetry, the contribution of all terms with $\| m_1 \alpha \| \ge 1/\sqrt{N}$, $\| m_2 \alpha \| \ge 1/\sqrt{N}$ and $\| m_1 \alpha \| \ge 2 \| m_2 \alpha \|$ is also $\ll N$.

It remains to estimate the sum over the set
\[ H = \left\{ (m_1, m_2) \in \mathbb{N}^2 : m_1 \neq m_2, \,\, \| m_1 \alpha \| \ge 1/\sqrt{N}, \,\, \| m_2 \alpha \| \ge 1/\sqrt{N}, \,\, \frac{1}{2} < \frac{\| m_1 \alpha \|}{\| m_2 \alpha \|} < 2 \right\} . \]
If $m_1, m_2 \le \sqrt{N}/\log N$, then $|m_1 \pm m_2| \le 2\sqrt{N}/\log N$, consequently $\| (m_1 \pm m_2) \alpha \|\gg (\log N)/\sqrt{N}$. An application of \eqref{diophantine4} in Lemma \ref{diophantinelemma1} with $\theta=2$ thus gives
\[ \sum_{\substack{(m_1, m_2) \in H \\ m_1, m_2 \le \sqrt{N}/\log N}} A^{\pm} (m_1, m_2) \ll \sum_{m_1 \le \sqrt{N}/\log N} \frac{1}{m_1^2 \| m_1 \alpha \|^2} \sum_{m_2 \le \sqrt{N}/\log N} \frac{1}{m_2^2 \| m_2 \alpha \|^2} \cdot \frac{N}{(\log N)^2} \ll N . \]
To estimate the terms with $m_1 \le \sqrt{N}/\log N$ and $\sqrt{N}/\log N < m_2 \le \sqrt{N}\log N$, we use the fact that $\| m_1 \alpha \| \asymp \| m_2 \alpha \|$ for $(m_1, m_2) \in H$. Applying first \eqref{diophantine1} in Lemma \ref{diophantinelemma1} with $\delta = 2 \| m_2 \alpha \|$, and then \eqref{mthetamalphatheta} (together with the fact that the interval $[\sqrt{N}/\log N, \sqrt{N}\log N]$ can be covered by $\ll \log \log N$ intervals of the form $[2^k, 2^{k+1}]$) yields
\[ \begin{split} \sum_{\substack{(m_1, m_2) \in H \\ m_1 \le \sqrt{N}/\log N \\ \sqrt{N}/\log N < m_2 \le \sqrt{N} \log N}} A^{\pm} (m_1, m_2) &\le \sum_{\sqrt{N} / \log N < m_2 \le \sqrt{N} \log N} \frac{4}{m_2^2 \| m_2 \alpha \|^4} \sum_{\substack{m_1=1 \\ \| m_1 \alpha \| < 2 \| m_2 \alpha \|}}^{\infty} \frac{N}{m_1^2} \\ &\ll N \sum_{\sqrt{N} / \log N < m_2 \le \sqrt{N} \log N} \frac{1}{m_2^2 \| m_2 \alpha \|^2} \ll N \log \log N . \end{split} \]
Estimate \eqref{diophantine3} in Lemma \ref{diophantinelemma1} with $\theta=2$, $\delta = 1/\sqrt{N}$ and $M= \lceil \sqrt{N} \log N \rceil$, and estimate \eqref{diophantine4} with $\theta=2$ lead to
\[ \sum_{\substack{(m_1, m_2) \in H \\ m_1 \le \sqrt{N}/\log N \\ m_2 > \sqrt{N} \log N}} A^{\pm} (m_1, m_2) \le \sum_{m_1 \le \sqrt{N}/\log N} \frac{1}{m_1^2 \| m_1 \alpha \|^2} \sum_{\substack{m_2 > \sqrt{N} \log N \\ \| m_2 \alpha \| \ge 1/\sqrt{N}}} \frac{N}{m_2^2 \| m_2 \alpha \|^2} \ll N . \]
Adding the previous three formulas shows that the sum of all terms $(m_1, m_2) \in H$ with $m_1 \le \sqrt{N}/\log N$ is $\ll N \log \log N$. By symmetry, the sum of all terms $(m_1, m_2) \in H$ with $m_2 \le \sqrt{N}/\log N$ is also $\ll N \log \log N$. Finally, \eqref{diophantine3} in Lemma \ref{diophantinelemma1} with $\theta=2$, $\delta=1/\sqrt{N}$ and $M = \lceil \sqrt{N}/\log N \rceil$ gives
\[ \sum_{\substack{(m_1, m_2) \in H \\ m_1, m_2 > \sqrt{N}/\log N}} A^{\pm} (m_1, m_2) \le \sum_{\substack{m_1 > \sqrt{N}/\log N \\ \| m_1 \alpha \| \ge 1/\sqrt{N}}} \frac{1}{m_1^2 \| m_1 \alpha \|^2} \sum_{\substack{m_2 > \sqrt{N}/\log N \\ \| m_2 \alpha \| \ge 1/\sqrt{N}}} \frac{N}{m_2^2 \| m_2 \alpha \|^2} \ll N (\log \log N)^2 . \]
Therefore, $\sum_{(m_1, m_2) \in H} A^{\pm}(m_1, m_2) \ll N (\log \log N)^2$, which concludes the proof.
\end{proof} 

\subsection{The moments of an exponential sum}

The following lemma improves \cite[Proposition 2.2]{BB3}. In fact, we will only need it for $p=2$.
\begin{lem}\label{momentlemma} Assume $X_1$ is nondegenerate. There exists a positive integer $A$ such that for any $x \in \mathbb{R}$ with $Ax \not\in \mathbb{Z}$ and any positive integer $p$,
\[ \left| \mathbb{E} \left( \left| \sum_{n=1}^N e^{2 \pi i S_n x} \right|^{2p} \right) - p! N^p \left( \frac{1-|\varphi (2 \pi x)|^2}{|1-\varphi (2 \pi x)|^2} \right)^p \right| \ll \frac{N}{\| Ax \|^{4p-2}} + \frac{N^{p-1}}{\| Ax \|^{2p+2}} \]
with an implied constant depending only on $p$ and the distribution of $X_1$.
\end{lem}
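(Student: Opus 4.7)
My plan is to expand the $2p$-th moment directly and compare it to the Wick-type approximation $p!(\mathbb{E}|T_N|^2)^p$, where I write $T_N = \sum_{n=1}^N e^{2\pi i S_n x}$ for brevity. Starting from
\[ \mathbb{E}\Biggl(\biggl|\sum_{n=1}^N e^{2\pi i S_n x}\biggr|^{2p}\Biggr) = \sum_{\vec n, \vec m \in \{1,\ldots,N\}^p} \mathbb{E}\bigl(e^{2\pi i x (S_{n_1}+\cdots+S_{n_p}-S_{m_1}-\cdots-S_{m_p})}\bigr), \]
I would sort the $2p$ indices as $t_1 \le t_2 \le \cdots \le t_{2p}$ with signs $\epsilon_k \in \{\pm 1\}$ (positive for $n$-type, negative for $m$-type) and set $s_k = \epsilon_1 + \cdots + \epsilon_k$, so that $s_0 = s_{2p} = 0$. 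Independence of the $X_k$'s combined with the telescoping identity $\sum_i S_{n_i} - \sum_j S_{m_j} = -\sum_{k=1}^{2p-1} s_k (S_{t_{k+1}} - S_{t_k})$ factors the characteristic function as $\prod_{k=1}^{2p-1} \varphi(-2\pi s_k x)^{t_{k+1}-t_k}$.

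The comparison quantity $p!(\mathbb{E}|T_N|^2)^p$ can be rewritten as $\sum_{\pi\in S_p}\sum_{\vec n,\vec m}\prod_i\mathbb{E}(e^{2\pi i x(S_{n_i}-S_{m_{\pi(i)}})})$, and I intend to identify it with the contribution of the \emph{pairing} sign patterns: the $2^p$ sequences with $s_{2j}=0$ for every $j=1,\ldots,p-1$, equivalently the consecutive positions $(2j-1,2j)$ carrying opposite signs. For such patterns the product factorises into $p$ independent geometric factors $\varphi(\mp 2\pi x)^{a_j}$ with $a_j=t_{2j}-t_{2j-1}$; combining with the $(p!)^2$ orderings of the $n$- and $m$-indices and with the diagonal sub-configurations where some $a_j=0$ (corresponding to $n_i=m_{\pi(i)}$), the pairing contribution collapses exactly into the above Wick-type sum. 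A routine geometric computation for $p=1$ gives $\mathbb{E}|T_N|^2 = N(1-|\varphi|^2)/|1-\varphi|^2 + O(1/|1-\varphi|^2)$; raising to the $p$-th power via the binomial theorem then produces the claimed main term $p!N^p((1-|\varphi|^2)/|1-\varphi|^2)^p$ plus binomial cross-terms that are absorbed into the $N^{p-1}/\|Ax\|^{2p+2}$ error.

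The residual contribution—the first error term $N/\|Ax\|^{4p-2}$—comes from the non-pairing sign sequences, where $s_{2j}\neq 0$ for some $j$. For these, some factors $|\varphi(-2\pi s_k x)|^{t_{k+1}-t_k}$ decay geometrically and each corresponding gap summation is bounded by $1/|1-\varphi(-2\pi s_k x)|$. Choosing the integer $A$ as an appropriate period of the distribution of $X_1$—essentially the greatest common divisor of the differences within its support—ensures that $Ax\notin\mathbb{Z}$ forces a lower bound of the form $|1-\varphi(2\pi sx)|\gtrsim\|Ax\|^c$ for the finitely many nonzero integers $|s|\le p$ that can appear as $s_k$, converting each decaying factor into a negative power of $\|Ax\|$; any remaining free gap contributes at most a factor $N$. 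The main obstacle will be the combinatorial case analysis: for every non-pairing sign pattern one must balance the number of decaying factors against the number of free summation variables $t_{k+1}-t_k$ so that the total exponent of $\|Ax\|^{-1}$ comes out to exactly $4p-2$. A secondary obstacle is the treatment of configurations with coincident $t_k$'s, where the sorted representation is no longer unique; each coincidence trades a free summation variable for more intricate combinatorics, and their aggregated contribution fits into the $N^{p-1}/\|Ax\|^{2p+2}$ error.
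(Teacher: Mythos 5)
Your plan differs from the paper's route: you propose a from-scratch moment expansion, sorting the indices, classifying terms by their sign pattern, and matching the ``pairing'' contribution against the Wick-type sum $p!(\mathbb{E}|T_N|^2)^p$. The paper instead invokes Proposition 2.2 of \cite{BB3} and describes a specific sharpening of one counting step therein; the error in that proposition is expressed through a single aggregate quantity $R$, later bounded by $\ll 1/\|Ax\|^2$ via the explicit choice $A=4|n_1-n_2|$. The two gaps below are why the from-scratch approach does not go through as written.

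First, the claim that ``the pairing contribution collapses exactly into the Wick-type sum'' is false. Already for $p=2$ and a physical arrangement $n_1<m_1<n_2<m_2$, the Wick sum $2(\mathbb{E}|T_N|^2)^2$ picks up both $\varphi(-2\pi x)^{m_1-n_1}\varphi(-2\pi x)^{m_2-n_2}$ and the \emph{crossed} term $\varphi(-2\pi x)^{m_2-n_1}\varphi(2\pi x)^{n_2-m_1}$ (the latter from multi-indices whose implicit Wick pairing $n_i\leftrightarrow m_i$ is not the sorted-order pairing), while only the former appears in $\mathbb{E}|T_N|^4$. The overcount decays and is absorbable in the stated error bound, but it must be bounded explicitly; you don't address it, and as stated the identification is an overcount by precisely these crossed terms.

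Second, and more seriously, your error analysis rests on a lower bound $|1-\varphi(2\pi sx)|\gtrsim\|Ax\|^c$ for all nonzero integers $|s|\le p$, which cannot hold with a $p$-independent $A$. If $L=\gcd(\mathrm{supp}(X_1))$ and $x=1/(sL)$ for a positive integer $s$ with $s\nmid A/L$, then $\varphi(2\pi sx)=1$ exactly while $Ax\notin\mathbb{Z}$, so the partial sum $s_k=s$ produces a \emph{free} gap whose geometric series contributes a full factor of $N$ that your bookkeeping does not allow for. Since the lemma requires a single $A$ valid for every $p$, there is no way to choose $A$ so that all levels $|s|\le p$ are controlled by $\|Ax\|$. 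This is exactly what the reliance on \cite[Proposition 2.2]{BB3} sidesteps: the bound there is stated in terms of the quantity $R=16/(\mathbb{E}\|4x(X_1-X_2)\|)^2$, which encodes only the $|s|=1$ decay, and the combinatorics in \cite{BB3} absorb the intermediate levels without ever needing a pointwise lower bound on $|1-\varphi(2\pi s x)|$ for $|s|\ge 2$. Without replicating that more delicate counting, your plan does not produce the stated error exponents.
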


\begin{proof} Since $X_1$ is nondegenerate, there exist distinct integers $n_1, n_2 \in \mathrm{supp} (X_1)$. We will show that $A=4|n_1-n_2|$ satisfies the claim. Let
\[ R=\frac{16}{(\mathbb{E} \left( \| 4 x (X_1-X_2) \|)^2 \right)} . \]
Since $\Pr (X_1-X_2 = |n_1-n_2|)>0$, we have $\mathbb{E} (\| 4x(X_1-X_2) \|) \gg \| Ax \|$, consequently $R \ll 1/\| Ax \|^2$.

In \cite[Proposition 2.2]{BB3} it was proved that
\[ \left| \mathbb{E} \left( \left| \sum_{n=1}^N e^{2 \pi i S_n x} \right|^{2p} \right) - (p!)^2 \binom{N}{p} \left( \frac{1-|\varphi (2 \pi x)|^2}{|1-\varphi (2 \pi x)|^2} \right)^p \right| \le (2pR)^{2p} \max_{0<q<p} \frac{q^{2p-q}N^q}{q! R^{q-1}} + (pR)^{p+1} N^{p-1} . \]
We point out the necessary modifications to improve it to
\begin{equation}\label{modifiedprop2.2}
\left| \mathbb{E} \left( \left| \sum_{n=1}^N e^{2 \pi i S_n x} \right|^{2p} \right) - (p!)^2 \binom{N}{p} \left( \frac{1-|\varphi (2 \pi x)|^2}{|1-\varphi (2 \pi x)|^2} \right)^p \right| \le (2pR)^{2p} \max_{0<q<p} \frac{q^{2p-q-1}N^q}{q! R^q} + (pR)^{p+1} N^{p-1} .
\end{equation}
The proof remains the same up to \cite[Equation (2.14)]{BB3}. With the notation of that paper, at that point in the proof we have given integers $1 \le M <p$ and $1 \le s \le 2p$, and given indices $1=j_1<j_2<\cdots<j_M \le s$. The next step in the proof is to estimate the number of integers $k_1<k_2<\cdots<k_s$ for which $k_{j_1}, \ldots, k_{j_M}$ is fixed, and
\begin{equation}\label{kjequation}
(k_{j_2 -1} - k_{j_1}) + (k_{j_3 -1} - k_{j_2}) + \cdots + (k_{j_M-1} - k_{j_{M-1}}) + (k_s-k_{j_M}) =k.
\end{equation}
In \cite{BB3} this is done by observing that the $s-M$ integers $k_j$, $j \neq j_1, \ldots, j_M$ all lie in the set
\begin{equation}\label{kjintervals}
[k_{j_1}+1, k_{j_1}+k] \cup \cdots \cup [k_{j_M}+1, k_{j_M}+k]
\end{equation}
of cardinality at most $Mk$, hence the number of $k_1<k_2<\cdots<k_s$ satisfying the given constraints is $\le \binom{Mk}{s-M}$. This estimate can in fact be sharpened to $\le \binom{Mk}{s-M-1}$ if $M<s$, and to $\le 1$ if $M=s$ as follows.

The case $M=s$ is trivial, because then all of $k_1, \ldots, k_s$ are fixed. If $M<s$ and $j_M \neq s$, then the $s-M-1$ integers $k_j$, $j \neq j_1, \ldots, j_M,s$ all lie in the set \eqref{kjintervals} of cardinality at most $Mk$, hence the number of possible $k_1<\cdots <k_{s-1}$ is $\le \binom{Mk}{s-M-1}$. For all possible $k_1<\cdots <k_{s-1}$, there is at most one $k_s$ that satisfies \eqref{kjequation}. If $M<s$ and $j_M=s$, then let $j^* \in \{ j_2, \ldots, j_M \}$ be such that $j^*-1 \not\in \{ j_1, \ldots, j_M \}$. Again, the $s-M-1$ integers $k_j$, $j \neq j_1, \ldots, j_M, j^*-1$ all lie in the set \eqref{kjintervals} of cardinality at most $Mk$, hence the number of possible $k_j$, $j \neq j^*-1$ is at most $\binom{Mk}{s-M-1}$. For all possible $k_j$, $j \neq j^*-1$, there is at most one $k_{j^*-1}$ that satisfies \eqref{kjequation}. This proves the estimate $\le \binom{Mk}{s-M-1}$ for the number of $k_1<\cdots<k_s$ satisfying the given constraints.

Most importantly, the exponent of $k$ in $\binom{Mk}{s-M} \ll k^{s-M}$ is improved by 1 to $\binom{Mk}{s-M-1} \ll k^{s-M-1}$. The rest of the proof is identical to the one in \cite{BB3}. Using the improved estimate $\binom{Mk}{s-M-1}$ resp.\ 1 instead of $\binom{Mk}{s-M}$ leads to the improvement \eqref{modifiedprop2.2}.

Up to an implied constant depending on $p$, the maximum over $0<q<p$ in \eqref{modifiedprop2.2} is attained either at $q=1$ or at $q=p-1$. Using also $R \ll 1/\| Ax \|^2$, we thus deduce
\begin{equation}\label{NAxupperbound}
\left| \mathbb{E} \left( \left| \sum_{n=1}^N e^{2 \pi i S_n x} \right|^{2p} \right) - (p!)^2 \binom{N}{p} \left( \frac{1-|\varphi (2 \pi x)|^2}{|1-\varphi (2 \pi x)|^2} \right)^p \right| \ll \frac{N}{\| Ax \|^{4p-2}} + \frac{N^{p-1}}{\| Ax \|^{2p+2}} .
\end{equation}

Let $L=\mathrm{gcd}(\mathrm{supp}(X_1))$. In particular, $X_1$ only attains integer multiples of $L$ with positive probability. Clearly, $\varphi (2 \pi x)$ is a continuous, $1/L$-periodic function, and we have $\varphi (2 \pi x) =1$ if and only if $x$ is an integer multiple of $1/L$. As the characteristic function of a nondegenerate random variable, $\varphi (2 \pi x)$ satisfies the estimate $|1-\varphi (2 \pi x)| \gg x^2$ in an open neighborhood of $x=0$, see \cite[p.\ 14]{PE}. By continuity and periodicity, this immediately implies
\begin{equation}\label{varphiestimate}
|1-\varphi (2 \pi x)| \gg \| Lx \|^2 \qquad \textrm{uniformly in } x \in \mathbb{R} .
\end{equation}
Together with the identity
\begin{equation}\label{varphisumidentity}
\frac{1-|\varphi (2 \pi x)|^2}{|1-\varphi (2 \pi x)|^2} = 1 + \frac{\varphi (2 \pi x)}{1-\varphi (2 \pi x)} + \frac{\overline{\varphi(2 \pi x)}}{1-\overline{\varphi(2 \pi x)}},
\end{equation}
where the overline denotes the complex conjugate, this leads to
\begin{equation}\label{varphifractionestimate}
\frac{1-|\varphi (2 \pi x)|^2}{|1-\varphi (2 \pi x)|^2} \ll \frac{1}{|1-\varphi (2 \pi x)|} \ll \frac{1}{\| Lx \|^2} .
\end{equation}
Clearly, $L \mid A$, hence $\| Ax \| \le (A/L) \| Lx \|$. Since $(p!)^2 \binom{N}{p} = p! N^p + O(N^{p-1})$, the error of replacing $p!^2 \binom{N}{p}$ by $p! N^p$ in \eqref{NAxupperbound} is $\ll N^{p-1}/\| L x \|^{2p} \ll N^{p-1}/\| Ax \|^{2p}$, thus concluding the proof.
\end{proof}

\subsection{Proof of Theorem \ref{badlyapproxtheorem}}

\begin{proof}[Proof of Theorem \ref{badlyapproxtheorem}] Let $L=\mathrm{gcd}(\mathrm{supp}(X_1))$. Replacing $X_1, X_2, \ldots$ by $X_1/L, X_2/L, \ldots$ and $\alpha$ by $L \alpha$ changes neither $S_n \alpha$ nor $\varphi (2 \pi m \alpha)$. We may thus assume that $L=1$. Let $C>0$ be a constant such that $\| q \alpha \| \ge C/q$ for all $q \in \mathbb{N}$.

For the sake of readability, set $W_N = W_{\mathbb{T},2}^2 (\mu_N, \lambda)$. Formula \eqref{WT2explicit} expresses $W_N$ in terms of exponential sums as
\begin{equation}\label{WNexplicit}
W_N = \sum_{m=1}^{\infty} \frac{|\hat{\mu}_N(m)|^2}{2 \pi^2 m^2} = \sum_{m=1}^{\infty} \frac{1}{2 \pi^2 m^2} \left| \frac{1}{N} \sum_{n=1}^N e^{2 \pi i m S_n \alpha} \right|^2 .
\end{equation}

We first estimate the expected value $\mathbb{E} (W_N)$. An application of estimate \eqref{diophantine1} in Lemma \ref{diophantinelemma1} with $\delta=C/\sqrt{N}$ and $\theta=2$ shows that in the explicit formula \eqref{WNexplicit}, the contribution of the terms with $\| m \alpha \| < C/\sqrt{N}$ is negligible:
\[ \sum_{\substack{m=1 \\ \| m \alpha \|<C/\sqrt{N}}}^{\infty} \frac{1}{2 \pi^2 m^2} \left| \frac{1}{N} \sum_{n=1}^N e^{2 \pi i m S_n \alpha} \right|^2 \le \sum_{\substack{m=1 \\ \| m \alpha \|<C/\sqrt{N}}}^{\infty} \frac{1}{m^2} \ll \frac{1}{N} . \]
Therefore,
\begin{equation}\label{EWN1}
\mathbb{E} (W_N) = \sum_{\substack{m=1 \\ \| m \alpha \| \ge C/\sqrt{N}}}^{\infty} \frac{1}{2 \pi^2 m^2} \mathbb{E} \left( \left| \frac{1}{N} \sum_{n=1}^N e^{2 \pi i m S_n \alpha} \right|^2 \right) + O \left( \frac{1}{N} \right) .
\end{equation}
Expanding the square leads to
\begin{equation}\label{Eexponentialsum}
\begin{split} \mathbb{E} \left( \left| \frac{1}{N} \sum_{n=1}^N e^{2 \pi i m S_n \alpha} \right|^2 \right) &= \frac{1}{N^2} \sum_{n_1, n_2=1}^N \mathbb{E} \left( e^{2 \pi i m (S_{n_1} - S_{n_2}) \alpha} \right) \\ &=\frac{1}{N} + \frac{1}{N^2} \sum_{1 \le n_2 < n_1 \le N} \varphi (2 \pi m \alpha)^{n_1-n_2} + \frac{1}{N} \sum_{1 \le n_1<n_2 \le N} \varphi (-2 \pi m \alpha)^{n_2-n_1} \\ &= \frac{1}{N} \cdot \frac{1-|\varphi (2 \pi m \alpha)|^2}{|1-\varphi (2 \pi m \alpha)|^2} - \frac{2}{N^2} \mathrm{Re} \, \frac{\varphi (2 \pi m \alpha) - \varphi (2 \pi m \alpha)^{N+1}}{(1-\varphi (2 \pi m \alpha))^2} . \end{split}
\end{equation}
In the last step we used identity \eqref{varphisumidentity}. An application of estimates \eqref{varphiestimate} and \eqref{diophantine2} in Lemma \ref{diophantinelemma1} with $\delta=C/\sqrt{N}$, $\theta=2$ and $\theta'=4$ shows that
\[ \begin{split} \left| \sum_{\substack{m=1 \\ \| m \alpha \| \ge C/\sqrt{N}}}^{\infty} \frac{1}{2 \pi^2 m^2} \cdot \frac{2}{N^2} \mathrm{Re} \, \frac{\varphi (2 \pi m \alpha) - \varphi (2 \pi m \alpha)^{N+1}}{(1-\varphi (2 \pi m \alpha))^2} \right| &\le \sum_{\substack{m=1 \\ \| m \alpha \| \ge C/\sqrt{N}}}^{\infty} \frac{1}{m^2 N^2|1-\varphi (2 \pi m \alpha)|^2} \\ &\ll \frac{1}{N^2} \sum_{\substack{m=1 \\ \| m \alpha \| \ge C/\sqrt{N}}}^{\infty} \frac{1}{m^2 \| m \alpha \|^4} \ll \frac{1}{N} . \end{split} \]
Formula \eqref{EWN1} thus simplifies to
\[ \mathbb{E} (W_N) = \frac{1}{N} \sum_{\substack{m=1 \\ \| m \alpha \| \ge C/\sqrt{N}}}^{\infty} \frac{1}{2 \pi^2 m^2} \cdot \frac{1-|\varphi (2 \pi m \alpha)|^2}{|1-\varphi (2 \pi m \alpha)|^2} + O \left( \frac{1}{N} \right) . \]
Recalling \eqref{varphisumidentity}, an application of estimates \eqref{varphiestimate} and \eqref{diophantine3} in Lemma \ref{diophantinelemma1} with $\delta = C/\sqrt{N}$, $\theta =2$ and $M=\lfloor \sqrt{N} \rfloor+1$ shows that the contribution of the terms $m>\sqrt{N}$ in the previous formula is negligible:
\[ \frac{1}{N} \sum_{\substack{m>\sqrt{N} \\ \| m \alpha \| \ge C/\sqrt{N}}} \frac{1}{2 \pi^2 m^2} \cdot \frac{1-|\varphi (2 \pi m \alpha)|^2}{|1-\varphi (2 \pi m \alpha)|^2} \ll \frac{1}{N} \sum_{\substack{m>\sqrt{N} \\ \| m \alpha \| \ge C/\sqrt{N}}} \frac{1}{m^2 \| m \alpha \|^2} \ll \frac{1}{N}.  \]
Observing that $\| m \alpha \| \ge C/\sqrt{N}$ holds for all $1 \le m \le \sqrt{N}$, we thus obtain the desired formula
\[ \mathbb{E} (W_N) = \frac{1}{N} \sum_{1 \le m \le \sqrt{N}} \frac{1}{2 \pi^2 m^2} \cdot \frac{1-|\varphi (2 \pi m \alpha)|^2}{|1-\varphi (2 \pi m \alpha)|^2} + O \left( \frac{1}{N} \right) . \]

We now estimate the variance $\mathrm{Var} (W_N)$. Squaring and then taking the expected value in \eqref{WNexplicit} gives
\[ \begin{split} \mathbb{E} (W_N^2) &= \sum_{m=1}^{\infty} \frac{1}{4 \pi^4 m^4} \mathbb{E} \left( \left| \frac{1}{N} \sum_{n=1}^N e^{2 \pi i m S_n \alpha} \right|^4 \right) \\ &\phantom{={}}+ \sum_{\substack{m_1, m_2=1 \\ m_1 \neq m_2}}^{\infty} \frac{1}{4 \pi^4 m_1^2 m_2^2} \cdot \frac{1}{N^4} \sum_{n_1, n_2, n_3, n_4=1}^N \mathbb{E} \left( e^{2 \pi i (m_1 (S_{n_1} - S_{n_2}) + m_2 (S_{n_3} - S_{n_4}))\alpha} \right) . \end{split} \]
On the other hand, taking the expected value and then squaring in \eqref{WNexplicit} leads to
\[ \begin{split} (\mathbb{E} (W_N))^2 &= \sum_{m=1}^{\infty} \frac{1}{4 \pi^4 m^4} \left( \mathbb{E} \left( \left| \frac{1}{N} \sum_{n=1}^N e^{2 \pi i m S_n \alpha} \right|^2 \right) \right)^2 \\ &\phantom{={}}+ \sum_{\substack{m_1, m_2=1 \\ m_1 \neq m_2}}^{\infty} \frac{1}{4 \pi^4 m_1^2 m_2^2} \cdot \frac{1}{N^4} \sum_{n_1, n_2, n_3, n_4=1}^N \mathbb{E} \left( e^{2 \pi i m_1 (S_{n_1} - S_{n_2}) \alpha} \right) \mathbb{E} \left( e^{2 \pi i m_2 (S_{n_3} - S_{n_4})\alpha} \right) . \end{split} \]
Subtracting the previous two formulas yields $\mathrm{Var} (W_N) = U+V$ with
\[ \begin{split} U&= \sum_{m=1}^{\infty} \frac{1}{4 \pi^4 m^4} \mathrm{Var} \left( \left| \frac{1}{N} \sum_{n=1}^N e^{2 \pi i m S_n \alpha} \right|^2 \right) , \\ V&= \sum_{\substack{m_1, m_2=1 \\ m_1 \neq m_2}}^{\infty} \frac{1}{4 \pi^4 m_1^2 m_2^2} \cdot \frac{1}{N^4} \sum_{n_1, n_2, n_3, n_4=1}^N \mathrm{Cov} \left( e^{2 \pi i m_1 (S_{n_1} - S_{n_2}) \alpha}, e^{2 \pi i m_2 (S_{n_3} - S_{n_4}) \alpha} \right) . \end{split} \]

Consider first $U$. Let $A$ be the positive integer from Lemma \ref{momentlemma}, which depends only on the distribution of $X_1$. Note that $A \alpha$ is also badly approximable, and $\| mA \alpha \| \le A \| m \alpha \|$ for all $m \in \mathbb{N}$. An application of \eqref{diophantine1} in Lemma \ref{diophantinelemma1} with $\delta=C/(A\sqrt{N})$ and $\theta=4$ leads to
\[ \sum_{\substack{m=1 \\ \| m A \alpha \| < C/(A\sqrt{N})}}^{\infty} \frac{1}{4 \pi^4 m^4} \mathrm{Var} \left( \left| \frac{1}{N} \sum_{n=1}^N e^{2 \pi i m S_n \alpha} \right|^2 \right) \le \sum_{\substack{m=1 \\ \| m A \alpha \| < C/(A\sqrt{N})}}^{\infty} \frac{1}{m^4} \ll \frac{1}{N^2}, \]
hence
\[  U= \sum_{\substack{m=1 \\ \| m A \alpha \| \ge C/(A\sqrt{N})}}^{\infty} \frac{1}{4 \pi^4 m^4} \mathrm{Var} \left( \left| \frac{1}{N} \sum_{n=1}^N e^{2 \pi i m S_n \alpha} \right|^2 \right) + O \left( \frac{1}{N^2} \right) . \]
Lemma \ref{momentlemma} with $p=2$ and $x=m\alpha$ shows that
\[ \mathbb{E} \left( \left| \frac{1}{N} \sum_{n=1}^N e^{2 \pi i m S_n \alpha} \right|^4 \right) = \frac{2}{N^2} \left( \frac{1-|\varphi (2 \pi m \alpha)|^2}{|1-\varphi (2 \pi m \alpha)|^2} \right)^2 + O \left( \frac{1}{N^3 \| m A \alpha \|^6} \right) . \]
Squaring \eqref{Eexponentialsum} and using estimate \eqref{varphiestimate} gives
\[ \begin{split} \left( \mathbb{E} \left( \left| \frac{1}{N} \sum_{n=1}^N e^{2 \pi i m S_n \alpha} \right|^2 \right) \right)^2 &= \frac{1}{N^2} \left( \frac{1-|\varphi (2 \pi m \alpha)|^2}{|1-\varphi (2 \pi m \alpha)|^2} \right)^2 + O \left( \frac{1}{N^3 \| m \alpha \|^6} + \frac{1}{N^4 \| m \alpha \|^8} \right) . \end{split} \]
Subtracting the previous two formulas leads to
\[ \mathrm{Var} \left( \left| \frac{1}{N} \sum_{n=1}^N e^{2 \pi i m S_n \alpha} \right|^2 \right) = \frac{1}{N^2} \left( \frac{1-|\varphi (2 \pi m \alpha)|^2}{|1-\varphi (2 \pi m \alpha)|^2} \right)^2 + O \left( \frac{1}{N^3 \| m A \alpha \|^6} + \frac{1}{N^4 \| m \alpha \|^8} \right) . \]
An application of \eqref{diophantine2} in Lemma \ref{diophantinelemma1} with $\delta=C/(A\sqrt{N})$, $\theta=4$ and $\theta'=6$ resp.\ $\theta'=8$ shows that
\[ \sum_{\substack{m=1 \\ \| m A \alpha \| \ge C/(A\sqrt{N})}}^{\infty} \frac{1}{m^4} \left( \frac{1}{N^3 \| mA\alpha \|^6} + \frac{1}{N^4 \| m A \alpha \|^8} \right) \ll \frac{1}{N^2}, \]
hence
\[ U= \frac{1}{N^2} \sum_{\substack{m=1 \\ \| m A \alpha \| \ge C/(A\sqrt{N})}}^{\infty} \frac{1}{4 \pi^4 m^4} \left( \frac{1-|\varphi (2 \pi m \alpha)|^2}{|1-\varphi (2 \pi m \alpha)|^2} \right)^2 + O \left( \frac{1}{N^2} \right) . \]
Estimate \eqref{diophantine3} in Lemma \ref{diophantinelemma1} with $\delta=C/(A\sqrt{N})$, $\theta=4$ and $M=\lfloor \sqrt{N} \rfloor+1$ together with \eqref{varphiestimate} shows that the contribution of the terms $m>\sqrt{N}$ is negligible:
\[ \frac{1}{N^2} \sum_{\substack{m>\sqrt{N} \\ \| m A \alpha \| \ge C/(A\sqrt{N})}} \frac{1}{4 \pi^4 m^4} \left( \frac{1-|\varphi (2 \pi m \alpha)|^2}{|1-\varphi (2 \pi m \alpha)|^2} \right)^2 \ll \frac{1}{N^2}  \sum_{\substack{m>\sqrt{N} \\ \| m A \alpha \| \ge C/(A\sqrt{N})}} \frac{1}{m^4 \| m A \alpha \|^4} \ll \frac{1}{N^2} . \]
Observing that $\| m A \alpha \| \ge C/(A\sqrt{N})$ holds for all $1 \le m \le \sqrt{N}$, we thus obtain
\[ U= \frac{1}{N^2} \sum_{1 \le m \le \sqrt{N}} \frac{1}{4 \pi^4 m^4} \left( \frac{1-|\varphi (2 \pi m \alpha)|^2}{|1-\varphi (2 \pi m \alpha)|^2} \right)^2 + O \left( \frac{1}{N^2} \right) . \]

It remains to prove that $V \ll (\log \log N)^2/N^2$, and the claimed estimate for $\mathrm{Var} (W_N)$ will follow. Assume first that $n_1, n_2, n_3, n_4$ are pairwise distinct, and let us write $\{ n_1, n_2, n_3, n_4 \} = \{ k_1, k_2, k_3, k_4 \}$ with $k_1<k_2<k_3<k_4$. There is a unique permutation $\sigma$ of the set $\{ 1,2,3,4 \}$ for which $n_{\sigma(i)}=k_i$, $i=1,2,3,4$. We will estimate
\[ C_{\sigma}(m_1, m_2) = \sum_{\substack{n_1, n_2, n_3, n_4=1 \\ n_{\sigma(1)} < n_{\sigma(2)} < n_{\sigma(3)} < n_{\sigma(4)}}}^N \mathrm{Cov} \left( e^{2 \pi i m_1 (S_{n_1} - S_{n_2}) \alpha}, e^{2 \pi i m_2 (S_{n_3} - S_{n_4}) \alpha} \right) \]
for all $4!=24$ possible permutations $\sigma$. Observe that there are 8 permutations for which the intervals $I=(\min \{ n_1, n_2 \}, \max \{ n_1, n_2 \}]$ and $J=(\min \{ n_3, n_4 \}, \max \{ n_3, n_4 \}]$ are disjoint: these are 1234 and 3412, and the six others obtained by switching $1 \leftrightarrow 2$ or $3 \leftrightarrow 4$. Whenever $I$ and $J$ are disjoint, we have
\begin{equation}\label{covariancezero}
\mathrm{Cov} \left( e^{2 \pi i m_1 (S_{n_1} - S_{n_2}) \alpha}, e^{2 \pi i m_2 (S_{n_3} - S_{n_4}) \alpha} \right) =0
\end{equation}
since the random variables are independent, thus $C_{\sigma}(m_1, m_2)=0$.

For the remaining 16 permutations, we write
\[ \begin{split} m_1 (S_{n_1} - S_{n_2}) + m_2 (S_{n_3} - S_{n_4}) &= \varepsilon_1 S_{k_1} + \varepsilon_2 S_{k_2} + \varepsilon_3 S_{k_3} + \varepsilon_4 S_{k_4}, \\ m_1 (S_{n_1} - S_{n_2}) &= \varepsilon'_1 S_{k_1} + \varepsilon'_2 S_{k_2} + \varepsilon'_3 S_{k_3} + \varepsilon'_4 S_{k_4}, \\ m_2 (S_{n_3} - S_{n_4}) &= \varepsilon''_1 S_{k_1} + \varepsilon''_2 S_{k_2} + \varepsilon''_3 S_{k_3} + \varepsilon''_4 S_{k_4}, \end{split} \]
with coefficients $\varepsilon_j \in \{ \pm m_1, \pm m_2 \}$, $\varepsilon'_j \in \{ \pm m_1,0 \}$, $\varepsilon''_j \in \{ \pm m_2,0 \}$, $j=1,2,3,4$ depending only on $\sigma$. Note that $\sum_{j=1}^4 \varepsilon_j = \sum_{j=1}^4 \varepsilon'_j = \sum_{j=1}^4 \varepsilon''_j=0$. Letting $c_j = \varepsilon_j + \cdots +\varepsilon_4$, as well as $c'_j = \varepsilon'_j + \cdots +\varepsilon'_4$ and $c''_j = \varepsilon''_j + \cdots +\varepsilon''_4$, we deduce a decomposition into a sum of independent random variables:
\[ \begin{split} m_1 (S_{n_1} - S_{n_2}) + m_2 (S_{n_3} - S_{n_4}) &= c_2 \sum_{i=k_1+1}^{k_2} X_i + c_3 \sum_{i=k_2+1}^{k_3} X_i + c_4 \sum_{i=k_3 +1}^{k_4} X_i , \\ m_1 (S_{n_1} - S_{n_2}) &= c'_2 \sum_{i=k_1+1}^{k_2} X_i + c'_3 \sum_{i=k_2+1}^{k_3} X_i + c'_4 \sum_{i=k_3 +1}^{k_4} X_i , \\ m_2 (S_{n_3} - S_{n_4}) &= c''_2 \sum_{i=k_1+1}^{k_2} X_i + c''_3 \sum_{i=k_2+1}^{k_3} X_i + c''_4 \sum_{i=k_3 +1}^{k_4} X_i . \end{split} \]
In particular,
\[ \begin{split} \mathbb{E} \left( e^{2 \pi i (m_1 (S_{n_1} - S_{n_2}) + m_2 (S_{n_3} - S_{n_4})) \alpha} \right) &= \varphi (2 \pi c_2 \alpha)^{k_2-k_1} \varphi (2 \pi c_3 \alpha)^{k_3-k_2} \varphi (2 \pi c_4 \alpha)^{k_4-k_3} , \\ \mathbb{E} \left( e^{2 \pi i m_1 (S_{n_1} - S_{n_2}) \alpha} \right) &= \varphi (2 \pi c'_2 \alpha)^{k_2-k_1} \varphi (2 \pi c'_3 \alpha)^{k_3-k_2} \varphi (2 \pi c'_4 \alpha)^{k_4-k_3} , \\ \mathbb{E} \left( e^{2 \pi i m_2 (S_{n_3} - S_{n_4}) \alpha} \right) &= \varphi (2 \pi c''_2 \alpha)^{k_2-k_1} \varphi (2 \pi c''_3 \alpha)^{k_3-k_2} \varphi (2 \pi c''_4 \alpha)^{k_4-k_3} . \end{split} \]
Therefore,
\[ \begin{split} \sum_{\substack{n_1, n_2, n_3, n_4=1 \\ n_{\sigma(1)} < n_{\sigma(2)} < n_{\sigma(3)} < n_{\sigma(4)}}}^N &\left| \mathbb{E} \left( e^{2 \pi i (m_1 (S_{n_1} - S_{n_2}) + m_2 (S_{n_3} - S_{n_4})) \alpha} \right) \right| \\ &= \sum_{1 \le k_1<k_2<k_3<k_4 \le N} |\varphi (2 \pi c_2 \alpha)|^{k_2-k_1} |\varphi (2 \pi c_3 \alpha)|^{k_3-k_2} |\varphi (2 \pi c_4 \alpha)|^{k_4-k_3} \\ &\le \sum_{1 \le k_1<k_2<k_3 \le N} |\varphi (2 \pi c_2 \alpha)|^{k_2-k_1} |\varphi (2 \pi c_3 \alpha)|^{k_3-k_2} \min \left\{ \frac{2}{1-|\varphi (2 \pi c_4 \alpha)|}, N \right\} \\ &\le N \prod_{j=2}^4 \min \left\{ \frac{2}{1-|\varphi (2 \pi c_j \alpha)|}, N \right\} , \end{split} \]
and similarly,
\[ \begin{split} \sum_{\substack{n_1, n_2, n_3, n_4=1 \\ n_{\sigma(1)} < n_{\sigma(2)} < n_{\sigma(3)} < n_{\sigma(4)}}}^N &\left| \mathbb{E} \left( e^{2 \pi i m_1 (S_{n_1} - S_{n_2}) \alpha} \right) \mathbb{E} \left( e^{2 \pi i m_2 (S_{n_3} - S_{n_4}) \alpha} \right) \right| \\ &\le N \prod_{j=2}^4 \min \left\{ \frac{2}{1-|\varphi (2 \pi c'_j \alpha) \varphi (2 \pi c''_j \alpha)|}, N \right\} . \end{split} \]
Adding the previous two formulas yields
\[ C_{\sigma}(m_1, m_2) \le N \prod_{j=2}^4 \min \left\{ \frac{2}{1-|\varphi (2 \pi c_j \alpha)|}, N \right\} + N \prod_{j=2}^4 \min \left\{ \frac{2}{1-|\varphi (2 \pi c'_j \alpha) \varphi (2 \pi c''_j \alpha)|}, N \right\} . \]
Now let $L'=\mathrm{gcd} (\mathrm{supp} (X_1-X_2))$. Estimate \eqref{varphiestimate} for the nondegenerate integer-valued random variable $X_1-X_2$ with characteristic function $|\varphi|^2$ reads $1-|\varphi (2 \pi x)|^2 \gg \| L'x \|^2$ uniformly in $x \in \mathbb{R}$. Thus
\[ \frac{1}{1-|\varphi (2 \pi c_j \alpha)|} \le \frac{2}{1-|\varphi (2 \pi c_j \alpha)|^2} \ll \frac{1}{\| c_j L' \alpha \|^2}. \]
Using the fact that $c_j=c'_j + c''_j$, we also deduce
\begin{multline*}
\frac{1}{1-|\varphi (2 \pi c'_j \alpha) \varphi (2 \pi c''_j \alpha)|} \\ \le \min \left\{ \frac{1}{1-|\varphi (2 \pi c'_j \alpha)|}, \frac{1}{1-|\varphi (2 \pi c''_j \alpha)|} \right\} \ll \min \left\{ \frac{1}{\| c'_j L' \alpha \|^2}, \frac{1}{\| c''_j L' \alpha \|^2} \right\} \ll \frac{1}{\| c_j L' \alpha \|^2} .
\end{multline*}
Hence
\[ C_{\sigma}(m_1, m_2) \ll N \prod_{j=2}^4 \min \left\{ \frac{1}{\| c_j L' \alpha \|^2}, N \right\} . \]

One readily checks that for the 16 permutations $\sigma$ for which the intervals $I$ and $J$ intersect, the absolute values $|c_2|$, $|c_3|$, $|c_4|$ are either $|m_1|$, $|m_2|$, $|m_1+m_2|$ or $|m_1|$, $|m_2|$, $|m_1-m_2|$, in some order. Indeed,
\begin{align*}
(\varepsilon_1, \varepsilon_2, \varepsilon_3, \varepsilon_4) &= (m_1, m_2, -m_1, -m_2) & &\textrm{and} & (c_2, c_3, c_4) &= (-m_1, -m_1-m_2, -m_2) & &\textrm{if} & \sigma &= 1324, \\  (\varepsilon_1, \varepsilon_2, \varepsilon_3, \varepsilon_4) &=(m_1, m_2, -m_2, -m_1) & &\textrm{and} & (c_2, c_3, c_4) &= (-m_1, -m_1-m_2, -m_1) & &\textrm{if} & \sigma &= 1342.
\end{align*}
The remaining 14 permutations are obtained from $1324$ or $1342$ by switching $1 \leftrightarrow 2$ or $3 \leftrightarrow 4$ or $12 \leftrightarrow 34$, and these simply switch the sign of $m_1$ resp.\ switch the sign of $m_2$ resp.\ switch $m_1 \leftrightarrow m_2$, leading to the given possible values of $|c_j|$. Summing over all 24 permutations $\sigma$ thus leads to
\begin{equation}\label{sumCsigma}
\begin{split} \sum_{\sigma} C_{\sigma} (m_1, m_2) &= \sum_{\substack{n_1, n_2, n_3, n_4=1 \\ \textrm{pairwise distinct}}}^N \mathrm{Cov} \left( e^{2 \pi i m_1 (S_{n_1} - S_{n_2}) \alpha}, e^{2 \pi i m_2 (S_{n_3} - S_{n_4}) \alpha} \right) \\ &\ll N \min \left\{ \frac{1}{\| m_1 L' \alpha \|^2}, N \right\} \min \left\{ \frac{1}{\| m_2 L' \alpha \|^2}, N \right\} \min \left\{ \frac{1}{\| (m_1+m_2) L' \alpha \|^2}, N \right\} \\ &\phantom{\ll{}} + N \min \left\{ \frac{1}{\| m_1 L' \alpha \|^2}, N \right\} \min \left\{ \frac{1}{\| m_2 L' \alpha \|^2}, N \right\} \min \left\{ \frac{1}{\| (m_1-m_2) L' \alpha \|^2}, N \right\} , \end{split}
\end{equation}
and an application of Lemma \ref{diophantinelemma2} gives
\begin{equation}\label{size4}
\sum_{\substack{m_1, m_2=1 \\ m_1 \neq m_2}}^{\infty} \frac{1}{4 \pi^4 m_1^2 m_2^2} \cdot \frac{1}{N^4} \sum_{\substack{n_1, n_2, n_3, n_4=1 \\ \textrm{pairwise distinct}}}^N \mathrm{Cov} \left( e^{2 \pi i m_1 (S_{n_1} - S_{n_2}) \alpha}, e^{2 \pi i m_2 (S_{n_3} - S_{n_4}) \alpha} \right) \ll \frac{(\log \log N)^2}{N^2} .
\end{equation}

Assume next that $n_1, n_2, n_3, n_4$ has 3 distinct values, and let us write $\{ n_1, n_2, n_3, n_4 \} = \{ k_1, k_2, k_3 \}$ with $k_1<k_2<k_3$. There is a unique surjective map $g: \{ 1,2,3,4 \} \to \{ 1,2,3 \}$ for which $n_i=k_{g(i)}$ for all $i=1,2,3,4$. That is, $n_i$ is the $g(i)$th smallest among $n_1, n_2, n_3, n_4$. We will estimate
\[ C_g (m_1, m_2) = \sum_{\substack{n_1, n_2, n_3, n_4=1 \\ n_i \textrm{ is the }g(i)\textrm{th smallest}}}^N \mathrm{Cov} \left( e^{2 \pi i m_1 (S_{n_1} - S_{n_2}) \alpha}, e^{2 \pi i m_2 (S_{n_3} - S_{n_4}) \alpha} \right) \]
for all 36 surjective maps $g$. For instance, $g=g(1)g(2)g(3)g(4)=1213$ corresponds to summing over all $n_1 =n_3<n_2<n_4$. There are 6 surjective maps $g$ for which $g(1)=g(2)$, and another 6 for which $g(3)=g(4)$. In this case, $S_{n_1}-S_{n_2}=0$ resp.\ $S_{n_3}-S_{n_4}=0$, hence the covariance is zero as in \eqref{covariancezero}. There are 8 surjective maps $g$ for which the intervals $I=(\min \{ n_1, n_2 \}, \max \{ n_1, n_2 \}]$ and $J=(\min \{ n_3, n_4 \}, \max \{ n_3, n_4 \}]$ are nonempty and disjoint: this happens when $\sup I = \inf J$ or $\sup J = \inf I$. In these cases the random variables $S_{n_1} - S_{n_2}$ and $S_{n_3} - S_{n_4}$ are independent, hence \eqref{covariancezero} holds again.

For the remaining 16 surjective maps $g$, we similarly write
\[ \begin{split} m_1 (S_{n_1} - S_{n_2}) + m_2 (S_{n_3} - S_{n_4}) &= \varepsilon_1 S_{k_1} + \varepsilon_2 S_{k_2} + \varepsilon_3 S_{k_3}, \\ m_1 (S_{n_1} - S_{n_2}) &= \varepsilon'_1 S_{k_1} + \varepsilon'_2 S_{k_2} + \varepsilon'_3 S_{k_3}, \\ m_2 (S_{n_3} - S_{n_4}) &= \varepsilon''_1 S_{k_1} + \varepsilon''_2 S_{k_2} + \varepsilon''_3 S_{k_3}, \end{split} \]
with coefficients $\varepsilon_j \in \{ \pm m_1, \pm m_2 \}$, $\varepsilon'_j \in \{ \pm m_1,0 \}$, $\varepsilon''_j \in \{ \pm m_2,0 \}$, $j=1,2,3$ depending only on $g$. Let $c_j = \varepsilon_j + \cdots +\varepsilon_3$, as well as $c'_j = \varepsilon'_j + \cdots +\varepsilon'_3$ and $c''_j = \varepsilon''_j + \cdots +\varepsilon''_3$. Following the steps above, we deduce
\[ C_g (m_1, m_2) \ll N \prod_{j=2}^3 \min \left\{ \frac{1}{\| c_j L' \alpha \|^2}, N \right\} . \]

One readily checks that for the 16 surjective maps $g$ for which the intervals $I$ and $J$ are nonempty and intersect, the absolute values $|c_2|$, $|c_3|$ are either $|m_1|$, $|m_1+m_2|$ or $|m_1|$, $|m_1-m_2|$ or $|m_2|$, $|m_1+m_2|$ or $|m_2|$, $|m_1-m_2|$, in some order. Indeed,
\begin{align*}
(\varepsilon_1, \varepsilon_2, \varepsilon_3) &=(m_1+m_2, -m_1, -m_2) & &\textrm{and} & (c_2, c_3) &= (-m_1-m_2, -m_2) & &\textrm{if} & g &=1213, \\  (\varepsilon_1, \varepsilon_2, \varepsilon_3) &= (m_2, m_1, -m_1-m_2) & &\textrm{and} & (c_2, c_3) &=(-m_2, -m_1-m_2) & &\textrm{if} & g &= 2313.
\end{align*}
The remaining 14 surjective maps are obtained from $1213$ or $2313$ by switching $g(1) \leftrightarrow g(2)$ or $g(3) \leftrightarrow g(4)$ or $g(1)g(2) \leftrightarrow g(3)g(4)$, and these simply switch the sign of $m_1$ resp.\ switch the sign of $m_2$ resp.\ switch $m_1 \leftrightarrow m_2$, leading to the given possible values of $|c_j|$. Summing over all 36 surjective maps $g$ thus leads to an upper bound for
\[ \sum_{g} C_g (m_1, m_2) = \sum_{\substack{n_1, n_2, n_3, n_4=1 \\ |\{ n_1, n_2, n_3, n_4 \}|=3}}^N \mathrm{Cov} \left( e^{2 \pi i m_1 (S_{n_1} - S_{n_2}) \alpha}, e^{2 \pi i m_2 (S_{n_3} - S_{n_4}) \alpha} \right) \]
that is even less than the upper bound in \eqref{sumCsigma}, hence
\[ \sum_{\substack{m_1, m_2=1 \\ m_1 \neq m_2}}^{\infty} \frac{1}{4 \pi^4 m_1^2 m_2^2} \cdot \frac{1}{N^4} \sum_{\substack{n_1, n_2, n_3, n_4=1 \\ |\{ n_1, n_2, n_3, n_4 \}|=3}}^N \mathrm{Cov} \left( e^{2 \pi i m_1 (S_{n_1} - S_{n_2}) \alpha}, e^{2 \pi i m_2 (S_{n_3} - S_{n_4}) \alpha} \right) \ll \frac{(\log \log N)^2}{N^2} . \]

As the number of integer vectors $(n_1, n_2, n_3, n_4) \in [1,N]^4$ such that $n_1, n_2, n_3, n_4$ take at most 2 values is $\ll N^2$, we also have
\[ \sum_{\substack{m_1, m_2=1 \\ m_1 \neq m_2}}^{\infty} \frac{1}{4 \pi^4 m_1^2 m_2^2} \cdot \frac{1}{N^4} \sum_{\substack{n_1, n_2, n_3, n_4=1 \\ |\{ n_1, n_2, n_3, n_4 \}|\le 2}}^N \mathrm{Cov} \left( e^{2 \pi i m_1 (S_{n_1} - S_{n_2}) \alpha}, e^{2 \pi i m_2 (S_{n_3} - S_{n_4}) \alpha} \right) \ll \frac{1}{N^2} . \]
The previous two formulas together with \eqref{size4} show that $V \ll (\log \log N)^2/N^2$, thus concluding the proof.
\end{proof}

\subsection{Proof of Corollaries \ref{badlyapproxcorollary} and \ref{noasymptoticcorollary}}

We now deduce Corollaries \ref{badlyapproxcorollary} and \ref{noasymptoticcorollary} from Theorem \ref{badlyapproxtheorem}.

\begin{proof}[Proof of Corollary \ref{badlyapproxcorollary}] Assume $X_1$ is nondegenerate, and $\alpha$ is badly approximable. Let $L=\mathrm{gcd} (\mathrm{supp}(X_1))$. Estimates \eqref{varphifractionestimate} and \eqref{diophantine4} in Lemma \ref{diophantinelemma1} with $\theta=2$ resp.\ $\theta=4$ yield
\[ \begin{split} \sum_{1 \le m \le \sqrt{N}} \frac{1}{2 \pi^2 m^2} \cdot \frac{1-|\varphi (2 \pi m \alpha)|^2}{|1-\varphi (2 \pi m \alpha)|^2} &\ll  \sum_{1 \le m \le \sqrt{N}} \frac{1}{m^2 \| m L \alpha \|^2} \ll \log N, \\ \sum_{1 \le m \le \sqrt{N}} \frac{1}{4 \pi^4 m^4} \left( \frac{1-|\varphi (2 \pi m \alpha)|^2}{|1-\varphi (2 \pi m \alpha)|^2} \right)^2 &\ll \sum_{1 \le m \le \sqrt{N}} \frac{1}{m^4 \| m L \alpha \|^4} \ll \log N . \end{split} \]
The upper bounds $\mathbb{E} (W_{\mathbb{T},2}^2 (\mu_N, \lambda)) \ll (\log N)/N$ and $\mathrm{Var} (W_{\mathbb{T},2}^2 (\mu_N, \lambda)) \ll (\log N)/N^2$ now immediately follow from Theorem \ref{badlyapproxtheorem}.

Assume now in addition that $\mathbb{E} (X_1)=0$ and $0<\mathbb{E} (X_1^2) < \infty$. Then the characteristic function is twice continuously differentiable, and has the asymptotics $\varphi (x) =1-(\sigma^2/2) x^2+o(x^2)$ as $x \to 0$, where $\sigma^2 = \mathbb{E} (X_1^2)$. In particular,
\[ \frac{1-|\varphi (2 \pi x)|^2}{|1-\varphi (2 \pi x)|^2} \sim \frac{1}{\pi^2 \sigma^2 x^2} \quad \textrm{as } x \to 0. \]
The function $(1-|\varphi (2 \pi x)|^2) / |1-\varphi (2 \pi x)|^2$ is $1/L$-periodic and continuous except at integer multiples of $1/L$. Given an arbitrary $\varepsilon>0$, we thus have
\[ \frac{(1-\varepsilon) L^2}{\pi^2 \sigma^2 \| Lx \|^2} - O(1) \le \frac{1-|\varphi (2 \pi x)|^2}{|1-\varphi (2 \pi x)|^2} \le \frac{(1+\varepsilon) L^2}{\pi^2 \sigma^2 \| Lx \|^2} +O(1), \]
and consequently
\[ \begin{split} \frac{(1-\varepsilon)L^2}{2 \pi^4 \sigma^2} \sum_{1 \le m \le \sqrt{N}} \frac{1}{m^2 \| mL\alpha \|^2} -O(1) &\le \sum_{1 \le m \le \sqrt{N}} \frac{1}{2 \pi^2 m^2} \cdot \frac{1-|\varphi (2 \pi m \alpha)|^2}{|1-\varphi (2 \pi m \alpha)|^2} \\ &\le \frac{(1+\varepsilon)L^2}{2 \pi^4 \sigma^2} \sum_{1 \le m \le \sqrt{N}} \frac{1}{m^2 \| mL\alpha \|^2} +O(1) \end{split} \]
with implied constants depending on $\varepsilon$, $\alpha$ and the distribution of $X_1$. As $\varepsilon$ was arbitrary, this implies
\begin{equation}\label{diophantineasymptotics1}
\sum_{1 \le m \le \sqrt{N}} \frac{1}{2 \pi^2 m^2} \cdot \frac{1-|\varphi (2 \pi m \alpha)|^2}{|1-\varphi (2 \pi m \alpha)|^2} \sim \frac{L^2}{2 \pi^4 \sigma^2} \sum_{1 \le m \le \sqrt{N}} \frac{1}{m^2 \| mL\alpha \|^2} \quad \textrm{as } N \to \infty .
\end{equation}
A similar argument leads to
\begin{equation}\label{diophantineasymptotics2}
\sum_{1 \le m \le \sqrt{N}} \frac{1}{4 \pi^4 m^4} \left( \frac{1-|\varphi (2 \pi m \alpha)|^2}{|1-\varphi (2 \pi m \alpha)|^2} \right)^2 \sim \frac{L^4}{4 \pi^8 \sigma^4} \sum_{1 \le m \le \sqrt{N}} \frac{1}{m^4 \| mL\alpha \|^4} \quad \textrm{as } N \to \infty .
\end{equation}
Estimate \eqref{diophantine4} in Lemma \ref{diophantinelemma1} with $\theta=2$ resp.\ $\theta=4$ shows that the sums in \eqref{diophantineasymptotics1} and \eqref{diophantineasymptotics2} are both $\gg \log N$. The lower bounds $\mathbb{E} (W_{\mathbb{T},2}^2 (\mu_N, \lambda))\gg (\log N)/N$ and $\mathrm{Var} (W_{\mathbb{T},2}^2 (\mu_N, \lambda)) \gg (\log N)/N^2$ now immediately follow from Theorem \ref{badlyapproxtheorem}.
\end{proof}

\begin{proof}[Proof of Corollary \ref{noasymptoticcorollary}] Assume $\mathbb{E}(X_1)=0$ and $0<\mathbb{E} (X_1^2) < \infty$, and let $L=\mathrm{gcd} (\mathrm{supp}(X_1))$. We will actually show that there exists a badly approximable $\alpha$ depending only on $L$ for which the sequences $(N/\log N) \mathbb{E} (W_{\mathbb{T},2}^2 (\mu_N, \lambda))$ and $(N^2/\log N) \mathrm{Var} (W_{\mathbb{T},2}^2 (\mu_N, \lambda))$ do not converge. By Theorem \ref{badlyapproxtheorem} and the asymptotic relations \eqref{diophantineasymptotics1} and \eqref{diophantineasymptotics2}, it will be enough to show the existence of a badly approximable $\alpha$ for which the sequences
\begin{equation}\label{divergent}
\frac{1}{\log M} \sum_{m=1}^M \frac{1}{m^2 \| mL \alpha \|^2} \quad \textrm{and} \quad \frac{1}{\log M} \sum_{m=1}^M \frac{1}{m^4 \| mL \alpha \|^4}
\end{equation}
do not converge as $M \to \infty$.

Let $a$ and $\rho$ be large positive integers, and consider the sequence $a_1, a_2, \ldots$ defined as $a_1=1$, and
\[ a_j = \left\{ \begin{array}{ll} 1 & \textrm{if } \rho^{2k} < j \le \rho^{2k+1} \textrm{ with some } k\ge 0, \\ a & \textrm{if } \rho^{2k+1} < j \le \rho^{2k+2} \textrm{ with some } k \ge 0. \end{array} \right. \]
Define $\alpha$ via the regular continued fraction expansion $L \alpha = [0;a_1,a_2,\ldots]$, and let $p_k/q_k = [0;a_1, \ldots, a_k]$ be the convergents. In \cite{BO} we showed that for any $\theta>1$,
\[ \left| \sum_{m=1}^{q_k-1} \frac{1}{m^{\theta} \| m L \alpha \|^{\theta}} - \zeta(2\theta) \sum_{j=1}^k a_j^{\theta} \right| \le 6^{\theta} \frac{4\theta^2}{(\theta-1)^2} \sum_{j=1}^k a_j^{\theta-1} , \]
where $\zeta$ is the Riemann zeta function. With $\theta=2$ resp.\ $\theta=4$ this immediately implies that
\[ \begin{split} \sum_{m=1}^{q_{\rho^{2k+1}}-1} \frac{1}{m^2 \| mL \alpha \|^2} &\ll \sum_{j=1}^{\rho^{2k+1}} a_j^2 \ll \rho^{2k+1}, \\ \sum_{m=1}^{q_{\rho^{2k+1}}-1} \frac{1}{m^4 \| mL \alpha \|^4} &\ll \sum_{j=1}^{\rho^{2k+1}} a_j^4 \ll \rho^{2k+1}, \end{split} \]
and
\[ \begin{split} \sum_{m=1}^{q_{\rho^{2k+2}}-1} \frac{1}{m^2 \| mL \alpha \|^2} &\ge \zeta(2) \sum_{j=1}^{\rho^{2k+2}} a_j^2 - O \bigg( \sum_{j=1}^{\rho^{2k+2}} a_j \bigg) \gg a^2 \rho^{2k+2}, \\ \sum_{m=1}^{q_{\rho^{2k+2}}-1} \frac{1}{m^4 \| mL \alpha \|^4} &\ge \zeta(4) \sum_{j=1}^{\rho^{2k+2}} a_j^4 - O \bigg( \sum_{j=1}^{\rho^{2k+2}} a_j^3 \bigg) \gg a^4 \rho^{2k+2} \end{split} \]
provided that $\rho$ is large enough in terms of $a$. The recursion $q_k=a_k q_{k-1} +q_{k-2}$ shows that $q_k \le (a_k+1)q_{k-1}$. Hence $q_k \le \prod_{j=1}^k (a_j+1)$, and consequently $\log q_k \le \sum_{j=1}^k \log (a_j+1) \ll k \log a$. On the other hand, $q_k$ is at least the $(k+1)$st Fibonacci number, therefore $\log q_k \gg k$. In particular,
\begin{align*}
\frac{1}{\log M} \sum_{m=1}^M \frac{1}{m^2 \| mL \alpha \|^2} &\ll 1 & &\textrm{and} & \frac{1}{\log M} \sum_{m=1}^M \frac{1}{m^4 \| mL \alpha \|^4} &\ll 1 & &\textrm{for } M=q_{\rho^{2k+1}}-1, \\ \frac{1}{\log M} \sum_{m=1}^M \frac{1}{m^2 \| mL \alpha \|^2} &\gg \frac{a^2}{\log a} & &\textrm{and} & \frac{1}{\log M} \sum_{m=1}^M \frac{1}{m^4 \| mL \alpha \|^4} &\gg \frac{a^4}{\log a} & &\textrm{for } M=q_{\rho^{2k+2}}-1 .
\end{align*}
Choosing $a$ large enough, we conclude that the sequences in \eqref{divergent} do not converge as $M \to \infty$.
\end{proof}

\section{Random walks with quadratic irrationals}\label{quadraticsection}

In Lemma \ref{diophantinelemma1}, we showed that for any badly approximable $\alpha$ and any constant $\theta>1$,
\[ \sum_{m=1}^M \frac{1}{m^{\theta} \| m \alpha \|^{\theta}} \asymp \log M . \]
Together with Theorem \ref{badlyapproxtheorem} this led to
\[ \mathbb{E} \left( W_{\mathbb{T},2}^2 (\mu_N, \lambda) \right) \asymp \frac{\log N}{N} \quad \textrm{and} \quad \mathrm{Var} \left( W_{\mathbb{T},2}^2 (\mu_N, \lambda) \right) \asymp \frac{\log N}{N^2} . \]
In Corollary \ref{noasymptoticcorollary}, we also showed that these estimates cannot be improved to precise asymptotic relations for a general badly approximable number.

In the special case of a quadratic irrational $\alpha$, however, it is possible to refine the estimates above to precise asymptotic relations. As the main result of this section, in Theorem \ref{quadraticsumtheorem} below we will show that for any quadratic irrational $\alpha$ and any constant $\theta>1$, we have
\begin{equation}\label{quadraticasymptotics}
\sum_{m=1}^M \frac{1}{m^{\theta} \| m \alpha \|^{\theta}} = c(\alpha, \theta) \log M +O(1) .
\end{equation}
Together with Theorem \ref{badlyapproxtheorem}, the asymptotic relation \eqref{quadraticasymptotics} with $\theta=2$ and $\theta=4$ will lead to the precise asymptotics of $\mathbb{E} (W_{\mathbb{T},2}^2 (\mu_N, \lambda))$ and $\mathrm{Var} (W_{\mathbb{T},2}^2 (\mu_N, \lambda))$ as stated in Theorem \ref{maintheorem}.

Relation \eqref{quadraticasymptotics} with $\theta=2$ was first proved by Beck \cite{BE}, who also computed the explicit value of $c(\alpha,2)$ in the special case when $\mathbb{Q}(\alpha)$ has class number 1. We generalize his result to any $\theta>1$, and show how to compute $c(\alpha,\theta)$ for an arbitrary quadratic irrational $\alpha$.

Sections \ref{quadraticformssection} and \ref{quadraticfieldssection} are quick overviews of binary quadratic forms, real quadratic fields and zeta functions of $\mathbb{Z}$-modules. We show the asymptotic relation \eqref{quadraticasymptotics} in Section \ref{diophantinequadraticsection}. We deduce Theorem \ref{maintheorem} from Theorem \ref{badlyapproxtheorem} and \eqref{quadraticasymptotics} in Section \ref{maintheoremproofsection}. Finally, we show how to compute the constants $c(\alpha, \theta)$, $c_1(\alpha)$ and $c_2(\alpha)$, and prove Theorem \ref{c1c2theorem} in Section \ref{computingsection}.

\subsection{Binary quadratic forms}\label{quadraticformssection}

We refer to Landau \cite{LA} for an introduction to binary quadratic forms and Pell's equation, including the proof of all facts stated in this section.

A binary quadratic form is a polynomial $Q(x,y)=ax^2+bxy+cy^2$ with integer coefficients $a,b,c \in \mathbb{Z}$. The discriminant of $Q$ is defined as $D=b^2-4ac$. We assume throughout that $D \neq 0$. Clearly, $D \equiv 0$ or $1 \pmod{4}$. Let $f$ be the largest positive integer such that $f^2 \mid D$ and $D/f^2 \equiv 0$ or $1 \pmod{4}$. There exists a unique squarefree integer $d$ such that
\begin{equation}\label{discriminantDfd}
D = \left\{ \begin{array}{ll} f^2 d & \textrm{if } d \equiv 1 \pmod{4} , \\ 4 f^2 d & \textrm{if } d \equiv 2 \textrm{ or } 3 \pmod{4} . \end{array} \right.
\end{equation}
The discriminant $D$ is called a fundamental discriminant if $f=1$.

The group of $2 \times 2$ integer matrices with determinant $1$,
\[ \mathrm{SL}(2,\mathbb{Z}) = \left\{ \left( \begin{array}{cc} r & s \\ t & u \end{array} \right) \, : \, r,s,t,u \in \mathbb{Z}, \, ru-st=1 \right\} \]
acts on the set of binary quadratic forms by multiplication of the variables $(x,y)$, thought of as a column vector. That is,
\[ (Q \circ B)(x,y) = Q(rx+sy, tx+uy) \quad \textrm{for} \quad B=\left( \begin{array}{cc} r & s \\ t & u \end{array} \right) \in \mathrm{SL}(2,\mathbb{Z}) . \]
Two binary quadratic forms are called equivalent if they are in the same orbit. One readily checks that $\mathrm{gcd}(a,b,c)$ and the discriminant $D$ are invariant under the group action. The number of orbits (equivalence classes) in the set of all binary quadratic forms with $\mathrm{gcd}(a,b,c)=1$ and discriminant $D$ is called the class number of $D$, and is denoted by $h(D)$.

Fix a form $Q(x,y)=ax^2 + bxy + cy^2$ with $\mathrm{gcd}(a,b,c)=1$ whose discriminant $D>0$ is not a perfect square. The automorphism group $\mathrm{Aut}(Q)=\{ B \in \mathrm{SL}(2,\mathbb{Z}) : Q \circ B = Q \}$ has the explicit description
\[ \mathrm{Aut}(Q) = \left\{ \left( \begin{array}{cc} \frac{t-bu}{2} & -cu \\ au & \frac{t+bu}{2} \end{array} \right) \, : \, t,u \in \mathbb{Z}, \, t^2-Du^2=4 \right\} . \]
If $(t_1,u_1)$ and $(t_2,u_2)$ are integer solutions of Pell's equation $t^2-Du^2=4$, and the integers $t_3, u_3$ are defined via
\[ \frac{t_1+u_1 \sqrt{D}}{2} \cdot \frac{t_2+u_2 \sqrt{D}}{2} = \frac{t_3+u_3\sqrt{D}}{2}, \]
then $(t_3,u_3)$ is another integer solution, and in fact
\[ \left( \begin{array}{cc} \frac{t_1-bu_1}{2} & -cu_1 \\ au_1 & \frac{t_1+bu_1}{2} \end{array} \right) \left( \begin{array}{cc} \frac{t_2-bu_2}{2} & -cu_2 \\ au_2 & \frac{t_2+bu_2}{2} \end{array} \right) = \left( \begin{array}{cc} \frac{t_3-bu_3}{2} & -cu_3 \\ au_3 & \frac{t_3+bu_3}{2} \end{array} \right) . \]
The smallest positive solution of Pell's equation $t^2-Du^2=4$ is defined as the (unique) positive integer solution $(t_0,u_0)$ for which $u_0$ is minimal. Letting $\varepsilon=(t_0+u_0\sqrt{D})/2$, the set of all solutions is given by
\[ \left\{ \frac{t+u\sqrt{D}}{2} \, : \, t,u \in \mathbb{Z}, \, t^2-Du^2=4 \right\} = \{ w \varepsilon^j \, : \, w \in \{1,-1 \}, j \in \mathbb{Z} \} . \]

\begin{table}[t]
\centering
\begin{tabular}{|Sc||Sc|Sc|Sc|Sc|Sc|Sc|Sc|Sc|}
\hline
$D$ & 8 & 12 & 20 & 24 & 28 & 32 & 40 & 44 \\
\hline
$\varepsilon$ & $(1+\sqrt{2})^2$ & $2+\sqrt{3}$ & $\left( \frac{1+\sqrt{5}}{2} \right)^6$ & $5+2\sqrt{6}$ & $8+3\sqrt{7}$ & $(1+\sqrt{2})^2$ & $(3+\sqrt{10})^2$ & $10+3\sqrt{11}$ \\
\hline
\end{tabular}

\vspace{5mm}

\begin{tabular}{|Sc||Sc|Sc|Sc|Sc|Sc|Sc|Sc|}
\hline
$D$ & 5 & 13 & 17 & 21 & 29 & 33 & 37 \\
\hline
$\varepsilon$ & $\left( \frac{1+\sqrt{5}}{2} \right)^2$ & $\left( \frac{3+\sqrt{13}}{2} \right)^2$ & $(4+\sqrt{17})^2$ & $\frac{5+\sqrt{21}}{2}$ & $\left( \frac{5+\sqrt{29}}{2} \right)^2$ & $23+4\sqrt{33}$ & $(6+\sqrt{37})^2$ \\
\hline
\end{tabular}

\caption{The value of $\varepsilon$ for some discriminants, expressed as a power of the fundamental unit}
\label{epsilontable}
\end{table}

The group $\mathrm{Aut}(Q)$ also acts on the integer lattice $\mathbb{Z}^2$ by multiplication from the left. Given a nonzero integer $n$, the preimage $Q^{-1}(n)=\{ (x,y) \in \mathbb{Z}^2 \, : \, Q(x,y)=n \}$ is clearly invariant under this action. Let $R_Q(n)=|Q^{-1}(n) / \mathrm{Aut}(Q)|$ denote the number of orbits in $Q^{-1}(n)$. Clearly, $R_{Q \circ B}(n)=R_Q(n)$ for any $B \in \mathrm{SL}(2,\mathbb{Z})$, so $R_Q(n)$ in fact depends only on the equivalence class of $Q$. Note also $Q^{-1}(-n)=(-Q)^{-1}(n)$ and $\mathrm{Aut}(-Q)=\mathrm{Aut}(Q)$, hence $R_Q(-n)=R_{-Q}(n)$.

There is a canonical way to choose a point from each orbit in $Q^{-1}(n)$ as follows. The set of primary representations is defined as
\[ P_Q = \left\{ (x,y) \in \mathbb{Z}^2 \, : \, 2ax+(b-\sqrt{D})y > 0 \textrm{ and } 1 \le \left| \frac{2ax + (b+\sqrt{D})y}{2ax + (b-\sqrt{D})y} \right| < \varepsilon^2 \right\} . \]
It is not difficult to see that for any $(x,y) \in Q^{-1}(n)$, thought of as a column vector $\binom{x}{y}$, there is a unique $B \in \mathrm{Aut}(Q)$ for which $B \binom{x}{y} \in P_Q$. Indeed, if $B \binom{x}{y}= \binom{x'}{y'}$ where
\[ B = \left( \begin{array}{cc} \frac{t-bu}{2} & -cu \\ au & \frac{t+bu}{2} \end{array} \right) \quad \textrm{with} \quad t^2-Du^2=4, \]
then
\[ \begin{split} 2ax' + (b+\sqrt{D})y' &= \left( 2ax + (b+\sqrt{D})y \right) \frac{t+u\sqrt{D}}{2} , \\ 2ax' + (b-\sqrt{D})y' &= \left( 2ax + (b-\sqrt{D})y \right) \frac{t-u\sqrt{D}}{2} . \end{split} \]
If $(t+u\sqrt{D})/2 = w \varepsilon^j$, then $(t-u\sqrt{D})/2 = w \varepsilon^{-j}$, consequently
\[ (x',y') \in P_Q \, \Longleftrightarrow \, \left( 2ax + (b-\sqrt{D})y \right) w \varepsilon^{-j} >0 \textrm{ and } 1 \le \left|\frac{2ax + (b+\sqrt{D})y}{2ax + (b-\sqrt{D})y} \right| \varepsilon^{2j} < \varepsilon^2 . \]
There is a unique $w \in \{ 1,-1 \}$ that satisfies the first condition, and a unique $j \in \mathbb{Z}$ that satisfies the second one. In particular, each orbit in $Q^{-1}(n)$ intersects $P_Q$ in exactly one point, hence $R_Q(n)=|Q^{-1}(n) \cap P_Q|$ is the number of primary representations of $n$.

It turns out that $R_Q(n)$ is always finite. In fact, it satisfies the following estimate.
\begin{lem}\label{RQnlemma} Let $Q(x,y)=ax^2+bxy+cy^2$, and assume $\mathrm{gcd}(a,b,c)=1$ and $D>0$ is not a perfect square. For any nonzero integer $n$, we have $R_Q(n) \ll |n|^{\delta}$ with an arbitrary constant $\delta>0$, and an implied constant depending only on $D$ and $\delta$.
\end{lem}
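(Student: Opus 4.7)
The plan is to exploit the classical identification between representations of $n$ by the quadratic form $Q$ and elements of a $\mathbb{Z}$-module in $\mathbb{Q}(\sqrt{D})$, translating the $\mathrm{Aut}(Q)$-action into multiplication by units, and then reducing $R_Q(n)$ to an ideal-counting problem to which the standard divisor bound applies.

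First, I would use the factorization $4a\,Q(x,y) = (2ax+(b+\sqrt{D})y)(2ax+(b-\sqrt{D})y)$ in $\mathbb{Q}(\sqrt{D})$. Setting $\Phi(x,y)=2ax+(b+\sqrt{D})y$, this is a $\mathbb{Z}$-linear bijection from $\mathbb{Z}^2$ onto the $\mathbb{Z}$-module $A = 2a\mathbb{Z}+(b+\sqrt{D})\mathbb{Z}\subset\mathbb{Q}(\sqrt{D})$, and $N(\Phi(x,y))=4a\,Q(x,y)$, where $N$ is the field norm. Hence $\Phi$ restricts to a bijection
\[ Q^{-1}(n) \longleftrightarrow \{\xi \in A : N(\xi) = 4an\}. \]

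Second, I would compute the effect of a matrix $B=\begin{pmatrix}(t-bu)/2 & -cu\\ au & (t+bu)/2\end{pmatrix}\in\mathrm{Aut}(Q)$ on $\Phi$. A direct calculation, using $b^2-Du^2\cdot\text{(stuff)}$ and $b^2-4ac=D$, shows that $\Phi(B(x,y)^{\top}) = \frac{t+u\sqrt{D}}{2}\,\Phi(x,y)$. Combined with the explicit description of $\mathrm{Aut}(Q)$ from Section~\ref{quadraticformssection}, this identifies the $\mathrm{Aut}(Q)$-orbits on $Q^{-1}(n)$ with the orbits of $\{\xi\in A : N(\xi)=4an\}$ under multiplication by the unit group $\{\pm\varepsilon^j : j\in\mathbb{Z}\}$. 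Consequently
\[ R_Q(n) \;=\; \bigl|\{\xi\in A : N(\xi)=4an\}\big/\langle\pm\varepsilon\rangle\bigr|. \]

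Finally, I would recast this count ideal-theoretically. Let $\mathcal{O}=\{\eta\in\mathbb{Q}(\sqrt{D}):\eta A\subseteq A\}$, which is an order in $\mathbb{Q}(\sqrt{D})$ containing $\pm\varepsilon$, and for which $A$ is an invertible fractional $\mathcal{O}$-ideal. Two elements $\xi_1,\xi_2$ of $A$ with $N(\xi_i)=4an$ lie in the same $\langle\pm\varepsilon\rangle$-orbit if and only if the integral ideals $\xi_1 A^{-1}$ and $\xi_2 A^{-1}$ of $\mathcal{O}$ coincide; each such ideal has norm $|4an|/N(A) \ll_D |n|$. Therefore $R_Q(n)$ is bounded by the number of integral $\mathcal{O}$-ideals of a fixed norm $m \ll_D |n|$, and the classical divisor-type bound for ideals in a quadratic order gives $\ll_\delta m^\delta \ll_{D,\delta} |n|^\delta$, which is the desired estimate.

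The main obstacle is the last step: since $\mathcal{O}$ need not be the maximal order of $\mathbb{Q}(\sqrt{D})$, one has to verify the divisor bound for ideals in a possibly non-maximal quadratic order. This is standard---$\mathcal{O}$-ideals coprime to the conductor correspond bijectively to ideals of the maximal order, and the finitely many primes dividing the conductor contribute only a factor absorbed in $|n|^{o(1)}$---but the bookkeeping with fractional ideals and orders is the only nontrivial point.
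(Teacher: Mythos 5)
Your argument is correct but follows a genuinely different route from the paper. The paper first bounds $R_Q(n)$ by the class-averaged count $\sum_{i=1}^{h(D)} R_{Q_i}(n)$ and then quotes the explicit closed formula of Sun and Williams \cite[Theorem 4.1]{SW} for that sum, which exhibits it directly as a bounded multiple of a divisor-type function, giving $\ll \tau(n) \ll n^{\delta}$ in one step. You instead work directly with $R_Q(n)$: via the norm-form factorization $\Phi(x,y)=2ax+(b+\sqrt{D})y$ you translate representations into elements $\xi\in A$ with $N(\xi)=4an$, identify the $\mathrm{Aut}(Q)$-action with multiplication by the norm-one units $\{\pm\varepsilon^j\}$, and pass from $\xi$ to the integral ideal $\xi A^{-1}$ of the multiplier ring $\mathcal{O}$, which is legitimate because $A$ is a proper and hence invertible $\mathcal{O}$-ideal. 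Two $\xi$ give the same ideal precisely when they differ by a norm-one unit, so $R_Q(n)$ is bounded by the number of integral $\mathcal{O}$-ideals of norm $|4an|/N(A)$, and the divisor bound for ideals in a (possibly non-maximal) quadratic order finishes the job. This correspondence is exactly the one the paper itself uses later (in the proof of Theorem \ref{c1c2theorem}, citing Zagier) to identify $\sum_n R_Q(n)/n^s$ with $\zeta(A,s)$, so your approach is structurally closer to the paper's broader framework even though it differs from the paper's proof of this particular lemma. The trade-off: the paper's proof is shorter given the \cite{SW} reference, while yours is more self-contained and conceptual but requires the (standard, as you note) bookkeeping that ideals coprime to the conductor biject with ideals of the maximal order and that the finitely many conductor primes contribute only $|n|^{o(1)}$.
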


\begin{proof} By definition, the number of equivalence classes in the set of all binary quadratic forms with $\mathrm{gcd}(a,b,c)=1$ and discriminant $D$ is $h(D)$. Let $Q_i$, $1 \le i \le h(D)$ be representatives from each equivalence class, and consider $\sum_{i=1}^{h(D)} R_{Q_i}(n)$, that is, the total number of primary representations of $n$ by all binary quadratic forms of discriminant $D$ up to equivalence. We have $R_Q(n)=R_{Q_i}(n)$ for some $1 \le i \le h(D)$, consequently $R_Q(n) \le \sum_{i=1}^{h(D)} R_{Q_i}(n)$. Since $-Q_i$, $1 \le i \le h(D)$ is also a set of representatives from each equivalence class and $R_{Q_i}(-n)=R_{-Q_i}(n)$, the function $\sum_{i=1}^{h(D)}R_{Q_i}(n)$ is even in the variable $n$. Thus it is enough to prove $\sum_{i=1}^{h(D)}R_{Q_i}(n) \ll n^{\delta}$ for positive integers $n$.

The following explicit formula was proved for positive integers $n$ in \cite[Theorem 4.1]{SW}, see also \cite[p.\ 74]{ZA3}. If $\mathrm{gcd}(n,f^2)$ is not a perfect square, then $\sum_{i=1}^{h(D)}R_{Q_i}(n)=0$. If $\mathrm{gcd}(n,f^2)=m^2$ with a positive integer $m$, then
\begin{equation}\label{sumRQiexplicit}
\sum_{i=1}^{h(D)}R_{Q_i}(n) = \left( m \prod_{p \mid m} \left( 1-\frac{1}{p} \left( \frac{D/m^2}{p} \right) \right) \right) \sum_{k \mid \frac{n}{m^2}} \left( \frac{D/f^2}{k} \right),
\end{equation}
where $(\frac{a}{b})$ is the Kronecker symbol. Since $m \mid f$, and the Kronecker symbol only attains the values $-1$, $0$ and $1$, we deduce
\[ \sum_{i=1}^{h(D)}R_{Q_i}(n) \le \left( f \prod_{p \mid f} \left( 1+\frac{1}{p} \right) \right) \tau (n) , \]
where $\tau (n)$ is the number of positive divisors of $n$. Here $f$ depends only on the discriminant, and $\tau(n) \ll n^{\delta}$ with any constant $\delta>0$, which concludes the proof.
\end{proof}

\subsection{Diophantine sums with quadratic irrationals}\label{diophantinequadraticsection}

We now prove \eqref{quadraticasymptotics} with an explicit constant $c(\alpha, \theta)$.
\begin{thm}\label{quadraticsumtheorem} Let $\alpha$ be a quadratic irrational, let $ax^2+bx+c$ be its minimal polynomial with $a,b,c \in \mathbb{Z}$, $\mathrm{gcd}(a,b,c)=1$ and $a>0$, and let $Q(x,y)=ax^2+bxy+cy^2$ be the corresponding binary quadratic form. For any constant $\theta>1$ and any integer $M \ge 1$, we have
\[ \sum_{m=1}^M \frac{1}{m^{\theta} \| m \alpha \|^{\theta}} = c(\alpha, \theta) \log M +O(1) \]
with an implied constant depending only on $\alpha$ and $\theta$. The constant $c(\alpha, \theta)$ is given by
\[ c(\alpha, \theta) = \frac{D^{\theta/2}}{\log \varepsilon} \sum_{\substack{n \in \mathbb{Z} \\ n \neq 0}} \frac{R_Q(n)}{|n|^{\theta}}, \]
where $D=b^2-4ac>0$ is the discriminant, $R_Q(n)$ is the number of primary representations of $n$, and $\varepsilon=(t_0+u_0\sqrt{D})/2$ with the smallest positive solution $(t_0,u_0)$ of Pell's equation $t^2-Du^2=4$.
\end{thm}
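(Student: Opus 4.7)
The plan is to express $m\|m\alpha\|$ in terms of the values of the binary quadratic form $Q$ and then organize the sum by orbits of $\mathrm{Aut}(Q)$ on $\mathbb{Z}^2$. Let $\alpha, \alpha'$ be the two roots of $at^2+bt+c$, and for $(n,m)\in\mathbb{Z}^2$ set $u = 2a(n-m\alpha')$ and $v = 2a(n-m\alpha)$. A direct computation gives $uv = 4a\,Q(n,m)$, $u - v = \pm 2m\sqrt{D}$, and $|v|/(2a) = |n-m\alpha|$, so the set $S := \{(n,m): m\ge 1,\,|v|\le a\}$ is in bijection with $\{m\ge 1\}$ under the nearest-integer map. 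Multiplying out yields the exact identity
\[ m\|m\alpha\| \;=\; \frac{|Q(n,m)|}{\sqrt{D}} - \operatorname{sgn}(Q(n,m))\,\frac{v^2}{4a\sqrt{D}}, \]
and hence $(m\|m\alpha\|)^{-\theta} = (D^{\theta/2}/|k|^\theta)(1 - \operatorname{sgn}(k)\,t)^{-\theta}$ with $k := Q(n,m)$ and $t := v^2/(4a|k|) \in [0,1)$.

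The sum $\sum_{m=1}^M (m\|m\alpha\|)^{-\theta}$ is then regrouped first by the value $k \in \mathbb{Z}\setminus\{0\}$ and then by $\mathrm{Aut}(Q)$-orbits on $Q^{-1}(k)$. For each orbit, the parameterization of Section \ref{quadraticformssection} indexes points by $j\in\mathbb{Z}$ with $(u_j, v_j) = (\pm\varepsilon^j u_0, \pm\varepsilon^{-j} v_0)$, so $v_j$ decays geometrically with ratio $\varepsilon^{-1}$ while $m_j = (u_j - v_j)/(2\sqrt{D})$ grows. The joint constraints $1\le m_j\le M$ and $|v_j|\le a$ restrict $j$ to an interval of consecutive integers of length $\log M/\log\varepsilon + O(\log|k|)$, and there are $R_Q(k)$ orbits per value of $k$. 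Within an orbit, $t_j$ is geometric with ratio $\varepsilon^{-2}$, and for all $j$ strictly greater than the first admissible index $j_{\min}$ one has $t_j\le\varepsilon^{-2}$; combining the estimate $|(1-\sigma t)^{-\theta}-1|\ll t$ with the geometric tail gives
\[ \sum_{j>j_{\min}}\bigl((1-\operatorname{sgn}(k)\,t_j)^{-\theta} - 1\bigr) \;=\; O(1), \]
so the bulk of the orbit contributes $D^{\theta/2}|k|^{-\theta}(\log M/\log\varepsilon) + O(D^{\theta/2}|k|^{-\theta}(1+\log|k|))$. The remaining boundary term at $j_{\min}$, namely $(m_{j_\min}\|m_{j_\min}\alpha\|)^{-\theta}$, is bounded by $O(D^{\theta/2}/|k|^\theta)$ for all but finitely many small $|k|$ using $m\|m\alpha\|\asymp |k|/\sqrt{D}$, and by the badly-approximable constant $1/C^\theta$ for the finitely many exceptional orbits.

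Summing the per-orbit asymptotic over the $R_Q(k)$ orbits per value $k$ and then over $k\ne 0$, the leading terms assemble to $(D^{\theta/2}\log M/\log\varepsilon)\sum_k R_Q(k)/|k|^\theta = c(\alpha,\theta)\log M$, while the aggregate error is controlled by the convergence of $\sum_k R_Q(k)(1+\log|k|)/|k|^\theta$, which follows from Lemma \ref{RQnlemma} (giving $R_Q(k)\ll|k|^\delta$ for any $\delta>0$) combined with $\theta > 1$. The main technical obstacle is the uniform control of the boundary at $j_{\min}$: because $t_{j_\min}$ can in principle approach $1$ from below, the potentially singular factor $(1-t_{j_\min})^{-\theta}$ must be handled through a careful case analysis of the primary representation's location within its fundamental domain together with the constraint $m_{j_\min}\ge 1$, which is what prevents pathological behaviour from accumulating in the aggregate.
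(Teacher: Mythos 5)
Your proposal follows essentially the same route as the paper's: both reduce the Diophantine sum to counting lattice points in a cone with a prescribed $Q$-value, organized by $\mathrm{Aut}(Q)$-orbits, and both extract the logarithmic leading term from the geometric growth of $m_j$ within each orbit, summing the per-value contributions via Lemma \ref{RQnlemma}. The difference is organizational: the paper first approximates $1/(m^\theta\|m\alpha\|^\theta)$ by $\sum_{C_1 m \le \ell \le C_2 m} D^{\theta/2}/|Q(\ell,m)|^\theta$ with an explicit, separately summable error term, then counts; you use an exact per-$(n,m)$ identity and control the correction inside each orbit. Two things to tighten. First, the identity $m\|m\alpha\| = |Q(n,m)|/\sqrt{D} - \operatorname{sgn}(Q(n,m))\,v^2/(4a\sqrt{D})$ is not exact for all $m$: when $m \le a/(2\sqrt{D})$ the nearest integer to $m\alpha$ can lie on the wrong side of $m\alpha'$ (so $u$ and $Q$ change sign), and the correct sign-free form is $m\|m\alpha\| = |4aQ(n,m)-v^2|/(4a\sqrt{D})$; since this affects only finitely many $m$, it is an $O(1)$ correction, but you should say so. Second, the "main technical obstacle" you flag at the end is not actually an obstacle: because $|v| \le a$, one has $t = v^2/(4a|k|) \le a/(4|k|) \le 1/4$ whenever $|k| \ge a$, so $(1-t_{j_{\min}})^{-\theta}$ is uniformly bounded for all but the finitely many $|k| < a$, and those are disposed of exactly as you indicate using the badly approximable bound. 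With these two clarifications your argument closes cleanly and matches the paper's.
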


\begin{proof} Let $\overline{\alpha}$ be the other root of the minimal polynomial $ax^2+bx+c$. The quadratic form then factors into $Q(x,y)=a(y\alpha -x)(y\overline{\alpha}-x)$. Note that $|\alpha - \overline{\alpha}|=\sqrt{D}/a$.

Fix constants $C_1< \alpha <C_2$ such that $\overline{\alpha} \not\in [C_1,C_2]$. Let $1 \le m \le M$ and $C_1 m \le \ell \le C_2 m$ be integers. We have
\[ \frac{1}{m^{\theta} |m \alpha -\ell|^{\theta}} = \frac{a^{\theta} |m \overline{\alpha} - \ell|^{\theta}}{m^{\theta} a^{\theta}|(m \alpha - \ell)(m \overline{\alpha}-\ell)|^{\theta}} = \frac{a^{\theta} |m \overline{\alpha} -\ell|^{\theta}}{m^{\theta} |Q(\ell,m)|^{\theta}} . \]
Here $||m \overline{\alpha} -\ell| - m|\alpha - \overline{\alpha}|| \le |m \alpha - \ell|$, therefore
\[ |m\overline{\alpha} - \ell|^{\theta} = m^{\theta} |\alpha - \overline{\alpha}|^{\theta} + O(m^{\theta-1} |m \alpha - \ell|) = \frac{m^{\theta} D^{\theta/2}}{a^{\theta}} + O(m^{\theta-1}|m \alpha - \ell|) . \]
The assumption $\overline{\alpha} \not\in [C_1,C_2]$ ensures that $|m \overline{\alpha} - \ell| \gg m$, hence
\[ \frac{1}{m^{\theta} |m \alpha - \ell|^{\theta}} = \frac{D^{\theta/2}}{|Q(\ell, m)|^{\theta}} + O \left( \frac{1}{m^{\theta+1} |m \alpha - \ell|^{\theta-1}} \right) . \]
Note that the integer closest to $m \alpha$ belongs to the interval $[C_1 m , C_2 m]$ for all but $O(1)$ positive integers $m$, thus
\[ \sum_{C_1 m \le \ell \le C_2 m} \frac{1}{m^{\theta} |m \alpha - \ell|^{\theta}} = \frac{1}{m^{\theta} \| m \alpha \|^{\theta}} + O \left( \frac{1}{m^{\theta}} \right) , \]
and
\[ \begin{split} \sum_{C_1 m \le \ell \le C_2 m} \frac{1}{m^{\theta+1} |m \alpha - \ell|^{\theta-1}} &\ll \frac{1}{m^{\theta+1} \| m \alpha \|^{\theta-1}} + \left\{ \begin{array}{ll} 1/m^{2\theta-1} & \textrm{if } \theta <2, \\ (\log m)/m^3 & \textrm{if } \theta=2, \\ 1/m^{\theta+1} & \textrm{if } \theta>2 \end{array} \right. \\ &\ll \frac{1}{m^2} + \frac{1}{m^{2 \theta-1}} . \end{split} \]
In the last step we used $\| m \alpha \| \gg 1/m$. The previous three formulas show that
\[ \frac{1}{m^{\theta} \| m \alpha \|^{\theta}} = \sum_{C_1 m \le \ell \le C_2 m} \frac{D^{\theta/2}}{|Q(\ell,m)|^{\theta}} + O \left( \frac{1}{m^{\theta}} + \frac{1}{m^2} + \frac{1}{m^{2\theta-1}} \right) , \]
and summing over $1 \le m \le M$ leads to
\begin{equation}\label{sumQT}
\sum_{m=1}^M \frac{1}{m^{\theta} \| m \alpha \|^{\theta}} = \sum_{m=1}^M \sum_{C_1 m \le \ell \le C_2 m} \frac{D^{\theta/2}}{|Q(\ell, m)|^{\theta}} + O (1) .
\end{equation}

Now let $n$ be a nonzero integer, and let us estimate the number of solutions to $Q(\ell, m)=n$ with $1 \le m \le M$ and $C_1 m \le \ell \le C_2 m$. Let $\{ (x_k,y_k) : 1 \le k \le R_Q(n) \} = Q^{-1}(n) \cap P_Q$ be the set of primary representations of $n$. By definition, $\xi_k = 2ax_k + (b+\sqrt{D})y_k$ and $\overline{\xi_k} = 2ax_k + (b-\sqrt{D})y_k$ satisfy $\overline{\xi_k}>0$ and $1 \le |\xi_k / \overline{\xi_k}| < \varepsilon^2$. Further, we have
\[ \xi_k \overline{\xi_k} = 4a (ax_k^2 + b x_k y_k + c y_k^2) = 4an, \]
hence $\mathrm{sign}(\xi_k)=\mathrm{sign}(n)$, $|\xi_k| \asymp |n|^{1/2}$ and $\overline{\xi_k} \asymp |n|^{1/2}$. The set of all representations of $n$ is obtained by multiplying the primary representations $(x_k,y_k)$ as column vectors by the elements in $\mathrm{Aut}(Q)$, as explained in Section \ref{quadraticformssection}. We thus have $Q(\ell,m)=n$ if and only if
\[ 2a\ell + (b+\sqrt{D})m = w \varepsilon^j \xi_k \quad \textrm{and} \quad 2a\ell + (b-\sqrt{D})m = w \varepsilon^{-j} \overline{\xi_k} \]
with some $1 \le k \le R_Q(n)$, $w \in \{ 1,-1 \}$ and $j \in \mathbb{Z}$. Fix $k$. Subtracting the equations gives $m = w (\varepsilon^j \xi_k - \varepsilon^{-j} \overline{\xi_k}) / (2 \sqrt{D})$. Up to $O(\log |n|)$ many solutions, this $m$ lies in the interval $[1,M]$ if and only if
\[ w =\mathrm{sign}(n) \textrm{ and } 1 \le j \le \frac{\log M}{\log \varepsilon} ; \qquad \textrm{or} \qquad w =-1 \textrm{ and } -\frac{\log M}{\log \varepsilon} \le j \le -1 . \]
In the first case, $\ell = \frac{-b+\sqrt{D}}{2a} m + O(|n|^{1/2}\varepsilon^{-j})$, whereas in the second case, $\ell =\frac{-b-\sqrt{D}}{2a} m + O(|n|^{1/2}\varepsilon^j)$. Depending on whether $\alpha = \frac{-b+\sqrt{D}}{2a}$ or $\frac{-b-\sqrt{D}}{2a}$, in one of the cases $C_1 m \le \ell \le C_2 m$ holds for all but $O(\log |n|)$ values of $j$, whereas in the other case $C_1 m \le \ell \le C_2 m$ holds only for $O(\log |n|)$ values of $j$. Altogether, we find that the number of integer pairs $(\ell,m)$ such that $1 \le m \le M$, $C_1 m \le \ell \le C_2 m$ and $Q(\ell,m)=n$ is $(R_Q(n) /\log \varepsilon) \log M+O(R_Q(n) \log |n|)$. Therefore,
\[ \sum_{m=1}^M \sum_{C_1 m \le \ell \le C_2 m} \frac{D^{\theta/2}}{|Q(\ell, m)|^{\theta}} = \sum_{\substack{n \in \mathbb{Z} \\ n \neq 0}} \frac{D^{\theta/2}}{|n|^{\theta}} \left( \frac{R_Q(n)}{\log \varepsilon} \log M + O(R_Q(n) \log |n|) \right) . \]
By Lemma \ref{RQnlemma}, $R_Q(n) \ll |n|^{\delta}$ with any $\delta>0$, thus \eqref{sumQT} simplifies to the desired formula
\[ \sum_{m=1}^M \frac{1}{m^{\theta} \| m \alpha \|^{\theta}} = \frac{D^{\theta/2}}{\log \varepsilon} \sum_{\substack{n \in \mathbb{Z} \\ n \neq 0}} \frac{R_Q(n)}{|n|^{\theta}} \log M + O (1). \]
\end{proof}

Note that for any quadratic irrational $\alpha$ and any constant $\theta>1$,
\begin{equation}\label{cfunctionalequation}
c(\alpha, \theta) = c(\alpha+1,\theta) = c(-\alpha, \theta) = c(1/\alpha, \theta) .
\end{equation}
The first two equations are clear from the fact that $\sum_{m=1}^M 1/(m^{\theta} \| m \alpha \|^{\theta})$ is a $1$-periodic and even function of the variable $\alpha$. To see the third equation, note that if $\alpha$ has minimal polynomial $ax^2 + bx+c$ with corresponding binary quadratic form $Q(x,y)=ax^2 + bxy+cy^2$, then $1/\alpha$ has minimal polynomial $cx^2 + bx+a$ with corresponding binary quadratic form $Q'(x,y) = Q(y,x)$. In particular, the discriminant $D$, and consequently $\varepsilon$ are the same for $\alpha$ and $1/\alpha$. Further, for any nonzero integer $n$,
\[ (Q')^{-1} (n) = \left( \begin{array}{cc} 0 & 1 \\ 1 & 0 \end{array} \right) Q^{-1}(n) \quad \textrm{and} \quad \mathrm{Aut}(Q') = \left( \begin{array}{cc} 0 & 1 \\ 1 & 0 \end{array} \right) \mathrm{Aut} (Q) \left( \begin{array}{cc} 0 & 1 \\ 1 & 0 \end{array} \right) . \]
Thus there is a bijection between $(Q')^{-1}(n)/\mathrm{Aut}(Q')$ and $Q^{-1}(n) / \mathrm{Aut} (Q)$. By definition, $R_{Q'}(n)=R_Q(n)$, which finally shows $c(1/\alpha, \theta) = c(\alpha, \theta)$. In particular, $c(\alpha, \theta)$ is invariant under shifts of both the regular and the backward continued fraction expansion of $\alpha$.

\subsection{Proof of Theorem \ref{maintheorem}}\label{maintheoremproofsection}

We now deduce Theorem \ref{maintheorem} from Theorems \ref{badlyapproxtheorem} and \ref{quadraticsumtheorem}.

\begin{proof}[Proof of Theorem \ref{maintheorem}] Assume $\mathbb{E} (X_1) =0$ and $0< \mathbb{E} (X_1^2) < \infty$, and $\alpha$ is a quadratic irrational. Theorem \ref{badlyapproxtheorem} and formulas \eqref{diophantineasymptotics1} and \eqref{diophantineasymptotics2} show that
\[ \mathbb{E} \left( W_{\mathbb{T},2}^2 (\mu_N, \lambda) \right) \sim \frac{L^2}{2 \pi^4 \sigma^2 N} \sum_{1 \le m \le \sqrt{N}} \frac{1}{m^2 \|m L \alpha \|^2}, \quad \mathrm{Var} \left( W_{\mathbb{T},2}^2 (\mu_N, \lambda) \right) \sim \frac{L^4}{4 \pi^8 \sigma^4 N^2} \sum_{1 \le m \le \sqrt{N}} \frac{1}{m^4 \|m L \alpha \|^4} \]
as $N \to \infty$. The claimed relations
\[ \mathbb{E} \left( W_{\mathbb{T},2}^2 (\mu_N, \lambda) \right) \sim \frac{L^2 c_1(L \alpha)}{\sigma^2} \cdot \frac{\log N}{N}, \qquad \mathrm{Var} \left( W_{\mathbb{T},2}^2 (\mu_N, \lambda) \right) \sim \frac{L^4 c_2(L \alpha)}{\sigma^4} \cdot \frac{\log N}{N^2} \]
follow from Theorem \ref{quadraticsumtheorem} with the constants
\[ c_1 (L \alpha) = \frac{c(L \alpha, 2)}{4 \pi^4} \quad \textrm{and} \quad c_2 (L \alpha) = \frac{c(L \alpha, 4)}{8 \pi^8} . \]

Assume now in addition that $\mathbb{E} (|X_1|^p) < \infty$ with some real constant $p>2$. We may assume $p<3$. The characteristic function $\varphi$ of $X_1$ then satisfies $\varphi (x) = 1-(\sigma^2/2)x^2 + O(|x|^p)$ in an open neighborhood of $x=0$. In particular,
\[ \frac{1-|\varphi (2 \pi x)|^2}{|1-\varphi (2 \pi x)|^2} = \frac{1}{\pi^2 \sigma^2 x^2} + O \left( \frac{1}{|x|^{4-p}} \right) \]
in an open neighborhood of $x=0$. By periodicity and continuity,
\[ \frac{1-|\varphi (2 \pi x)|^2}{|1-\varphi (2 \pi x)|^2} = \frac{L^2}{\pi^2 \sigma^2 \| L x \|^2} + O \left( \frac{1}{\| Lx \|^{4-p}} \right) \quad \textrm{uniformly in } x \in \mathbb{R} . \]
Theorem \ref{badlyapproxtheorem} thus gives
\[ \mathbb{E} \left( W_{\mathbb{T},2}^2 (\mu_N, \lambda) \right) = \frac{L^2}{2 \pi^4 \sigma^2 N} \sum_{1 \le m \le \sqrt{N}} \frac{1}{m^2 \| m L \alpha \|^2} + O \left( \frac{1}{N} \sum_{1 \le m \le \sqrt{N}} \frac{1}{m^2 \| m L \alpha \|^{4-p}} \right) . \]
A similar argument shows
\[ \begin{split} \mathrm{Var} \left( W_{\mathbb{T},2}^2 (\mu_N, \lambda) \right) &= \frac{L^4}{4 \pi^8 \sigma^4 N^2} \sum_{1 \le m \le \sqrt{N}} \frac{1}{m^4 \| m L \alpha \|^4} \\ &\phantom{={}}+ O \left( \frac{1}{N^2} \sum_{1 \le m \le \sqrt{N}} \frac{1}{m^4 \| m L \alpha \|^{6-p}} + \frac{(\log \log N)^2}{N^2} \right) . \end{split} \]
Summation by parts and \eqref{diophantine4} in Lemma \ref{diophantinelemma1} with $\theta=4-p$ resp.\ $\theta=6-p$ lead to
\[ \begin{split} \sum_{1 \le m \le \sqrt{N}} \frac{1}{m^2 \| m L \alpha \|^{4-p}} &\ll \sum_{m=1}^{\infty} \left( \frac{1}{m^{p-2}} - \frac{1}{(m+1)^{p-2}} \right) \log m \ll 1, \\ \sum_{1 \le m \le \sqrt{N}} \frac{1}{m^4 \| m L \alpha \|^{6-p}} &\ll \sum_{m=1}^{\infty} \left( \frac{1}{m^{p-2}} - \frac{1}{(m+1)^{p-2}} \right) \log m \ll 1 . \end{split} \]
An application of Theorem \ref{quadraticsumtheorem} together with the previous four formulas finally yield the desired relations
\[ \begin{split} \mathbb{E} \left( W_{\mathbb{T},2}^2 (\mu_N, \lambda) \right) &= \frac{L^2 c_1(L \alpha)}{\sigma^2} \cdot \frac{\log N}{N} + O \left( \frac{1}{N} \right), \\ \mathrm{Var} \left( W_{\mathbb{T},2}^2 (\mu_N, \lambda) \right) &= \frac{L^4 c_2(L \alpha)}{\sigma^4} \cdot \frac{\log N}{N^2} + O \left( \frac{(\log \log N)^2}{N^2} \right) . \end{split} \]
\end{proof}

\subsection{Quadratic fields}\label{quadraticfieldssection}

We refer to Zagier \cite{ZA3} for an introduction to quadratic fields and zeta functions, to Borevich and Shafarevich \cite{BS} for modules and orders in quadratic fields, and to Apostol \cite{AP} for the general theory of Dirichlet characters and $L$-functions.

Let $d>1$ be a squarefree integer, and consider the real quadratic field $\mathbb{Q}(\sqrt{d})$. The conjugate of $\xi=x+y\sqrt{d}$ ($x,y \in \mathbb{Q}$) is denoted by $\overline{\xi}=x-y\sqrt{d}$, and the norm by $N(\xi)=\xi \overline{\xi}$. An element $\xi \in \mathbb{Q}(\sqrt{d})$ is called totally positive if $\xi, \overline{\xi}>0$.

The ring of algebraic integers of $\mathbb{Q}(\sqrt{d})$ is $\mathcal{O}=\mathbb{Z} + \mathbb{Z} \omega_d = \{ x+y\omega_d \, : \, x,y, \in \mathbb{Z} \}$ with
\[ \omega_d = \left\{ \begin{array}{ll} \frac{1+\sqrt{d}}{2} & \textrm{if } d \equiv 1 \pmod{4}, \\ \sqrt{d} & \textrm{if } d \equiv 2 \textrm{ or } 3 \pmod{4} . \end{array} \right. \]
The discriminant of $\mathcal{O}$ is the fundamental discriminant
\[ D_0 = \left\{ \begin{array}{ll} d & \textrm{if } d \equiv 1 \pmod{4}, \\ 4d & \textrm{if } d \equiv 2 \textrm{ or } 3 \pmod{4} . \end{array} \right. \]

A $\mathbb{Z}$-module of rank 2 that is also a ring is called an order. The set of all orders in the field $\mathbb{Q}(\sqrt{d})$ is $\mathcal{O}_f=\mathbb{Z} + \mathbb{Z}f \omega_d$, $f \ge 1$ integer. The discriminant of $\mathcal{O}_f$ is $D=f^2 D_0$. In particular, the maximal order is the ring of algebraic integers $\mathcal{O}=\mathcal{O}_1$.

The set of totally positive units in the ring $\mathcal{O}_f$ is denoted by $U_f^+ = \{ \xi \in \mathcal{O}_f : \xi>0, \, N(\xi)=1 \}$. It is a cyclic group generated by the smallest $\varepsilon \in U_f^+$ such that $\varepsilon>1$. In fact, $\varepsilon=(t_0+u_0\sqrt{D})/2$ with the smallest positive solution $(t_0,u_0)$ of Pell's equation $t^2-Du^2=4$, as before. In particular, $U_f^+=\{ \varepsilon^j : j \in \mathbb{Z} \}$. Note that $\varepsilon$ is a unit also in the ring of algebraic integers $\mathcal{O}$, in particular it is a power of the fundamental unit in $\mathcal{O}$.

Let $A=\mathbb{Z}\xi_1 + \mathbb{Z}\xi_2$ be a $\mathbb{Z}$-module of rank 2 in $\mathbb{Q}(\sqrt{d})$. The discriminant of $A$ is $D(A)=(\xi_1 \overline{\xi_2} - \overline{\xi_1} \xi_2)^2$, and the norm $N(A)$ is defined as the greatest common divisor of $\{ N(\xi) : \xi \in A \}$ (rational, but not necessarily an integer). The set $\{ \xi \in \mathbb{Q}(\sqrt{d}) : \xi A \subseteq A \}$ is an order, hence it is $\mathcal{O}_f$ for some positive interger $f$. We have $D(A)=N(A)^2 D$, where $D=f^2 D_0$ is the discriminant of $\mathcal{O}_f$.

Two $\mathbb{Z}$-modules $A_1$ and $A_2$ are called equivalent if $A_2=\xi A_1$ for some totally positive $\xi \in \mathbb{Q}(\sqrt{d})$. For a fixed nonsquare discriminant $D>0$, the equivalence classes of $\mathbb{Z}$-modules $A$ of rank 2 such that the order $\{ \xi \in \mathbb{Q}(\sqrt{d}) : \xi A \subseteq A \}$ has discriminant $D$, and the equivalence classes of binary quadratic forms $Q(x,y)=ax^2 + bxy+cy^2$ with $\mathrm{gcd}(a,b,c)=1$ and discriminant $D$ are in one-to-one correspondance as follows. The equivalence class of $A=\mathbb{Z}\xi_1 + \mathbb{Z}\xi_2$, with the basis written in an oriented way, that is, $\xi_1 \overline{\xi_2} - \overline{\xi_1} \xi_2>0$, corresponds to the equivalence class of $Q(x,y) = N(x\xi_1 + y \xi_2)/N(A)$. In the reverse direction, the equivalence class of $Q(x,y)=ax^2+bxy+cy^2$ corresponds to the equivalence class of $A=\mathbb{Z}w + \mathbb{Z}w\frac{b-\sqrt{D}}{2a}$, where $w$ is arbitrary with $aN(w)>0$. Note that different choices of $w$ lead to equivalent modules.

\subsubsection{The Dedekind zeta function}

The Dedekind zeta function of $\mathbb{Q}(\sqrt{d})$ is defined as
\[ \zeta_{\mathbb{Q}(\sqrt{d})} (s) = \sum_{0 \neq I \subseteq \mathcal{O}} \frac{1}{N(I)^s}, \qquad \mathrm{Re}\, s >1, \]
where the summation is over all nonzero ideals $I$ in the ring $\mathcal{O}$, and $N(I)=|\mathcal{O} / I|$ is the absolute norm of $I$. Let $(\frac{a}{b})$ denote the Kronecker symbol. The function $\chi_{D_0} (n)=(\frac{D_0}{n})$ is an even, primitive Dirichlet character mod $D_0$. Let $L(s,\chi_{D_0})=\sum_{n=1}^{\infty} \chi_{D_0} (n) n^{-s}$, $\mathrm{Re}\, s>1$ be the corresponding $L$-function. The Dedekind zeta function then factors into $\zeta_{\mathbb{Q}(\sqrt{d})}(s) = \zeta(s) L(s,\chi_{D_0} )$, $\mathrm{Re}\, s>1$, where $\zeta(s)$ is the Riemann zeta function.

All three functions $\zeta(s)$, $\zeta_{\mathbb{Q}(\sqrt{d})}(s)$ and $L(s,\chi_{D_0})$ have meromorphic continuations to $\mathbb{C}$: the former two have a simple pole at $s=1$, whereas $L(s,\chi_{D_0})$ is actually an entire function. All three functions also satisfy functional equations, relating their values at $s$ to their values at $1-s$.

Just like the Riemann zeta function, the Dedekind zeta function also has explicit special values at positive even integers. One possible way of finding them is to first find the special values of $\zeta_{\mathbb{Q}(\sqrt{d})}(s)$ at negative integers \cite[p.\ 51]{ZA3}, and then use the functional equation. For the sake of completeness, we include a proof of the special values at $s=2$ and $s=4$ which avoids the analytic continuation and the functional equation. Table \ref{zetatable} lists the values for some squarefree integers $d>1$.

\begin{table}[t]
\centering
\begin{tabular}{|Sc||Sc|Sc|Sc|Sc|Sc|Sc|Sc|Sc|}
\hline
$d$ & 2 & 3 & 5 & 6 & 7 & 10 & 11 & 13 \\
\hline
$\zeta_{\mathbb{Q}(\sqrt{d})}(2)$ & $\frac{\sqrt{2}\pi^4}{96}$ & $\frac{\sqrt{3}\pi^4}{108}$ & $\frac{2\sqrt{5}\pi^4}{375}$ & $\frac{\sqrt{6}\pi^4}{144}$ & $\frac{\sqrt{7}\pi^4}{147}$ & $\frac{7\sqrt{10} \pi^4}{1200}$ & $\frac{7\sqrt{11}\pi^4}{1452}$ & $\frac{2\sqrt{13}\pi^4}{507}$ \\
\hline
$\zeta_{\mathbb{Q}(\sqrt{d})}(4)$ & $\frac{11 \sqrt{2} \pi^8}{138240}$ & $\frac{23\sqrt{3}\pi^8}{349920}$ & $\frac{4\sqrt{5}\pi^8}{84375}$ & $\frac{29\sqrt{6}\pi^8}{622080}$ & $\frac{113\sqrt{7}\pi^8}{2593080}$ & $\frac{1577\sqrt{10}\pi^8}{43200000}$ & $\frac{2153\sqrt{11}\pi^8}{63249120}$ & $\frac{116\sqrt{13}\pi^8}{3855735}$ \\
\hline
\end{tabular}

\vspace{5mm}

\begin{tabular}{|Sc||Sc|Sc|Sc|Sc|Sc|Sc|Sc|}
\hline
$d$ & 14 & 15 & 17 & 19 & 21 & 22 & 23 \\
\hline
$\zeta_{\mathbb{Q}(\sqrt{d})}(2)$ & $\frac{5\sqrt{14}\pi^4}{1176}$ & $\frac{\sqrt{15}\pi^4}{225}$ & $\frac{4\sqrt{17}\pi^4}{867}$ & $\frac{\sqrt{19}\pi^4}{228}$ & $\frac{4\sqrt{21}\pi^4}{1323}$ & $\frac{23\sqrt{22}\pi^4}{5808}$ & $\frac{5\sqrt{23}\pi^4}{1587}$ \\
\hline
$\zeta_{\mathbb{Q}(\sqrt{d})}(4)$ & $\frac{2503\sqrt{14}\pi^8}{82978560}$ & $\frac{179\sqrt{15}\pi^8}{6075000}$ & $\frac{328\sqrt{17}\pi^8}{11275335}$ & $\frac{14933\sqrt{19}\pi^8}{562986720}$ & $\frac{88\sqrt{21}\pi^8}{3750705}$ & $\frac{24889\sqrt{22}\pi^8}{1011985920}$ & $\frac{7093\sqrt{23}\pi^8}{302228280}$ \\
\hline
\end{tabular}
\caption{The values of $\zeta_{\mathbb{Q}(\sqrt{d})}(2)$ and $\zeta_{\mathbb{Q}(\sqrt{d})}(4)$ for all squarefree integers $2 \le d \le 23$}
\label{zetatable}
\end{table}

\begin{lem}\label{dedekindat24lemma} For any squarefree integer $d>1$,
\[ \begin{split} \zeta_{\mathbb{Q}(\sqrt{d})} (2) &= \frac{\pi^4}{6 D_0^{5/2}} \sum_{k=1}^{D_0-1} \chi_{D_0} (k) k^2 , \\ \zeta_{\mathbb{Q}(\sqrt{d})} (4) &= \frac{\pi^8}{270 D_0^{5/2}} \sum_{k=1}^{D_0-1} \chi_{D_0} (k) \left( 2k^2 - \frac{k^4}{D_0^2} \right) . \end{split} \]
\end{lem}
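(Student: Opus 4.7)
The plan is to use the factorization $\zeta_{\mathbb{Q}(\sqrt{d})}(s) = \zeta(s) L(s, \chi_{D_0})$ stated above, combined with the Euler values $\zeta(2) = \pi^2/6$ and $\zeta(4) = \pi^4/90$, so that everything reduces to evaluating $L(2,\chi_{D_0})$ and $L(4,\chi_{D_0})$ in closed form.

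The main tool is the Fourier inversion identity for primitive Dirichlet characters. Since $\chi_{D_0}$ is a primitive, real, even Dirichlet character modulo $D_0$, the Gauss sum $\tau(\chi_{D_0})=\sum_{k=1}^{D_0-1}\chi_{D_0}(k)e^{2\pi i k/D_0}$ equals $\sqrt{D_0}$ (Gauss's classical evaluation for real quadratic discriminants), and hence
\[ \chi_{D_0}(n) = \frac{1}{\sqrt{D_0}} \sum_{k=1}^{D_0-1} \chi_{D_0}(k) e^{2\pi i n k/D_0} . \]
Substituting into the Dirichlet series and using evenness of $\chi_{D_0}$ to symmetrize the exponentials into cosines, I obtain
\[ L(2k,\chi_{D_0}) = \frac{1}{\sqrt{D_0}} \sum_{k'=1}^{D_0-1} \chi_{D_0}(k') \sum_{n=1}^\infty \frac{\cos(2\pi n k'/D_0)}{n^{2k}} . \]
The inner sum is the classical Fourier expansion of a Bernoulli polynomial:
\[ \sum_{n=1}^\infty \frac{\cos(2\pi n x)}{n^{2k}} = \frac{(-1)^{k-1}(2\pi)^{2k}}{2(2k)!} B_{2k}(x), \qquad 0 \le x \le 1, \]
which for $k=1,2$ gives $\pi^2(x^2-x+1/6)$ and $-(\pi^4/3)(x^4-2x^3+x^2-1/30)$ respectively.

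The final step is algebraic cleanup. For $s=2$, expanding $B_2(k'/D_0)$ and using the parity identities
\[ \sum_{k'=1}^{D_0-1}\chi_{D_0}(k')=0 \quad \text{and} \quad \sum_{k'=1}^{D_0-1}\chi_{D_0}(k')k'=0 \]
(the latter follows from the substitution $k' \mapsto D_0-k'$ and evenness of $\chi_{D_0}$) kills the constant and linear terms, leaving
\[ L(2,\chi_{D_0}) = \frac{\pi^2}{D_0^{5/2}} \sum_{k=1}^{D_0-1}\chi_{D_0}(k)k^2 . \]
Multiplying by $\zeta(2)=\pi^2/6$ yields the first claim. For $s=4$, the same substitution $k' \mapsto D_0-k'$ gives the additional reduction $\sum \chi_{D_0}(k')(k')^3 = (3D_0/2)\sum\chi_{D_0}(k')(k')^2$, which combined with the $B_4$ expansion leaves exactly the combination $(2k^2 - k^4/D_0^2)/D_0^2$ up to an overall factor of $\pi^4/(3D_0^{5/2})$; multiplying by $\zeta(4)=\pi^4/90$ gives the denominator $270$ and the second claim.

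The only genuinely nontrivial input is Gauss's sign determination $\tau(\chi_{D_0})=\sqrt{D_0}$ (rather than $-\sqrt{D_0}$), which I would simply cite. The remaining work is bookkeeping of the parity sums and the Bernoulli expansions, and should present no obstacle.
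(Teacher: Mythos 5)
Your proposal is correct and follows essentially the same route as the paper: both derive $L(2k,\chi_{D_0})$ from the Gauss-sum / Fourier-inversion identity for the primitive character, expand the resulting cosine series via Bernoulli polynomials, and simplify with the vanishing of $\sum\chi_{D_0}(k)$, $\sum\chi_{D_0}(k)k$, and the reflection identity $\sum\chi_{D_0}(k)k^3=(3D_0/2)\sum\chi_{D_0}(k)k^2$. The only cosmetic difference is that you invoke the inversion formula $\chi_{D_0}(n)=D_0^{-1/2}\sum_k\chi_{D_0}(k)e^{2\pi ink/D_0}$ directly, while the paper arrives at the same intermediate expression by rearranging the Dirichlet series and using primitivity.
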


\begin{proof} From the observation
\[ L(s,\chi_{D_0} ) = \sum_{k=1}^{D_0-1} \chi_{D_0} (k) \sum_{n=0}^{\infty} \frac{1}{(nD_0+k)^s} , \qquad \mathrm{Re} \, s >1 \]
and the fact that $\chi_{D_0}$ is a primitive Dirichlet character mod $D_0$, one easily deduces
\[ L(s,\chi_{D_0}) = \frac{1}{G(\chi_{D_0})} \sum_{k=1}^{D_0-1} \chi_{D_0} (k) \sum_{n=1}^{\infty} \frac{e^{2 \pi i n k/D_0}}{n^s}, \qquad \mathrm{Re} \, s >1, \]
where the Gauss sum $G(\chi_{D_0})=\sum_{k=1}^{D_0} \chi_{D_0} (k) e^{2 \pi i k/D_0}$ has value $G(\chi_{D_0}) = D_0^{1/2}$, see \cite[p.\ 262]{AP} and \cite[p.\ 49]{IK}.

Since $\chi_{D_0}(k)$ is even and periodic mod $D_0$, for real values of $s$ we have
\[ L(s,\chi_{D_0}) = \frac{1}{D_0^{1/2}} \sum_{k=1}^{D_0-1} \chi_{D_0} (k) \sum_{n=1}^{\infty} \frac{\cos(2 \pi n k/D_0)}{n^s}, \qquad s>1. \]
For even integer values of $s>1$, the trigonometric series in the previous formula is a Bernoulli polynomial. In particular,
\[ \begin{split} L(2,\chi_{D_0}) &= \frac{\pi^2}{D_0^{1/2}} \sum_{k=1}^{D_0-1} \chi_{D_0} (k) \left( \frac{k^2}{D_0^2} - \frac{k}{D_0} + \frac{1}{6} \right) , \\ L(4,\chi_{D_0}) &= - \frac{\pi^4}{3D_0^{1/2}} \sum_{k=1}^{D_0-1} \chi_{D_0} (k) \left( \frac{k^4}{D_0^4} - 2 \frac{k^3}{D_0^3} + \frac{k^2}{D_0^2} - \frac{1}{30} \right) . \end{split} \]
As $\chi_{D_0}$ is an even, primitive Dirichlet character, we have $\sum_{k=1}^{D_0-1} \chi_{D_0} (k) =0$ and $\sum_{k=1}^{D_0-1} \chi_{D_0} (k) k =0$. Further,
\[ \sum_{k=1}^{D_0-1} \chi_{D_0} (k) k^3 = \sum_{k=1}^{D_0-1} \chi_{D_0} (k) (D_0-k)^3 = \sum_{k=1}^{D_0-1} \chi_{D_0} (k) (3D_0 k^2 - k^3) , \]
consequently $\sum_{k=1}^{D_0-1} \chi_{D_0} (k) k^3 = (3D_0/2) \sum_{k=1}^{D_0-1} \chi_{D_0} (k) k^2$. Therefore,
\[ \begin{split} L(2,\chi_{D_0}) &= \frac{\pi^2}{D_0^{5/2}} \sum_{k=1}^{D_0-1} \chi_{D_0} (k) k^2 , \\ L(4,\chi_{D_0}) &= \frac{\pi^4}{3 D_0^{5/2}} \sum_{k=1}^{D_0-1} \chi_{D_0} (k) \left( 2k^2 - \frac{k^4}{D_0^2} \right) , \end{split} \]
and the claim follows from the factorization $\zeta_{\mathbb{Q}(\sqrt{d})}(s) = \zeta(s) L(s,\chi_{D_0})$.
\end{proof}

\subsubsection{The zeta function of a module}

Let $A=\mathbb{Z}\xi_1 + \mathbb{Z}\xi_2$ be a $\mathbb{Z}$-module of rank 2, and let $\{ \xi \in \mathbb{Q}(\sqrt{d}) : \xi A \subseteq A \}=\mathcal{O}_f$ with an integer $f \ge 1$. Thus $A$ has discriminant $N(A)^2 D=N(A)^2 f^2 D_0$. The zeta function of $A$ is defined as
\[ \zeta (A,s) = \sum_{\substack{\xi \in A /U_f^+ \\ \xi \textrm{ totally positive}}} \frac{N(A)^s}{N(\xi)^s}, \qquad \mathrm{Re} \, s>1. \]
Note that $\zeta(A,s)$ depends only on the equivalence class of $A$. Clearly, the zeta function of the conjugate module $\overline{A}=\mathbb{Z}\overline{\xi_2} + \mathbb{Z}\overline{\xi_1}$ is the same as that of $A$. The zeta function $\zeta(A,s)$ also has a meromorphic continuation to $\mathbb{C}$ with a simple pole at $s=1$.

The special values of $\zeta(A,k)$ at integers $k>1$ were found by Vlasenko and Zagier \cite{VZ}, and involve special functions such as polylogarithms. On the other hand, a functional equation due to Kramer \cite{KR} (see also \cite[p.\ 42]{VZ}) states that with an arbitrary $w \in \mathbb{Q}(\sqrt{d})$ such that $N(w)<0$, for all integers $k>1$,
\begin{equation}\label{zetafunctionalequation}
\zeta(A,k) + (-1)^k \zeta(wA,k) = \frac{2^{2k-1} \pi^{2k}}{D^{k-1/2} ((k-1)!)^2} \zeta (A,1-k) .
\end{equation}
The values of $\zeta(A,1-k)$ are rational, and were found by Shintani \cite{SH} and Zagier \cite{ZA2}. We refer to \cite{BG,GP,JL,ZA1} for further results on special values of zeta functions.

In the special case $f=1$, the equivalence classes of $\mathbb{Z}$-modules of rank 2 are in one-to-one correspondence with the narrow ideal classes in the ring of algebraic integers $\mathcal{O}$. Summing over all narrow ideal classes leads back to the Dedekind zeta function: $\sum_{A \textrm{ narrow ideal class}} \zeta(A,s) = \zeta_{\mathbb{Q}(\sqrt{d})}(s)$.

\subsection{Computing the constants $c_1(\alpha)$ and $c_2(\alpha)$}\label{computingsection}

We saw that Theorem \ref{maintheorem} holds with the constants $c_1 (\alpha) = c(\alpha, 2)/(4 \pi^4)$ and $c_2 (\alpha) = c(\alpha, 4)/(8 \pi^8)$, where
\[ c(\alpha, \theta) = \frac{D^{\theta/2}}{\log \varepsilon} \sum_{\substack{n \in \mathbb{Z} \\ n \neq 0}} \frac{R_Q(n)}{|n|^{\theta}} = \frac{D^{\theta/2}}{\log \varepsilon} \sum_{n=1}^{\infty} \frac{R_Q(n) + R_{-Q}(n)}{n^{\theta}} . \]
In particular, the functional equations \eqref{c1c2functionalequation} are special cases of \eqref{cfunctionalequation}.

It is an unfortunate fact that there is no general formula for the number of primary representations $R_Q (n)$. It is nevertheless possible to explicitly compute $c(\alpha, \theta)$ for integer values of $\theta>1$, in particular for $\theta=2$ and $\theta=4$, by expressing it in terms of zeta functions. We first prove Theorem \ref{c1c2theorem}, then consider some special cases in Sections \ref{allformssection} and \ref{genussection}. The case of a general quadratic irrational $\alpha$ is discussed in Section \ref{generalquadraticsection}.

\begin{proof}[Proof of Theorem \ref{c1c2theorem}] Let $\alpha$ be a quadratic irrational, let $ax^2+bx+c$ be its minimal polynomial with $a,b,c \in \mathbb{Z}$, $\mathrm{gcd}(a,b,c)=1$ and $a>0$, and let $Q(x,y)=ax^2 + bxy + cy^2$ be the corresponding binary quadratic form with discriminant $D=b^2-4ac$. Let $A_1=\mathbb{Z} + \mathbb{Z} \frac{b-\sqrt{D}}{2a}$ be the $\mathbb{Z}$-module corresponding to $Q$. In particular, $Q(x,y) = N(\xi)/N(A_1)$ with $\xi=x+y\frac{b-\sqrt{D}}{2a}$. As observed by Zagier \cite[p.\ 99]{ZA3}, for any positive integer $n$,
\[ |\{ (x,y) \in \mathbb{Z}^2 : Q(x,y)=n \} / \mathrm{Aut}(Q)| = |\{ \xi \in A_1 : \xi \textrm{ totally positive, } N(\xi)=n N(A_1) \}/U_f^+| , \]
since the equivalence classes are in one-to-one correpondance by the explicit description of $\mathrm{Aut}(Q)$ and $U_f^+$ in terms of $\varepsilon$. Thus by definition,
\[ \sum_{n=1}^{\infty} \frac{R_Q(n)}{n^s} = \zeta(A_1, s), \qquad \mathrm{Re} \, s>1. \]
We have either $\alpha=\frac{-b-\sqrt{D}}{2a}$ or $\alpha=\frac{-b+\sqrt{D}}{2a}$. Since the zeta function of a module and its conjugate are the same, we have $\zeta(A_1,s)=\zeta(A,s)$ with $A=\mathbb{Z} + \mathbb{Z} \alpha$. The $\mathbb{Z}$-module corresponding to $-Q(x,y)=-ax^2-bxy-cy^2$ is $A_2=\mathbb{Z}w+\mathbb{Z}w\frac{-b-\sqrt{D}}{-2a}$, where $w \in \mathbb{Q}(\sqrt{d})$ is arbitrary with $N(w)<0$. We similarly deduce
\[ \sum_{n=1}^{\infty} \frac{R_{-Q}(n)}{n^s} = \zeta(A_2, s), \qquad \mathrm{Re} \, s>1, \]
and here $\zeta(A_2,s)=\zeta(wA,s)$. Therefore,
\[ \sum_{n=1}^{\infty} \frac{R_Q(n) + R_{-Q}(n)}{n^s} = \zeta(A,s) + \zeta(wA,s), \qquad \mathrm{Re}\, s>1, \]
and setting $s=2$ resp.\ $s=4$ gives
\begin{equation}\label{c1c2zeta24}
c_1(\alpha) = \frac{D (\zeta(A,2) + \zeta(wA,2))}{4 \pi^4 \log \varepsilon} \quad \textrm{and} \quad c_2(\alpha) = \frac{D^2 (\zeta(A,4) + \zeta(wA,4))}{8 \pi^8 \log \varepsilon} .
\end{equation}
The functional equation \eqref{zetafunctionalequation} with $k=2$ resp.\ $k=4$ finally leads to the desired formulas
\[ c_1(\alpha) = \frac{2\zeta(A,-1)}{D^{1/2} \log \varepsilon} \quad \textrm{and} \quad c_2(\alpha) = \frac{4\zeta(A,-3)}{9 D^{3/2} \log \varepsilon} . \]
\end{proof}

\subsubsection{A special case with wide class number 1}\label{allformssection}

We first compute the constants $c_1(\alpha)$ and $c_2(\alpha)$ in a simple special case. For the sake of further simplicity, we also assume $f=1$, that is, the discriminant $D$ of the minimal polynomial of $\alpha$ is a fundamental discriminant.
\begin{lem}\label{classnumber12lemma} Assume $f=1$, and either $h(D)=1$, or $h(D)=2$ and $Q$ and $-Q$ are inequivalent for some binary quadratic form $Q$ of discriminant $D$. Then
\[ c_1 (\alpha) =  \frac{D}{2 \pi^4 h(D) \log \varepsilon} \zeta_{\mathbb{Q}(\sqrt{d})}(2) \quad \textrm{and} \quad c_2(\alpha) =  \frac{D^2}{4 \pi^8 h(D) \log \varepsilon} \zeta_{\mathbb{Q}(\sqrt{d})}(4) . \]
\end{lem}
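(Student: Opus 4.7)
The plan is to build on the formulas \eqref{c1c2zeta24} derived inside the proof of Theorem \ref{c1c2theorem},
\[ c_1(\alpha) = \frac{D(\zeta(A,2)+\zeta(wA,2))}{4\pi^4 \log\varepsilon}, \qquad c_2(\alpha) = \frac{D^2(\zeta(A,4)+\zeta(wA,4))}{8\pi^8 \log\varepsilon}, \]
where $A = \mathbb{Z}+\mathbb{Z}\alpha$ and $w \in \mathbb{Q}(\sqrt{d})$ is any element with $N(w)<0$. Since $f=1$, the order of $A$ is the maximal order $\mathcal{O}$, so by the discussion at the end of Section \ref{quadraticfieldssection} the equivalence classes of rank-$2$ $\mathbb{Z}$-modules with this property are precisely the $h(D)$ narrow ideal classes, and $\sum_{[B]}\zeta(B,s) = \zeta_{\mathbb{Q}(\sqrt{d})}(s)$ when $[B]$ ranges over a full set of representatives. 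The whole argument therefore reduces to expressing $\zeta(A,s)+\zeta(wA,s)$ as a rational multiple of $\zeta_{\mathbb{Q}(\sqrt{d})}(s)$ and then specializing to $s=2,4$.

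The key input, already recorded inside the proof of Theorem \ref{c1c2theorem}, is that under the form--module bijection of Section \ref{quadraticfieldssection}, the module $A$ corresponds to the form $Q$ while $wA$ corresponds to $-Q$. I will then split into the two cases of the hypothesis. If $h(D)=1$, there is a unique equivalence class, so $[A]=[wA]$ both equal that class and $\zeta(A,s)=\zeta(wA,s)=\zeta_{\mathbb{Q}(\sqrt{d})}(s)$, giving sum $2\zeta_{\mathbb{Q}(\sqrt{d})}(s)$. If $h(D)=2$ with $Q\not\sim -Q$, then $[A]\neq[wA]$ exhaust the two distinct narrow ideal classes, so their zeta functions sum directly to $\zeta_{\mathbb{Q}(\sqrt{d})}(s)$. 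The two cases combine into the uniform identity
\[ \zeta(A,s)+\zeta(wA,s) = \frac{2}{h(D)}\zeta_{\mathbb{Q}(\sqrt{d})}(s), \]
and substituting this into the displayed formulas for $c_1(\alpha)$ and $c_2(\alpha)$ above yields the claim.

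I expect no significant obstacle: the only step that requires minor care is confirming that the equivalence class $[wA]$ is well-defined independently of the choice of $w$ with $N(w)<0$, which follows from the elementary observation that any two such $w$'s differ by a factor of positive norm, hence either totally positive or (using $-M=M$ for $\mathbb{Z}$-modules $M$) with totally positive negative. With this in hand the proof reduces to the case analysis above and is essentially a bookkeeping exercise.
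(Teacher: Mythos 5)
Your proof is correct and follows essentially the same route as the paper: the paper's own argument is to invoke \eqref{c1c2zeta24} and observe that $\zeta(A,s)+\zeta(wA,s)$ equals $2\zeta_{\mathbb{Q}(\sqrt{d})}(s)$ when $h(D)=1$ and $\zeta_{\mathbb{Q}(\sqrt{d})}(s)$ when $h(D)=2$ with $Q\not\sim -Q$, which is exactly your case analysis. The only stylistic difference is that you unify the two cases into the single identity $\zeta(A,s)+\zeta(wA,s)=\tfrac{2}{h(D)}\zeta_{\mathbb{Q}(\sqrt{d})}(s)$, which matches the statement's common denominator but adds nothing substantive.
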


\begin{proof} This immediately follows from \eqref{c1c2zeta24} and
\[ \zeta (A,s) + \zeta(wA,s) = \left\{ \begin{array}{ll} 2 \zeta_{\mathbb{Q}(\sqrt{d})}(s) & \textrm{if } h(D)=1, \\ \zeta_{\mathbb{Q}(\sqrt{d})}(s) & \textrm{if } h(D)=2 \textrm{ and } Q, -Q \textrm{ are inequivalent}.  \end{array} \right. \]
\end{proof}

We mention that the conditions of Lemma \ref{classnumber12lemma} are satisfied for a fundamental discriminant if and only if the class number of $\mathbb{Q}(\sqrt{d})$ is 1, that is, the wide ideal class group of the ring of algebraic integers is trivial. For the convenience of the reader, we include the list of all fundamental discriminants $1 \le D \le 100$ to which Lemma \ref{classnumber12lemma} applies: we have $h(D)=1$ for
\[ D=5, 8, 13, 17, 29, 37, 41, 53, 61, 73, 89, 97, \]
whereas $h(D)=2$ and $Q$, $-Q$ are inequivalent for some $Q$ for
\[ D=12, 21, 24, 28, 33, 44, 56, 57, 69, 76, 77, 88, 92, 93. \]
We refer to Buell \cite{BU} for extensive tables of class numbers.

Lemmas \ref{dedekindat24lemma} and \ref{classnumber12lemma} provide a convenient way of computing the constants $c_1(\alpha)$ and $c_2(\alpha)$. As an example, consider the golden ratio $\alpha=(1+\sqrt{5})/2$. The minimal polynomial $x^2-x-1$ has discriminant $D=5$, and the class number is $h(D)=1$. The smallest positive solution of Pell's equation $t^2-5u^2=4$ is $(t_0,u_0)=(3,1)$, hence $\varepsilon=\frac{3+\sqrt{5}}{2} = (\frac{1+\sqrt{5}}{2})^2$ (cf.\ Table \ref{epsilontable}). By Lemma \ref{dedekindat24lemma} (cf.\ Table \ref{zetatable}),
\[ \zeta_{\mathbb{Q}(\sqrt{5})}(2) = \frac{\pi^4}{6\cdot 5^{5/2}} \left( 1^2 - 2^2 - 3^2 + 4^2 \right) = \frac{2\sqrt{5}\pi^4}{375}, \]
and
\[ \begin{split} \zeta_{\mathbb{Q}(\sqrt{5})}(4) &= \frac{\pi^8}{270 \cdot 5^{5/2}} \left( \left( 2 \cdot 1^2 - \frac{1^4}{5^2} \right) - \left( 2 \cdot 2^2 - \frac{2^4}{5^2} \right) - \left( 2 \cdot 3^2 - \frac{3^4}{5^2} \right) + \left( 2 \cdot 4^2 - \frac{4^4}{5^2} \right) \right) \\ &= \frac{4 \sqrt{5}\pi^8}{84375} . \end{split} \]
Lemma \ref{classnumber12lemma} thus gives the explicit values
\[ \begin{split} c_1 \left( \frac{1+\sqrt{5}}{2} \right) &= \frac{5}{2 \pi^4 \log (\frac{1+\sqrt{5}}{2})^2} \cdot \frac{2 \sqrt{5}\pi^4}{375} = \frac{\sqrt{5}}{150 \log \frac{1+\sqrt{5}}{2}}, \\ c_2 \left( \frac{1+\sqrt{5}}{2} \right) &= \frac{5^2}{4 \pi^8 \log (\frac{1+\sqrt{5}}{2})^2} \cdot \frac{4 \sqrt{5} \pi^8}{84375} = \frac{\sqrt{5}}{6750 \log \frac{1+\sqrt{5}}{2}} . \end{split} \]
The values of $c_1(\alpha)$ and $c_2(\alpha)$ for $\alpha=\sqrt{2}, \sqrt{3}, \sqrt{6}, \sqrt{7}, \sqrt{11}$ and $\sqrt{14}$ in Table \ref{c1c2table} can be computed the same way from Lemmas \ref{dedekindat24lemma} and \ref{classnumber12lemma}.

\subsubsection{A special case with simple class group}\label{genussection}

Gauss introduced a binary operation called composition on the set of equivalence classes of binary quadratic forms with $\mathrm{gcd}(a,b,c)=1$ and discriminant $D$, which turns this set into a finite Abelian group of order $h(D)$, called the class group. The set of all squares forms a subgroup, and the cosets of this subgroup (that is, the elements of the quotient group) are called genera. We refer to Buell \cite{BU} for further details.

In \cite[Theorem 3]{MW}, an explicit formula was deduced for the sum of $\sum_{n=1}^{\infty} R_Q(n)/n^s$ over inequivalent binary quadratic forms $Q$ in a single genus. For the sake of simplicity, we only discuss fundamental discriminants, although the result actually applies to any discriminant.

Let $D>1$ be a fundamental discriminant. Each genus $G$ consists of $h(D)/2^{\omega(D)-1}$ equivalence classes of binary quadratic forms, where $\omega(D)$ is the number of distinct prime divisors of $D$. The elements of the set
\[ \{ -4,8,-8 \} \cup \{ p \, : \, p \equiv 1 \pmod{4} \textrm{ prime} \} \cup \{ -p \, : \, p \equiv 3 \pmod{4} \textrm{ prime} \} \]
are called prime discriminants. Any fundamental discriminant can be uniquely written as a product of pairwise coprime prime discriminants, and in fact any such product is a fundamental discriminant. Let $P(D)$ be the set of prime discriminants for which $D=\prod_{p^* \in P(D)}p^*$, and let $F(D)$ be the set of all products of distinct elements of $P(D)$ (including 1 as the empty product). In particular, $F(D)$ is the set of (positive or negative) fundamental discriminants that divide $D$. Fix a genus $G$, and let $Q_i$, $1 \le i \le h(D)/2^{\omega(D)-1}$ be representatives from the equivalence classes in $G$. For any $p^* \in P(D)$, define $\gamma_{p^*}(G)=(\frac{p^*}{k})$, where $k$ is a positive integer that is coprime with $p^*$ and is represented by $Q_i$ for some $1 \le i \le h(D)/2^{\omega(D)-1}$; it turns out that this does not depend on the choice of $k$. We extend it multiplicatively as $\gamma_{D'}(G) = \prod_{p^* \in P(D')} \gamma_{p^*} (G)$, $D' \in F(D)$ with the convention that $\gamma_1 (G)=1$ is the empty product. Then \cite[Theorem 3]{MW} states that
\begin{equation}\label{genusformula}
\sum_{i=1}^{h(D)/2^{\omega(D)-1}} \sum_{n=1}^{\infty} \frac{R_{Q_i}(n)}{n^s} = \frac{1}{2^{\omega(D)}} \sum_{D' \in F(D)} \gamma_{D'} (G) L(s,\chi_{D'}) L(s,\chi_{D/D'}), \quad \mathrm{Re} \, s>1.
\end{equation}
Note that $D/D'$ is also a fundamental discriminant for any $D' \in F(D)$. We use the convention $\chi_1(n)=(\frac{1}{n})=1$, so $L(s,\chi_1) = \zeta(s)$.

Formula \eqref{genusformula} involves Dirichlet characters $\chi_{D'}$ with negative discriminants $D'$. These are defined the same way in terms of the Kronecker symbol as $\chi_{D'}(n)=(\frac{D'}{n})$. As we will see, in our application the $L$-functions of these Dirichlet characters with negative discriminants will actually cancel, and Lemma \ref{genuslemma} only involves special values of Dedekind zeta functions of real quadratic fields.

In the special case when $h(D)=2^{\omega(D)-1}$, each genus contains only a single equivalence class, and formula \eqref{genusformula} immediately yields the values of $c(\alpha, \theta)$, $c_1(\alpha)$ and $c_2(\alpha)$. Note that this happens if and only if the class group is isomorphic to a direct product $\mathbb{Z}/(2\mathbb{Z}) \times \cdots \times \mathbb{Z}/(2\mathbb{Z})$, so that the subgroup of squares is trivial. We mention that \eqref{genusformula} also leads to the values of $c(\alpha,\theta)$, $c_1(\alpha)$ and $c_2(\alpha)$ when the equivalence classes of $Q$ and $-Q$ form a single genus of size 2. We omit the details, as in Section \ref{generalquadraticsection} we give a much more general method for computing these constants for arbitrary quadratic irrational $\alpha$.

In particular, \eqref{genusformula} leads to the following result for the principal form.
\begin{lem}\label{genuslemma} Let $d>1$ be a squarefree integer, and let
\[ Z_{d'}(s) = \left\{ \begin{array}{ll} \zeta(s) & \textrm{if } d'=1, \\ \zeta_{\mathbb{Q}(\sqrt{d'})} (s) / \zeta(s) & \textrm{if } d'>1 \textrm{ squarefree}. \end{array} \right. \]
\begin{enumerate}
\item[(i)] If $d \equiv 1 \pmod{4}$ and $h(d)=2^{\omega(d)-1}$, then
\[ \begin{split} c_1 \bigg( \frac{1+\sqrt{d}}{2} \bigg) &=\frac{d}{4 \pi^4 h(d) \log \varepsilon} \sum_{\substack{d' \mid d \\ d' \equiv 1 \pmod{4}}} Z_{d'}(2) Z_{d/d'}(2), \\  c_2 \bigg( \frac{1+\sqrt{d}}{2} \bigg) &=\frac{d^2}{8 \pi^8 h(d) \log \varepsilon} \sum_{\substack{d' \mid d \\ d' \equiv 1 \pmod{4}}} Z_{d'}(4) Z_{d/d'}(4) . \end{split} \]
\item[(ii)] If $d \equiv 3 \pmod{4}$ and $h(4d)=2^{\omega(d)}$, then
\[ \begin{split} c_1 (\sqrt{d}) &=\frac{d}{\pi^4 h(4d) \log \varepsilon} \sum_{d' \mid d} Z_{d'}(2) Z_{d/d'}(2), \\  c_2 (\sqrt{d}) &=\frac{2 d^2}{\pi^8 h(4d) \log \varepsilon} \sum_{d' \mid d} Z_{d'}(4) Z_{d/d'}(4) . \end{split} \]
\item[(iii)] If $d$ is even and $h(4d) = 2^{\omega(d)-1}$, then
\[ \begin{split} c_1 (\sqrt{d}) &=\frac{d}{\pi^4 h(4d) \log \varepsilon} \sum_{\substack{d' \mid d \\ d' \equiv 1, 5 \textrm{ or } d \pmod{8}}} Z_{d'}(2) Z_{d/d'}(2), \\  c_2 (\sqrt{d}) &=\frac{2 d^2}{\pi^8 h(4d) \log \varepsilon} \sum_{\substack{d' \mid d \\ d' \equiv 1, 5 \textrm{ or } d \pmod{8}}} Z_{d'}(4) Z_{d/d'}(4) . \end{split} \]
\end{enumerate}
\end{lem}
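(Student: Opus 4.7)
The plan is to derive all three cases from a single master identity by applying the genus formula \eqref{genusformula} to the principal form and its negative, then combining via the expression for $c_1,c_2$ extracted from the proof of Theorem \ref{c1c2theorem}, namely
\begin{equation*}
c_1(\alpha) = \frac{D(\zeta(A,2)+\zeta(wA,2))}{4\pi^4\log\varepsilon}, \qquad c_2(\alpha) = \frac{D^2(\zeta(A,4)+\zeta(wA,4))}{8\pi^8\log\varepsilon},
\end{equation*}
together with $\zeta(A,s)+\zeta(wA,s)=\sum_{n\ge 1}(R_{Q_0}(n)+R_{-Q_0}(n))/n^s$, where $Q_0$ is the principal form associated to $A=\mathbb{Z}+\mathbb{Z}\alpha=\mathcal{O}$. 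In each case $Q_0$ is visible from the minimal polynomial: $Q_0=x^2-xy-\tfrac{d-1}{4}y^2$ of discriminant $D=d$ in case (i), and $Q_0=x^2-dy^2$ of discriminant $D=4d$ in cases (ii) and (iii). Under the hypothesis $h(D)=2^{\omega(D)-1}$ the class group is $(\mathbb{Z}/2\mathbb{Z})^{\omega(D)-1}$, so every genus consists of a single equivalence class and \eqref{genusformula} applies to each form individually.

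Since the principal form represents $1$, we have $\gamma_{p^*}(G_0)=\left(\tfrac{p^*}{1}\right)=1$ for every $p^*\in P(D)$, and \eqref{genusformula} gives $\sum_n R_{Q_0}(n)/n^s=2^{-\omega(D)}\sum_{D'\in F(D)}L(s,\chi_{D'})L(s,\chi_{D/D'})$. The main technical step is the companion claim that the genus $G_1$ of $-Q_0$ satisfies $\gamma_{p^*}(G_1)=\mathrm{sgn}(p^*)$ for every $p^*\in P(D)$. This is proved by choosing a positive integer $k$ represented by $-Q_0$ with $\gcd(k,p^*)=1$ and computing $\left(\tfrac{p^*}{k}\right)$ by quadratic reciprocity: writing $4k=dv^2-u^2$ with $u\equiv v\pmod 2$, one gets $4k\equiv -u^2\pmod{p}$ for every odd prime $p\mid d$, and a short congruence argument yields $k\equiv 3\pmod 4$ whenever $-4\in P(D)$ and a similar residue condition modulo $8$ whenever $\pm 8\in P(D)$; a case split on the residue of $p_i$ modulo $4$ then reduces $\left(\tfrac{p^*}{k}\right)$ to $\mathrm{sgn}(p^*)$ in every case. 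Applying \eqref{genusformula} to $-Q_0$ and adding then produces
\begin{equation*}
\zeta(A,s)+\zeta(wA,s)=\frac{1}{2^{\omega(D)-1}}\sum_{\substack{D'\in F(D) \\ D'>0}} L(s,\chi_{D'})L(s,\chi_{D/D'}),
\end{equation*}
since $1+\gamma_{D'}(G_1)=1+\mathrm{sgn}(D')$ vanishes for $D'<0$ and doubles for $D'>0$.

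The last step is to identify the positive $D'\in F(D)$ with the divisors $d'\mid d$ appearing in the lemma, via the bijection $d'\leftrightarrow D'=d'$ for $d'\equiv 1\pmod 4$ and $d'\leftrightarrow D'=4d'$ for $d'\equiv 2,3\pmod 4$, under which $L(s,\chi_{D'})=Z_{d'}(s)$. A direct enumeration of the subsets of $P(D)$ whose product is positive yields, in case (i), the divisors $d'\mid d$ with $d'\equiv 1\pmod 4$; in case (ii), all divisors $d'\mid d$ (one checks $d'\equiv 1\pmod 4$ gives $D'=d'$ and $d'\equiv 3\pmod 4$ gives $D'=4d'$, exhausting the positive elements of $F(4d)$); and in case (iii) with $d=2d_1$, the odd divisors $d'\equiv 1\pmod 4$ (coming from subsets of $P(D)$ not containing $\pm 8$) and the even divisors $d'=2d'_1$ with $d'_1\equiv d_1\pmod 4$ (coming from subsets containing $\pm 8$), which together amount to $d'\equiv 1,5\pmod 8$ or $d'\equiv d\pmod 8$. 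Substituting into the displayed formulas for $c_1$ and $c_2$ and using $2^{\omega(D)-1}=h(D)$ from the hypothesis yields each of the three formulas, where the factors of $4,\pi^4,8,\pi^8$ combine with $D\in\{d,4d\}$ to give the stated coefficients. The main obstacle is the genus character identity $\gamma_{p^*}(G_1)=\mathrm{sgn}(p^*)$, which requires the case analysis with $p^*\in\{-4,\pm 8,\mathrm{pd}(p_i)\}$ described above.
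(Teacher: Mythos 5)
Your proposal is correct and follows the same route as the paper: apply the genus formula \eqref{genusformula} to both $Q$ and $-Q$, show $\gamma_{p^*}(G_{-Q})=\mathrm{sign}(p^*)$ so that the $D'<0$ contributions cancel while the $D'>0$ ones double, and then translate the positive $D'\in F(D)$ into the stated divisor conditions on $d'\mid d$. The only minor technical difference is that the paper evaluates the genus character of $-Q$ at a concrete represented value (e.g.\ $k=(d-1)/4$ in case (i)) and uses periodicity of $\chi_{p^*}$ mod $d$ to get $\chi_{p^*}(-1)=\mathrm{sign}(p^*)$ immediately, whereas you write $4k=dv^2-u^2$ and invoke reciprocity with a case split on $p_i\bmod 4$; both work, but the paper's route avoids the case analysis.
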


\begin{proof} We work with
\[ \alpha = \left\{ \begin{array}{ll} \frac{1+\sqrt{d}}{2} & \textrm{if } d \equiv 1 \pmod{4}, \\ \sqrt{d} & \textrm{if } d \equiv 2 \textrm{ or } 3 \pmod{4}, \end{array} \right. \]
and thus with the principal form
\[ Q(x,y) = \left\{ \begin{array}{ll} x^2 - xy - \frac{d-1}{4} y^2 & \textrm{if } d \equiv 1 \pmod{4}, \\ x^2 - d y^2 & \textrm{if } d \equiv 2 \textrm{ or 3} \pmod{4}, \end{array} \right. \]
whose discriminant
\[ D = \left\{ \begin{array}{ll} d & \textrm{if } d \equiv 1 \pmod{4}, \\ 4d & \textrm{if } d \equiv 2 \textrm{ or } 3 \pmod{4} \end{array} \right. \]
is a fundamental discriminant. By assumption, $h(D)=2^{\omega(D)-1}$, so each genus has size 1.

Assume $d \equiv 1 \pmod{4}$, and let us prove (i). The unique way to write the discriminant as a product of pairwise coprime prime discriminants is
\[ D=d=\prod_{\substack{p \mid d \\ p \equiv 1 \pmod{4}}} p \prod_{\substack{p \mid d \\ p \equiv 3 \pmod{4}}} (-p),  \]
where the second product has an even number of factors. The form $Q$ represents $k=Q(1,0)=1$, hence the genus $G_1$ consisting of the equivalence class of $Q$ satisfies $\gamma_{p^*}(G_1)=(\frac{p^*}{1}) = 1$, and by multiplicativity, $\gamma_{D'}(G_1)=1$ for all $D' \in F(D)$. Similarly, the form $-Q$ represents $k=-Q(1,1)=(d-1)/4$, so the genus $G_2$ consisting of the equivalence class of $-Q$ satisfies
\[ \gamma_{p^*}(G_2) = \left( \frac{p^*}{(d-1)/4} \right) = \left(\frac{p^*}{d-1} \right) = \left( \frac{p^*}{-1} \right)=\mathrm{sign}(p^*) ,\]
where we used the fact that the Dirichlet character $\chi_{p^*}(n)=(\frac{p^*}{n})$ is periodic mod $d$, completely multiplicative, and $\chi_{p^*}(4)=1$. By multiplicativity, $\gamma_{D'}(G_2) = \mathrm{sign}(D')$ for all $D' \in F(D)$. An application of formula \eqref{genusformula} thus gives
\[ \begin{split} \sum_{n=1}^{\infty} \frac{R_Q(n) + R_{-Q}(n)}{n^s} &= \frac{1}{2 h(d)} \sum_{D' \in F(D)} \left( 1+\mathrm{sign}(D') \right) L(s,\chi_{D'}) L(s,\chi_{D/D'}) \\ &= \frac{1}{h(d)} \sum_{\substack{D' \in F(D) \\ D'>0}}L(s,\chi_{D'}) L(s,\chi_{D/D'}) . \end{split} \]
The positive elements of $F(D)$ are exactly those divisors of $D=d$ that are divisible by an even number of primes $p \equiv 3 \pmod {4}$, hence $\{ D' \in F(D) : D'>0 \} = \{ d' : d' \mid d, \, d' \equiv 1 \pmod{4} \}$. In particular, for any $\theta>1$,
\[ c(\alpha, \theta) = \frac{d^{\theta/2}}{\log \varepsilon} \sum_{n=1}^{\infty} \frac{R_Q (n) + R_{-Q}(n)}{n^{\theta}} = \frac{d^{\theta/2}}{h(d)\log \varepsilon} \sum_{\substack{d' \mid d \\ d' \equiv 1 \pmod{4}}} L(\theta ,\chi_{d'}) L(\theta ,\chi_{d/d'}) . \]
Here $L(\theta, \chi_{d'}) = Z_{d'}(\theta)$, and the claim follows from setting $\theta=2$ resp.\ $\theta=4$.

Assume now $d \equiv 3 \pmod{4}$, and let us prove (ii). The unique way to write the discriminant as a product of pairwise coprime prime discriminants is
\[ D=4d=(-4) \prod_{\substack{p \mid d \\ p \equiv 1 \pmod{4}}} p \prod_{\substack{p \mid d \\ p \equiv 3 \pmod{4}}} (-p),  \]
where the second product has an odd number of factors. We similarly deduce
\[ \sum_{n=1}^{\infty} \frac{R_Q(n) + R_{-Q}(n)}{n^s} = \frac{1}{h(4d)} \sum_{\substack{D' \in F(D) \\ D'>0}}L(s,\chi_{D'}) L(s,\chi_{D/D'}) . \]
The positive elements of $F(D)$ are exactly those divisors of $d$ that are divisible by an even number of primes $p \equiv 3 \pmod{4}$, and 4 times those divisors of $d$ that are divisible by an odd number of primes $p \equiv 3 \pmod{4}$. In particular,
\[ \{ D' \in F(D) : D'>0 \} = \{ d' : d' \mid d, \, d' \equiv 1 \pmod{4} \} \cup \{ 4d' : d' \mid d, \, d' \equiv 3 \pmod{4} \}, \]
hence for any $\theta>1$,
\[ \begin{split} c(\alpha, \theta) &= \frac{(4d)^{\theta/2}}{h(4d) \log \varepsilon} \Bigg( \sum_{\substack{d' \mid d \\ d' \equiv 1 \pmod{4}}} L(\theta, \chi_{d'}) L (\theta, \chi_{4d/d'}) +\sum_{\substack{d' \mid d \\ d' \equiv 3 \pmod{4}}} L(\theta, \chi_{4d'}) L (\theta, \chi_{d/d'}) \Bigg) \\ &= \frac{(4d)^{\theta/2}}{h(4d) \log \varepsilon} \Bigg( \sum_{\substack{d' \mid d \\ d' \equiv 1 \pmod{4}}} Z_{d'}(\theta) Z_{d/d'}(\theta) +\sum_{\substack{d' \mid d \\ d' \equiv 3 \pmod{4}}} Z_{d'}(\theta) Z_{d/d'}(\theta) \Bigg) \\ &=\frac{(4d)^{\theta/2}}{h(4d) \log \varepsilon} \sum_{d' \mid d} Z_{d'}(\theta) Z_{d/d'}(\theta) . \end{split} \]
The claim follows from setting $\theta=2$ resp.\ $\theta=4$.

Finally, assume $d$ is even, and let us prove (iii). The unique way to write the discriminant as a product of pairwise coprime prime discriminants is
\[ D=4d=(w8) \prod_{\substack{p \mid \frac{d}{2} \\ p \equiv 1 \pmod{4}}} p \prod_{\substack{p \mid \frac{d}{2} \\ p \equiv 3 \pmod{4}}} (-p) \quad \textrm{with} \quad w=\left\{ \begin{array}{ll} +1 & \textrm{if } \frac{d}{2} \equiv 1 \pmod{4}, \\ -1 & \textrm{if } \frac{d}{2} \equiv 3 \pmod{4} . \end{array} \right.  \]
We similarly deduce
\[ \sum_{n=1}^{\infty} \frac{R_Q(n) + R_{-Q}(n)}{n^s} = \frac{1}{h(4d)} \sum_{\substack{D' \in F(D) \\ D'>0}}L(s,\chi_{D'}) L(s,\chi_{D/D'}) . \]
The positive elements of $F(D)$ are exactly those divisors of $d/2$ that are divisible by an even number of primes $p \equiv 3 \pmod{4}$, and 8 times those divisors of $d/2$ that are divisible by an even (resp.\ odd) number of primes $p \equiv 3 \pmod{4}$ if $w=1$ (resp.\ $w=-1$). In particular,
\[ \{ D' \in F(D) : D'>0 \} = \left\{ d' : d' \mid \frac{d}{2}, \, d' \equiv 1 \pmod {4} \right\} \cup \left\{ 8d' : d' \mid \frac{d}{2}, \, d' \equiv \frac{d}{2} \pmod{4} \right\} , \]
hence for any $\theta>1$,
\[ \begin{split} c(\alpha, \theta) &= \frac{(4d)^{\theta/2}}{h(4d) \log \varepsilon} \Bigg( \sum_{\substack{d' \mid \frac{d}{2} \\ d' \equiv 1 \pmod{4}}} L(\theta, \chi_{d'}) L (\theta, \chi_{4d/d'}) +\sum_{\substack{d' \mid \frac{d}{2} \\ d' \equiv \frac{d}{2} \pmod{4}}} L(\theta, \chi_{8d'}) L (\theta, \chi_{d/(2d')}) \Bigg) \\ &= \frac{(4d)^{\theta/2}}{h(4d) \log \varepsilon} \Bigg( \sum_{\substack{d' \mid \frac{d}{2} \\ d' \equiv 1 \pmod{4}}} Z_{d'}(\theta) Z_{d/d'}(\theta) +\sum_{\substack{d' \mid \frac{d}{2} \\ d' \equiv \frac{d}{2} \pmod{4}}} Z_{2d'}(\theta) Z_{d/(2d')}(\theta) \Bigg) \\ &= \frac{(4d)^{\theta/2}}{h(4d) \log \varepsilon} \Bigg( \sum_{\substack{d' \mid d \\ d' \equiv 1 \pmod{4}}} Z_{d'}(\theta) Z_{d/d'}(\theta) +\sum_{\substack{d' \mid d \\ d' \equiv d \pmod{8}}} Z_{d'}(\theta) Z_{d/d'}(\theta) \Bigg) . \end{split} \]
The claim follows from setting $\theta=2$ resp.\ $\theta=4$.
\end{proof}

As an example, consider $d=30$. The discriminant $D=120$ has class number $h(D)=4$. The smallest positive solution of Pell's equation $t^2-120u^2=4$ is $(t_0,u_0)=(22,2)$, hence $\varepsilon=11+2\sqrt{30}$. The set of divisors of $d$ is $\{ 1,2,3,5,6,10,15,30 \}$, but only $\{ 1,5,6,30 \}$ of them are congruent to 1, 5 or $d$ modulo 8. Lemma \ref{genuslemma} thus gives
\[ \begin{split} c_1 (\sqrt{30}) &=\frac{30}{\pi^4 4 \log (11+2\sqrt{30})} 2 \left( \zeta_{\mathbb{Q}(\sqrt{30})}(2) + \frac{\zeta_{\mathbb{Q}(\sqrt{5})}(2)}{\zeta(2)} \cdot \frac{\zeta_{\mathbb{Q}(\sqrt{6})}(2)}{\zeta(2)} \right) , \\  c_2 (\sqrt{30}) &=\frac{2\cdot 30^2}{\pi^8 4 \log (11+2\sqrt{30})} 2 \left( \zeta_{\mathbb{Q}(\sqrt{30})}(4) + \frac{\zeta_{\mathbb{Q}(\sqrt{5})}(4)}{\zeta(4)} \cdot \frac{\zeta_{\mathbb{Q}(\sqrt{6})}(4)}{\zeta(4)} \right) . \end{split} \]
An application of Lemma \ref{dedekindat24lemma} leads to (cf.\ Table \ref{zetatable})
\[ \zeta_{\mathbb{Q}(\sqrt{5})}(2) = \frac{2 \sqrt{5} \pi^4}{375}, \qquad \zeta_{\mathbb{Q}(\sqrt{6})}(2) = \frac{\sqrt{6}\pi^4}{144}, \qquad \zeta_{\mathbb{Q}(\sqrt{30})}(2) = \frac{17\sqrt{30}\pi^4}{5400} \]
and
\[ \zeta_{\mathbb{Q}(\sqrt{5})}(4) = \frac{4 \sqrt{5} \pi^8}{84375}, \qquad \zeta_{\mathbb{Q}(\sqrt{6})}(4) = \frac{29 \sqrt{6}\pi^8}{622080}, \qquad \zeta_{\mathbb{Q}(\sqrt{30})}(4) = \frac{36451\sqrt{30}\pi^8}{1749600000} , \]
and we finally deduce the explicit values
\[ c_1(\sqrt{30}) = \frac{121 \sqrt{30}}{1800 \log (11+2\sqrt{30})} \quad \textrm{and} \quad c_2(\sqrt{30}) = \frac{28224541\sqrt{30}}{1944000 \log (11+2\sqrt{30})} . \]

\subsubsection{General quadratic irrationals}\label{generalquadraticsection}

In some special cases when the class group is of a suitable form, Lemmas \ref{classnumber12lemma} and \ref{genuslemma} give the explicit value of the constants $c_1(\alpha)$ and $c_2(\alpha)$ in terms of special values of the Dedekind zeta function. The main advantage is that these special values are easily computable by Lemma \ref{dedekindat24lemma}.

We now consider the case of a general quadratic irrational $\alpha$, without any assumption on the class group. Theorem \ref{c1c2theorem} expresses $c_1(\alpha)$ and $c_2(\alpha)$ in terms of special values of zeta functions at $s=-1$ and $s=-3$. Following the general method of Zagier \cite{ZA2} for finding the values of zeta functions at negative integers leads to the explicit values of $c_1(\alpha)$ and $c_2(\alpha)$ for an arbitrary $\alpha$.

We now recall the method of Zagier. With the notation in \ref{quadraticfieldssection}, let $A=\mathbb{Z}\xi_1 + \mathbb{Z}\xi_2$ be a $\mathbb{Z}$-module of rank 2 with an oriented basis, that is, $\xi_1 \overline{\xi_2} - \overline{\xi_1} \xi_2>0$, such that the order $\{ \xi \in \mathbb{Q}(\sqrt{d}) : \xi A \subseteq A \}$ has discriminant $D=f^2 D_0$. The quadratic irrational $\xi_1/\xi_2$ has a unique backward continued fraction expansion
\[ \frac{\xi_1}{\xi_2} = \llbracket b_0, b_1, b_2, \ldots \rrbracket = b_0 - \cfrac{1}{b_1-\cfrac{1}{b_2-\cdots}} \]
with an integer $b_0$ and integers $b_i \ge 2$, $i \ge 1$, which is eventually periodic with some minimal period length $r \ge 1$. In particular, $b_{i+r}=b_i$ for all $i \ge i_0$ with some $i_0$. Consider the purely periodic backward continued fractions
\[ w_j = \llbracket b_{i_0+j}, b_{i_0+j+1}, \ldots \rrbracket, \qquad 0 \le j \le r-1 . \]
It turns out that the set of all $\mathbb{Z}$-modules $\mathbb{Z}w+\mathbb{Z}$ that are equivalent to $A$ and are reduced in the sense that $w>1>\overline{w}>0$, is exactly $\mathbb{Z}w_j + \mathbb{Z}$, $0 \le j \le r-1$. Let
\[ Q_j (x,y) = \frac{\sqrt{D}}{w_j - \overline{w_j}} (x w_j +y)(x \overline{w_j}+y), \qquad 0 \le j \le r-1 \]
be the corresponding binary quadratic forms. Then for any integer $k \ge 1$,
\begin{equation}\label{zagierformula}
\zeta (A,-k)= \sum_{j=0}^{r-1} \sum_{i=0}^{2k} d_{i,k}^{(j)} \left( \frac{B_{2k+2}}{2k+2} \cdot \frac{b_{i_0+j}^{2k-i+1}}{2k-i+1} - \frac{B_{i+1}}{i+1} \cdot \frac{B_{2k-i+1}}{2k-i+1} \right) ,
\end{equation}
where $B_i$ is the sequence of Bernoulli numbers, and $d_{i,k}^{(j)}$ are the coefficients in
\[ Q_j (x,y)^k = \sum_{i=0}^{2k} (-1)^i d_{i,k}^{(j)} x^i y^{2k-i}, \qquad 0 \le j \le r-1 . \]

As a final example, let us choose a generic quadratic irrational, say, $\alpha = \frac{19+3\sqrt{69}}{26}$, and let us find the values of $c_1(\alpha)$ and $c_2(\alpha)$ using the method of Zagier. The minimal polynomial $13x^2 - 19x -5$ has discriminant $D=621$, which we write in the form $D=f^2 D_0$ with $f=3$ and the fundamental discriminant $D_0=69$. In particular, $\alpha \in \mathbb{Q}(\sqrt{d})$ with $d=69$. The smallest positive solution of Pell's equation $t^2-621u^2=4$ is $(t_0,u_0)=(25,1)$, hence $\varepsilon = (25+3\sqrt{69})/2$. We mention that the class number is $h(D)=6$.

We write the $\mathbb{Z}$-module $A=\mathbb{Z}+\mathbb{Z}\alpha$ in the form $A=\mathbb{Z}\alpha + \mathbb{Z}$ with the oriented basis $\alpha, 1$. The backward continued fraction expansion of $\alpha/1$ turns out to be $\alpha = \llbracket 2, \overline{4,2,2,2,3} \rrbracket$, where the overline denotes the period. The purely periodic backward continued fraction expansions with the corresponding binary quadratic forms are
\begin{align*}
w_0 &= \llbracket \overline{4,2,2,2,3} \rrbracket = \frac{11+\sqrt{69}}{6}, & Q_0(x,y) &= 13x^2 + 33xy + 9y^2, \\
w_1 &= \llbracket \overline{2,2,2,3,4} \rrbracket = \frac{39+3\sqrt{69}}{50}, & Q_1(x,y) &= 9x^2 + 39xy + 25y^2, \\
w_2 &= \llbracket \overline{2,2,3,4,2} \rrbracket = \frac{61+3\sqrt{69}}{62}, & Q_2(x,y) &= 25x^2 + 61xy + 31y^2, \\
w_3 &= \llbracket \overline{2,3,4,2,2} \rrbracket = \frac{21+\sqrt{69}}{18}, & Q_3(x,y) &= 31x^2 + 63xy + 27y^2, \\
w_4 &= \llbracket \overline{3,4,2,2,2} \rrbracket = \frac{45+3\sqrt{69}}{26}, & Q_4(x,y) &= 27x^2 + 45xy + 13y^2.
\end{align*}
The first 8 Bernoulli numbers are
\[ B_1 =-\frac{1}{2}, \qquad B_2 = \frac{1}{6}, \qquad B_3 = B_5 = B_7 =0, \qquad B_4 = B_8 = - \frac{1}{30} , \qquad B_6 = \frac{1}{42} . \]
Setting $k=1$ in \eqref{zagierformula}, the terms $j=0,1,2,3,4$ are
\[ \begin{split} 9 \left( - \frac{1}{120} \cdot \frac{4^3}{3} \right) - 33 \left( - \frac{1}{120} \cdot \frac{4^2}{2} - \frac{1}{144} \right) + 13 \left( - \frac{1}{120} \cdot \frac{4^1}{1} \right) &= \frac{19}{48}, \\ 25 \left( - \frac{1}{120} \cdot \frac{2^3}{3} \right) - 39 \left( - \frac{1}{120} \cdot \frac{2^2}{2} - \frac{1}{144} \right) + 9 \left( - \frac{1}{120} \cdot \frac{2^1}{1} \right) &=\frac{31}{144}, \\ 31 \left( - \frac{1}{120} \cdot \frac{2^3}{3} \right) - 61 \left( - \frac{1}{120} \cdot \frac{2^2}{2} - \frac{1}{144} \right) + 25 \left( - \frac{1}{120} \cdot \frac{2^1}{1} \right) &= \frac{241}{720}, \\ 27 \left( - \frac{1}{120} \cdot \frac{2^3}{3} \right) - 63 \left( - \frac{1}{120} \cdot \frac{2^2}{2} - \frac{1}{144} \right) + 31 \left( - \frac{1}{120} \cdot \frac{2^1}{1} \right) &= \frac{89}{240}, \\ 13 \left( - \frac{1}{120} \cdot \frac{3^3}{3} \right) - 45 \left( - \frac{1}{120} \cdot \frac{3^2}{2} - \frac{1}{144} \right) + 27 \left( - \frac{1}{120} \cdot \frac{3^1}{1} \right) &= \frac{7}{20}, \end{split} \]
respectively, hence
\[ \zeta (A,-1) = \frac{19}{48} + \frac{31}{144} + \frac{241}{720} + \frac{89}{240} + \frac{7}{20} = \frac{5}{3} . \]
Next, set $k=3$ in \eqref{zagierformula}. Using the coefficients in the expansions of $Q_j(x,y)^3$, $j=0,1,2,3,4$ leads to the $j=0,1,2,3,4$ terms in \eqref{zagierformula}, and we obtain
\[ \begin{split} \zeta (A,-3) &= \frac{305161}{11200} + \frac{200087}{11200} + \frac{3846131}{33600} +\frac{4287701}{33600} + \frac{63901}{1400} = \frac{1997}{6} . \end{split} \]
Theorem \ref{c1c2theorem} thus gives the explicit values
\[ \begin{split} c_1 \left( \frac{19+3\sqrt{69}}{26} \right) &= \frac{2 \cdot \frac{5}{3}}{(621)^{1/2} \log \frac{25+3 \sqrt{69}}{2}} = \frac{10 \sqrt{69}}{621 \log \frac{25+3\sqrt{69}}{2}}, \\ c_2 \left( \frac{19+3\sqrt{69}}{26} \right) &= \frac{4 \cdot \frac{1997}{6}}{9(621)^{3/2} \log \frac{25+3\sqrt{69}}{2}} = \frac{3994 \sqrt{69}}{3470769 \log \frac{25+3\sqrt{69}}{2}} . \end{split} \]

\end{document}